\newcommand{\stkout}[1]{\ifmmode\text{\sout{\ensuremath{#1}}}\else\sout{#1}\fi}
 \newcommand{\grad}{\triangledown}
\newcommand{\I}{\mathcal I}
\newcommand{\xbar}{\bar{x}}
\newcommand{\ybar}{\bar{y}}
\newcommand{\Rn}{\mathbb{R}^n}
\theoremstyle{plain}
\newtheorem{thm}{Theorem}[section]
\theoremstyle{plain}
\newtheorem{lem}[thm]{Lemma}
\newtheorem{cor}[thm]{Corollary}
\newtheorem{defi}[thm]{Definition}
\newtheorem{rem}{Remark}[section]
\theoremstyle{definition}
\newtheorem*{maintheorem*}{Main Theorem}
\newtheorem*{maincorollary*}{Main Corollary}
\newcommand{\norm}[1]{\ensuremath{\left\|#1\right\|}}
\newcommand{\abs}[1]{\ensuremath{\left|#1\right|}}
\newcommand{\cC}{\ensuremath{\mathcal{C}}}
\newcommand{\cI}{\ensuremath{\mathcal{I}}}
\newcommand{\dist}{{\rm dist}}
\newcommand{\R}{\ensuremath{\mathbb{R}}}
\newcommand{\Rd}{\ensuremath{\R^d}}
\newcommand{\dz}{\ensuremath{\, dz}}
\numberwithin{equation}{section} \allowdisplaybreaks
\title[Nonlocal Liouville theorems]{Nonlocal Liouville theorems with gradient nonlinearity}
\begin{document}

\author{Anup Biswas}

\address{Indian Institute of Science Education and Research-Pune, Dr.\ Homi Bhabha Road, Pashan, Pune 411008. INDIA Email:
{\tt anup@iiserpune.ac.in}}

\author{Alexander Quaas}

\address{
Departamento de Matem\'atica,
Universidad T\'ecnica Federico Santa Mar\'{i}a
Casilla V-110, Avda.
Espa\~{n}a, 1680 – Valpara\'{i}so, CHILE.
Email: {\tt alexander.quaas@usm.cl}}

\author{Erwin Topp}

\address{
Instituto de Matem\'aticas, Universidade Federal do Rio de Janeiro, Rio de Janeiro - RJ, 21941-909, BRAZIL; and Departamento de Matem\'atica y C.C., Universidad de Santiago de Chile, Casilla 307, Santiago, CHILE.
Email: {\tt etopp@im.ufrj.br; erwin.topp@usach.cl.}
}

\begin{abstract}
In this article we consider a large family of nonlinear nonlocal equations involving gradient nonlinearity and provide a
unified approach, based on the Ishii-Lions type technique, to establish Liouville properties of the solutions. We also answer
an open problem raised by  \cite{CG21}. Some applications to regularity issues are also studied.
\end{abstract}

\keywords{Lipschitz regularity, Bernstein estimate, nonexistence, regularity, Hamilton-Jacobi equations}
\subjclass[2010]{Primary: 35J60, 35B53}

\maketitle

\section{Introduction}
The main goal of this article is to study Liouville properties of viscosity solutions to nonlinear nonlocal equations of the form
$$-\cI u + H(u, \grad u)=0\quad \text{in}\; \Rn,$$
where the Hamiltonian $H(r, p)$ are models of the form 
\begin{align*}
& \mbox{(I)}  \ |p|^m \quad \mbox{for} \ m>1, \\
& \mbox{(II)}  \ -r^q|p|^m, \\
& \mbox{(III)}  \ r^q|p|^m,
\end{align*} 
and $\cI$ is a fractional Pucci type nonlinear
operator of order $2s, s\in (0, 1)$. 

The Liouville property of the solutions to various nonlinear partial differential equations has been a central topic of research not only because of its own importance, but also 
due to its applications in the regularity theory (see \cite{BK23,RS-Duke,RS16,Ser15}), a priori bounds \cite{BDGQ} etc. This particular article is inspired from
the recent survey of Cirant and Goffi \cite{CG21} where the authors provide a comprehensive account on  the development of various Liouville theorems for classical (or local) elliptic equations. The nonlocal analogue of most of these Liouville theorems remained open because of several difficulties stemming from the nonlocal nature of the equations and unavailability of appropriate gradient bounds, also known as Bernstein estimate.
In this article we propose a unified approach to prove  Liouville properties for a large class of nonlinear nonlocal operators including gradient
nonlinearity of type I, II and III above.

For $H=0$, the above Liouville property is quite well studied and have also been used to establish interior regularity of solutions \cite{Fall16,RS-Duke,RS16,Ser15}. The case $H(u, \grad u)=-u^q$ is known as the Lane-Emden equation and has been extensively studied 
in literature, see \cite{Biswas,CL09,QX16,XY13} and references therein. Recently, Barrios and Del Pezzo \cite{BD20} study Liouville property 
of positive solutions to the equations $(-\Delta)^s u + |\grad u|^m=\lambda u^q$ in exterior domains. The authors rely on the maximum principles 
to obtain an appropriate decay estimate of the solutions at infinity which helps them to establish the required results. This particular idea in the local case goes back to the work of Alarc\'{o}n et.\ al. \cite{AGQ}, see also the earlier results by Cutr\`i and Leoni \cite{CL00} and Armstrong and Sirakov in \cite{AS}; and in the context of nonlocal equation see Felmer and the second author \cite{FQ} and the references therein. We should also mention the works of Alibaud et.\ al.\cite{ATEJ}, Berger and Schilling \cite{BS22}, Fall and Weth \cite{FW16} which study Liouville properties for a general linear integro-differential operator with constant coefficients. In fact, \cite{ATEJ, BS22} provides a necessary and sufficient condition for the Liouville property to hold in the class of bounded solutions.  We also remark on higher order nonlocal Liouville theorems that can be found in \cite{DSV19}, where the authors introduce a notion of fractional Laplacian evaluated at functions with arbitrary polynomial growth at infinity.

To understand the difficulty in treating problems considered in this article, let us focus on the model I. That is, the Hamiltonian is of the form
$H(u, \grad u)=|\grad u|^m$. For the classical second-order Laplacian, that is $\cI = \Delta$, the Liouville property is obtained from the gradient estimate.
More precisely, since $v(x):=R^{-\frac{m-2}{m-1}}u(Rx), R>0,$ satisfies the same equation, from the gradient estimate it follows that
$$|\grad u(x)|\leq C R^{-\frac{1}{m-1}}\quad x\in B_{R/2}.$$
Letting $R\to\infty$, we see that $u={\rm const}$. This idea dates back to the work of Peletier and Serrin \cite{PS78}, Lions \cite{Lions}.
Later, a similar gradient estimate is used to obtain the Liouville property for model II, see Bidaut-V\'eron, Garc\'ia-Huidobro and V\'eron \cite{BGV}, 
Filippucci, Pucci and Souplet \cite{FPS20}. Therefore, the gradient estimate plays a key role in establishing the Liouville property. 
Since a solution to the stationary equation also solves the associated parabolic equation, the Liouville property can be established by leveraging the regularizing effect of the parabolic model. In this context, we refer to the work of Porretta and Priola  \cite{PP13}, 
who used an Ishii-Lions type argument to establish global Lipschitz regularity for solutions to the parabolic equations, and subsequently applied this to derive a Liouville result for the stationary equations. See also \cite{CGM24} where the authors employed semiconcavity estimates in a similar manner to establish a Liouville theorem for first-order parabolic Hamilton-Jacobi equations.
Surprisingly,
this Bernstein type gradient estimate remained a challenging problem for the  nonlocal operators (see \cite{CDV,Goffi}). 
Using the Bernstein approach, Cabr\'e, Dipierro and Valdinoci \cite{CDV} derived a gradient estimate for solutions to a class of Pucci-type operators, which does not involve gradient nonlinearity. However, the kernels in \cite{CDV} have to be regular enough for their Bernstein method to be applicable.
Very recently, the first and third author in~\cite{BT23}
obtained a gradient estimate by employing an Ishii-Lions type arguemnt that, although useful, appears to be far from optimal and is only applicable to model I.  When combined with an appropriate scaling, the gradient bound in~\cite{BT23} yields a Liouville property for the solution to model I (see Remark~\ref{R-GL} below). 
As observed, the presence of a nonlocal term in the gradient estimate presents a significant challenge when trying to adapt the scaling arguments from local settings to nonlocal operators. At this point, let us also mention the work of 
Constantin and Vicol \cite{CV12}, who used a Bernstein-type argument to establish well-posedness for SQG equations. Their approach relies on a nonlinear maximum principle, which is applicable to bounded functions with vanishing gradients at infinity.

 Our method in this article is based on Ishii-Lions type argument which was introduced by
Ishii and Lions in \cite{IL90} to obtain H\"{o}lder regularity of viscosity solutions for degenerate elliptic second-order equations.
In the same second-order setting, Capuzzo-Dolcetta, Leoni, and Porretta [19] obtained interior Lipschitz estimates by revisiting Bernstein's technique, incorporating elements of the Ishii-Lions method (see also [5]). In the nonlocal setting, the Ishii-Lions argument was implemented by Barles et al. [6] to establish Hölder and Lipschitz estimates for a broad class of nonlinear nonlocal equations. Typically, this method involves a doubling variables procedure and the use of a penalization function, which serves as a test function for the solution. An appropriate selection of a large Lipschitz constant for this test function, along with the ellipticity of the equation, leads to the desired result.

We apply a similar Ishii-Lions method but with a penalization function with a small H\"{o}lder seminorm (among other technicalities involved by the nonlocal nature of the equation) and then, making use of the ellipticity, we establish the results. Interestingly, this technique works for a large class of nonlinear equations and, therefore, produces 
a unified approach in obtaining the Liouville property. As it can be seen from the proofs below, a similar technique can also be applied to nonlocal operators with kernels of variable order and more general integro-differential
operators with non-degenerate second order elliptic term. In the later part of this article, we also apply Liouville theorems to establish certain regularity results. For instance, in Theorem~\ref{T3.1} we treat a nonlinear
equation with critical diffusion with respect to the gradient (namely, fractional order $s=\frac{1}{2}$) and show that the viscosity solutions are $C^\gamma$ for any $\gamma\in (0,1)$. This extends the result of Schwab and Silvestre \cite{SS16},
who obtained $C^\alpha$ regularity for some $\alpha \in (0, 1)$.

We finish this introduction by mentioning that the Ishii-Lions method we employ here does not seem to be effective when applied to inequalities.
See for instance, the proof of Lemma~2.4. In contrast, such Liouville properties are well-established for both the classical Laplacian and the
$p$-Laplacian in the case of  inequalities. In this context, we can refer to the integral method, developed by several researchers  (cf.\ \cite{CDM08,CM97,MP04,RS01}) 
which has been successfully used to establish the Liouville property for inequalities. However, similar techniques for nonlocal operators, particularly those with gradient nonlinearity, remain largely unexplored.

The rest of the article is organized as follows: in the next Section we introduce our model and state all our main results on Liouville property. Section~\ref{S-proof} is devoted to the proof of these
results. Finally, in Section~\ref{S-appl} we provide two applications of our Liouville theorems to obtain some regularity results.

\subsection{Setting and main results}
Given ellipticity constants $0 < \lambda \leq \Lambda$, we denote by $\mathcal{K}_{s} = \mathcal K_s(\lambda, \Lambda)$ the family of nonnegative, measurable kernels $K : \R^d \setminus \{ 0 \} \to \R$ for which the following condition holds
\begin{equation}\label{elliptic}
\frac{\lambda}{|z|^{n + 2s}} \leq K(z) \leq \frac{\Lambda}{|z|^{n + 2s}}, \quad z \neq 0.
\end{equation}
Note that we do not assume $K$ to be symmetric with respect to $0$.

For each $K \in \mathcal K_s$, we denote
\begin{equation*}
L_K u(x) = \int_{\R^d} [u(x + z) - u(x) - 1_{\{2s\in [1, 2)\}}1_{B}(z) \nabla u(x) \cdot z] K(z)dz,
\end{equation*}
which is well-defined for adequately regular functions $u: \R^n \to \R$.  Here $B$ denotes the unit ball in $\Rn$ around $0$. Let $\cI$
be a translation invariant, positively $1$-homogeneous, nonlocal operator which is elliptic with respect to the class $\mathcal{K}_s$, that is, 
\begin{align}\label{ellipticity}
\cI u_h(x)&=\cI u (x+h)\quad \text{for all}\; x, h\in\Rn,\quad \text{where}\; u_h:= u(\cdot+h),\nonumber
\\
\inf_{K\in \mathcal{K}_s} (L_K u(x)-L_Kv(x))&\leq \cI u(x)-\cI v(x) \leq \sup_{K\in\mathcal{K}_s} (L_K u(x)- L_K v(x))
\end{align}
whenever $u, v\in C^{1,1}(x)\cap L^1(\omega_s)$ where $\omega_s(z)=(1+|z|)^{-n-2s}$. Note that for $u\in C^{1,1}(x)\cap L^1(\omega_s)$ and
$v\in C^{1,1}(y)\cap L^1(\omega_s)$, letting $\tilde{v}=v(\cdot + y-x)$, we also have from \eqref{ellipticity} that
\begin{equation}\label{ellip1}
\inf_{K\in \mathcal{K}_s} (L_K u(x)-L_Kv(y))\leq \cI u(x)-\cI v(y) \leq \sup_{K\in\mathcal{K}_s} (L_K u(x)- L_K v(y)).
\end{equation}

We also denote by $\mathcal{K}_{\rm sym} = \mathcal K_{\rm sym}(\lambda, \Lambda)$ the subclass of kernels in $\mathcal K_s$ which are symmetric, that is, $K(z) = K(-z)$ for all $z \neq 0$. Some of our results would use 
a translation invariant, positively $1$-homogeneous, nonlocal operator 
$\cI_{\rm sym}$ which is elliptic with respect to the class $\mathcal{K}_{\rm sym}$. Our results below are developed for viscosity solutions and are therefore also applicable to classical solutions.

\smallskip

Now we are ready to state our first main result.
\begin{thm}\label{T1.1}
Let $s\in (\frac{1}{2}, 1)$ and $H:\Rn\to\R$ be a continuous function satisfying the following:
for every $\varepsilon, L>0$ there exists $C=C(\varepsilon, L)$
satisfying
\begin{equation}\label{ET1.1A}
|H(p)-H(q)|\leq C\, |p-q|
\end{equation}
for all $\varepsilon \leq |p|, |q|\leq L$. Consider a viscosity solution $u$ to
\begin{equation}\label{ET1.1B}
-\cI u + H(\grad u)=0\quad \text{in}\; \Rn.
\end{equation}
If for some $\gamma\in [0, \frac{1}{2s})$ and $M>0$ we have 
\begin{equation}\label{growthu}
|u(x)|\leq M(1+|x|)^\gamma \quad x \in \Rn,
\end{equation}
then $u$ is necessarily a constant. 

\smallskip

Furthermore, for $s\in (0, \frac{1}{2}]$ the same conclusion holds, provided $\gamma\in [0, 2s)$ and one of the following holds:
\begin{itemize}
    \item[(i)] $u$ is  uniformly continuous in $\Rn$ and $H$ is locally Lipschitz in the sense that 
    for every $L>0$ there exists $C=C( L)$
satisfying
\begin{equation}\label{ET1.1A1}
|H(p)-H(q)|\leq C\, |p-q|\quad \text{for all}\; |p|, |q|\leq L.
\end{equation}
    \item[(ii)] $H$ is globally Lipschitz.
\end{itemize}
\end{thm}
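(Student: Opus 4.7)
The strategy is the Ishii--Lions doubling-variable technique for nonlocal viscosity equations developed in \cite{Barles-12}, but implemented in \emph{large balls} with a \emph{small} Hölder constant so that the Hamiltonian contribution is subdominant in the final balance. Arguing by contradiction, assume $u$ is not constant and fix $\bar x, \bar y$ with $u(\bar x) > u(\bar y)$. For parameters $L, \mu > 0$ and an exponent $\alpha \in (0, 1) \cap (0, 2s)$ to be chosen later, consider
$$\Phi(x, y) := u(x) - u(y) - L|x - y|^\alpha - \mu \Theta(x) - \mu \Theta(y),$$
with $\Theta(z) = (1 + |z|^2)^{\gamma'/2}$ for some $\gamma' \in (\gamma, 2s)$. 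The growth hypothesis \eqref{growthu} forces $\Phi \to -\infty$ at infinity, so $\Phi$ attains its supremum at some $(\hat x, \hat y)$. Choosing $L$ small enough that $\Phi(\bar x, \bar y) > 0$ makes the supremum positive, so $\hat x \neq \hat y$, and combining $\Phi(\hat x, \hat y) > 0$ with \eqref{growthu} yields the structural bounds $|\hat x|, |\hat y| \lesssim \mu^{-1/(\gamma' - \gamma)}$ and $|\hat x - \hat y| \lesssim L^{-1/\alpha}\max(|\hat x|, |\hat y|)^{\gamma/\alpha}$.

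At the maximum point I apply the nonlocal Jensen--Ishii lemma to produce viscosity sub/supersolution inequalities for $u$ at $\hat x$ and $\hat y$ with common gradient $p := L\alpha |\hat x - \hat y|^{\alpha - 2}(\hat x - \hat y)$ up to the penalization corrections $\pm \mu \nabla \Theta$. Subtracting them and using \eqref{ET1.1B} twice yields
$$\cI u(\hat x) - \cI u(\hat y) \;\geq\; H\bigl(p + \mu \nabla \Theta(\hat x)\bigr) - H\bigl(p - \mu \nabla \Theta(\hat y)\bigr) \;\geq\; -C\mu \bigl(|\hat x|^{\gamma' - 1} + |\hat y|^{\gamma' - 1}\bigr),$$
where the last bound uses the local Lipschitz hypothesis \eqref{ET1.1A}, valid because $|p| = L\alpha|\hat x - \hat y|^{\alpha - 1}$ lies in a compact subset of $(0, \infty)$ by the structural bounds. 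For $s \leq 1/2$ the hypotheses (i)/(ii) supply the analogous control, either via uniform continuity of $u$ (allowing a sup-convolution regularization that keeps $|p|$ away from $0$) or via global Lipschitz of $H$.

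The decisive ingredient is an opposing bound for the same quantity coming from the ellipticity of $\cI$. Using the strict radial concavity of $z \mapsto |z|^\alpha$ (its Hessian has radial eigenvalue of order $\alpha(\alpha - 1)|z|^{\alpha - 2} < 0$) combined with the lower bound $K(z) \geq \lambda/|z|^{n + 2s}$ from \eqref{elliptic}, the nonlocal Ishii--Lions decomposition of the increments produces
$$\cI u(\hat x) - \cI u(\hat y) \;\leq\; -c_0 \lambda L |\hat x - \hat y|^{\alpha - 2s} + \mathcal E(L, \mu),$$
where $\mathcal E$ collects subdominant contributions from tangential directions, from the kernel's tail integrated against the growing $u$, and from $\mu\Theta$. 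Combining with the previous display yields
$$c_0 \lambda L |\hat x - \hat y|^{\alpha - 2s} \;\leq\; C\mu\bigl(|\hat x|^{\gamma' - 1} + |\hat y|^{\gamma' - 1}\bigr) + \mathcal E(L, \mu).$$
Invoking the structural bounds, the left-hand side is $\gtrsim L |\hat x|^{\gamma(1 - 2s/\alpha)}$ while the right-hand side is $\lesssim |\hat x|^{\gamma - 1}$ (plus smaller errors), forcing $\gamma(1 - 2s/\alpha) \leq \gamma - 1$, i.e., $\alpha \geq 2s\gamma$. The admissibility $\gamma < 1/(2s)$ for $s > 1/2$ (respectively $\gamma < 2s$ for $s \leq 1/2$) is precisely what permits choosing $\alpha$ strictly inside $[2s\gamma, \min(1, 2s))$; sending then $\mu \to 0$ at fixed $L$ drives $|\hat x|\to\infty$ and the power-counting fails, producing the contradiction. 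The main technical obstacle is the rigorous control of the tail contribution in $\mathcal E$ — the integral of $u(\hat y + z) - u(\hat x + z)$ against the kernel on large $|z|$ — for which the parameters $\alpha$, $\gamma'$, $\mu$, $L$ must be tuned delicately so as to preserve the above power-counting.
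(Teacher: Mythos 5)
Your proposal follows the right blueprint (Ishii--Lions doubling in large scales, penalization to localize, subtraction of viscosity inequalities, then power-counting), but for the main case $s \in (1/2, 1)$ there is a genuine gap in the control of the test-function gradient, and it is exactly the point at which the paper's proof makes a different choice.

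You take $\varphi(t) = t^\alpha$ with $\alpha < 1$, so the gradient of the test function has magnitude $|p| = L\alpha|\hat x - \hat y|^{\alpha-1}$, and you assert it ``lies in a compact subset of $(0,\infty)$ by the structural bounds.'' That is not implied by what you have. Since $\alpha < 1$, $|p|$ is a \emph{decreasing} function of $|\hat x - \hat y|$, so you need $|\hat x - \hat y|$ bounded \emph{above} to keep $|p|$ away from $0$, and bounded \emph{below} to keep $|p|$ bounded. Your only bound is $|\hat x - \hat y| \lesssim L^{-1/\alpha}\max(|\hat x|,|\hat y|)^{\gamma/\alpha}$, and $|\hat x|, |\hat y|$ may grow without bound as $\mu \to 0$; you have no lower bound on $|\hat x - \hat y|$ at all (for $s>1/2$ you do not assume $u$ is uniformly continuous, so $u(\hat x) - u(\hat y) \geq \eta > 0$ does not force $|\hat x - \hat y|$ to stay away from $0$). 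Hypothesis \eqref{ET1.1A} only controls $|H(p) - H(q)|$ when $|p|, |q|$ are bounded above \emph{and} bounded away from $0$, so without a two-sided bound on $|p|$ the estimate on $I_H$ collapses. This is not a minor technicality: for Hamiltonians such as $H(p) = |p|^m$ with $m$ close to $1$ (or worse, $H$ merely continuous near $0$), the local Lipschitz constant degenerates precisely as $|p| \to 0$.

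The paper circumvents this by choosing, for $s > 1/2$, the almost-linear modulus $\varphi(t) = t - \tfrac{1}{4R^\theta}t^{1+\theta}$ on $[0, R]$, for which $\varphi'(t) \in [1/2, 1]$ uniformly. Consequently $\delta\varphi'(|\hat x - \hat y|)$ sits in $[\delta/2, \delta]$ \emph{regardless} of $|\hat x - \hat y|$, and since the localization $\tilde\chi_R$ vanishes on $B_{R/8}$ (and the maximizer is shown to lie in $B_{R/4}$) its gradient contribution tends to $0$ as $R \to \infty$, giving $|p| \in [\delta/8, 1 + 2\kappa M]$ for all large $R$. This two-sided control is exactly what feeds \eqref{ET1.1A} with $\varepsilon = \delta/8$. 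The pure-power test function $t^\beta$, $\beta < 1$, is reserved in the paper for $s \leq 1/2$, where either $u$'s uniform continuity (part (i)) delivers a lower bound on $|\hat x - \hat y|$ (hence an upper bound on $|p|$, and \eqref{ET1.1A1} needs no lower bound on $|p|$), or $H$ is globally Lipschitz (part (ii)) so no control on $|p|$ is needed. You would need to either switch to a nearly-linear test function as in the paper, or supply an a priori two-sided bound on $|\hat x - \hat y|$ uniform in $\mu$ -- neither of which your argument provides for $s > 1/2$.

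A secondary remark: your estimate $\mathcal{E}(L,\mu)$ from the global penalization $\mu\Theta$ with $\Theta(z)=(1+|z|^2)^{\gamma'/2}$, and the drift term $|\nabla\Theta(\hat x)| \sim |\hat x|^{\gamma'-1}$, require the same kind of careful tuning that the paper performs via the localized function $\tilde\chi_R$ (whose gradient and nonlocal evaluation carry explicit $R$-rates from Lemma~\ref{est-localization}); your ``plus smaller errors'' hides this, and the final $\alpha \geq 2s\gamma$ dichotomy is only tight if the suppressed $\mu$- and $L$-factors are tracked. But these are bookkeeping issues; the $|p|$ control is the substantive gap.
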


\begin{rem}
It is interesting to note that we do not require $H$ to be locally Lipschitz for \eqref{ET1.1A} to hold. Also, under the stated conditions
of Theorem~\ref{T1.1},  \eqref{ET1.1B} does not have any solution unless $H(0)=0$. Again, for $s>1/2$, we can not take the growth exponent 
$\gamma$ to be $1$. For instance, if we consider the equation
$$(-\Delta)^s u + |e\cdot \grad u|^m=0 \quad \text{in}\; \Rn, \; m>0$$
for some unit vector $e$, then $u(x)=e^\perp\cdot x$ is a non-zero solution for any unit vector $e^\perp$ satisfying $e^\perp\cdot e=0$.
\end{rem}

Theorem~\ref{T1.1} deals with a quite general Hamiltonian in the sense that we do not impose any sign or growth conditions on $H$.
If we impose a slightly restrictive condition on $H$ in Theorem~\ref{T1.1}, the Liouville property holds for a larger
class of functions, as stated in the following theorem
\begin{thm}\label{T1.2}
Let $s>1/2$ and $H:\Rn \to \R$ be a locally Lipschitz function
satisfying 
\begin{equation}\label{HT1.2}
|H(p)-H(q)|\leq C_H (|p|^{m-1} + |q|^{m-1})|p-q|\quad \text{for all}\; p, q\in\Rn,
\end{equation}
where $m>1$ and $C_H>0$. Let $\cI_{\rm sym}$ be a translation invariant, positively $1$-homogeneous operator which is 
elliptic with respect to the set of all symmetric kernels $\mathcal{K}_{\rm sym}$.
Consider the equation
$$
-\cI_{\rm sym} u + H(\grad u)=0 \quad \text{in}\; \Rn.
$$
Then there is no non-constant solution $u$
satisfying 
$$|u(x)|\leq M(1+|x|)^\gamma, \quad x\in\Rn$$
for some $\gamma< \max\{\frac{m-2s}{m-1}, \frac{1}{2s}\}$.
\end{thm}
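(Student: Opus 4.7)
The first move is a trivial reduction: whenever $\gamma<\tfrac{1}{2s}$, the refined Lipschitz estimate \eqref{HT1.2} makes $H$ locally Lipschitz in the sense of \eqref{ET1.1A}, so Theorem~\ref{T1.1} already delivers the conclusion. The genuinely new regime is therefore $\tfrac{1}{2s}\le \gamma<\tfrac{m-2s}{m-1}=:\alpha$, which is nonempty precisely when $m>2s+1$; throughout I fix an auxiliary exponent $\gamma'\in(\gamma,\alpha)$ and exploit the two features distinguishing the present setting from Theorem~\ref{T1.1}: the symmetry of $\cI_{\rm sym}$ and the polynomial-growth Lipschitz bound \eqref{HT1.2}.

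Arguing by contradiction, I assume $u$ is non-constant and, in the spirit of ``Ishii-Lions in a large ball with a small H\"older constant'' outlined in the introduction, apply the doubling-of-variables scheme of Theorem~\ref{T1.1} to
\[
\Phi(x,y) \;=\; u(x) - u(y) - L|x-y|^{\tau} - \varepsilon\bigl((1+|x|^2)^{\gamma'/2}+(1+|y|^2)^{\gamma'/2}\bigr),
\]
with a small exponent $\tau\in(0,2s-1)$ (available since $s>1/2$) and small $L,\varepsilon>0$. Because $\gamma'>\gamma$, the exterior penalty ensures $\sup\Phi$ is attained at some $(\bar x,\bar y)$, and non-constancy of $u$ guarantees $\sup\Phi>0$ once $L$ is small. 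From the viscosity inequalities at $\bar x,\bar y$, the subtraction estimate \eqref{ellip1}, and the symmetric representation
\[
L_K\phi(x)\;=\;\tfrac12\int\bigl(\phi(x+z)+\phi(x-z)-2\phi(x)\bigr)K(z)\,dz
\]
applied to the $L|\cdot|^{\tau}$ piece of the test function, one extracts a coercive nonlocal estimate of the form $\cI_{\rm sym} u(\bar x)-\cI_{\rm sym} u(\bar y)\le -cL|\bar x-\bar y|^{\tau-2s}+o_\varepsilon(1)$.

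The crucial new input is the Hamiltonian estimate. The first-order optimality conditions at $(\bar x,\bar y)$ pin $\nabla u(\bar x),\nabla u(\bar y)$ to a common magnitude of order $L\tau|\bar x-\bar y|^{\tau-1}$ with pairwise difference $O(\varepsilon\langle\bar x\rangle^{\gamma'-1})$, so \eqref{HT1.2} yields
\[
|H(\nabla u(\bar x))-H(\nabla u(\bar y))|\;\le\; C_H\bigl(L|\bar x-\bar y|^{\tau-1}\bigr)^{m-1}\cdot O\bigl(\varepsilon\langle\bar x\rangle^{\gamma'-1}\bigr).
\]
Combining with the coercive nonlocal bound produces
\[
cL|\bar x-\bar y|^{\tau-2s}\;\le\; C L^{m-1}|\bar x-\bar y|^{(\tau-1)(m-1)}\varepsilon\langle\bar x\rangle^{\gamma'-1}\;+\;o_\varepsilon(1).
\]
Using $\Phi(\bar x,\bar y)\ge 0$ together with $|u|\le M(1+|\cdot|)^\gamma$ to control both $|\bar x-\bar y|$ and $\langle\bar x\rangle$ in terms of $\varepsilon^{-1/(\gamma'-\gamma)}$, and then sending $\varepsilon\to 0$ with $L,\tau$ fixed small enough, the two sides of the displayed inequality become incompatible, producing the contradiction.

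The main obstacle I anticipate is the precise algebraic book-keeping between the exponents $\tau-2s$ and $(\tau-1)(m-1)$ which, combined with the growth constraint $\gamma<\gamma'<\alpha$, pins the critical threshold at $\alpha=(m-2s)/(m-1)$ and dictates admissible choices of $\tau,L,\varepsilon$. The symmetry hypothesis on $\cI_{\rm sym}$ is essential here: a nonsymmetric $\cI$ would leave a first-order tail contribution in $L_K$ incompatible with growth exponents $\gamma'\ge 1$, which is why the improved range of $\gamma$ in Theorem~\ref{T1.2} is available only in the symmetric setting.
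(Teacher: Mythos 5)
Your proposal shares the paper's first move (reducing to $\tfrac{1}{2s}\le\gamma<\tfrac{m-2s}{m-1}$, hence $m>2s+1$) and its broad Ishii--Lions doubling strategy, but the coupling function and parameter regime you choose are structurally different from the paper's, and the difference is fatal in the supercritical range.

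The paper does \emph{not} use a concave power $\varphi(t)=Lt^\tau$. It uses the near-linear $\varphi(t)=t-\tfrac{1}{4R^\theta}t^{1+\theta}$ of \eqref{defvarphi}, which has $\varphi'\in[\tfrac12,1]$ uniformly, and it \emph{couples} the small parameter to the localization scale via $R=R(\delta)=\delta^{-1/((1-\gamma)\epsilon_0)}$. Both choices are essential. With your $\varphi(t)=Lt^\tau$, $\tau\in(0,2s-1)$, the slope of the test function is $L\tau|\bar a|^{\tau-1}$, which blows up as $|\bar a|\to0$. Because $m>2s+1$, the Hamiltonian estimate \eqref{HT1.2} produces a term of order $(L|\bar a|^{\tau-1})^{m-1}\varepsilon\sim |\bar a|^{(\tau-1)(m-1)}$, and for small $\tau$ the exponent $(\tau-1)(m-1)\approx -(m-1)$ is strictly more negative than the nonlocal exponent $\tau-2s\approx -2s$. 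So as $|\bar a|\to 0$ the Hamiltonian side blows up \emph{faster} than the nonlocal coercivity, and the viscosity inequality can happily hold; you never exclude this regime. In Theorem~\ref{T1.1}(i) the paper obtains a lower bound on $|\bar a|$ through the assumed uniform continuity of $u$, and only then can it afford a concave power $\varphi(t)=t^\beta$; Theorem~\ref{T1.2} makes no such assumption, which is precisely why the paper switches to a $\varphi$ with bounded derivative and lets the concavity come from the tunable, $R$-dependent perturbation $t^{1+\theta}/(4R^\theta)$.

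There is a second, independent issue at the opposite end. Since you keep $L,\tau$ fixed and send $\varepsilon\to0$, your only control on $|\bar a|$ is the upper bound $|\bar a|\lesssim (R^\gamma/L)^{1/\tau}$ with $R\sim\varepsilon^{-1/(\gamma'-\gamma)}$. In the worst case the coercivity $cL|\bar a|^{\tau-2s}$ is therefore only bounded below by
\[
L^{2s/\tau}\,\varepsilon^{\frac{\gamma(2s-\tau)}{\tau(\gamma'-\gamma)}},
\]
whose exponent explodes as $\tau\to0$. This is far smaller than the localization error in your $o_\varepsilon(1)$, which is of order $\varepsilon$ (the operator applied to $\varepsilon(1+|x|^2)^{\gamma'/2}$ is $O(\varepsilon)$, not better). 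So for large $|\bar a|$ the $o_\varepsilon(1)$ tail swallows the coercivity and no contradiction follows. The paper avoids both pitfalls simultaneously: the near-linear $\varphi$ keeps the test gradient of order $\delta$ regardless of $|\bar a|$, and the coupling $R=\delta^{-1/((1-\gamma)\epsilon_0)}$ turns all three competing terms into pure powers of $\delta$ (the $\tau_1,\tau_2,\tau_3$ bookkeeping), so a comparison of exponents with $\epsilon_0$ close to $1$ and $\theta$ small closes the argument. A marginal remark: your heuristic that symmetry is needed because of ``growth exponents $\gamma'\ge1$'' is off --- in the relevant range $\gamma'<\tfrac{m-2s}{m-1}<1$; symmetry is needed because it improves the localization bound in Lemma~\ref{est-localization} from $R^{\alpha-1}$ to $R^{\alpha-2s}$, and the latter is what makes $\tau_3>\tau_1$ when $\gamma\ge\tfrac{1}{2s}$.
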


\begin{rem}\label{R-GL}
The bound of $\gamma$ in Theorem~\ref{T1.2} can be slightly improved for a particular class of equations.
Let $u$ be a viscosity solution to $(-\Delta)^s u + |\grad u|^m=0$ in $\Rn$ for $m\geq 2s>1$. Suppose that $|u|\lesssim (1+|x|^{\theta})$ for
$\theta=\frac{m-2s}{m-1}$. Letting $v_R(x)=R^{-\theta} v(Rx)$ for $R>1$, we see that $|v_R(x)|\leq \kappa (1+|x|^\theta)$, 
where $\kappa$ does not depend on $R$,
and $v_R$ also solves the same equation. Applying the gradient 
estimate of \cite[Theorem~2.1]{BT23} we get that
$$|v_R(x)-v_R(0)|\leq \kappa_1 \quad \text{for all}\; |x|\leq 1,$$
where $\kappa_1$ does not depend on $R$. Scaling back to $u$ this gives
$$ |u(x)-u(0)|\leq \kappa_1 R^{\theta-1} |x|\quad \text{for}\; |x|\leq R.$$
Letting $R\to\infty$, we find that $u$ is a constant.
\end{rem}

It is also interesting to ask whether there exists a solution that is comparable to $|x|^\gamma$ near infinity. Our next result gives an answer to this question.
\begin{thm}\label{T1.5}
Suppose that $u$ is a non-trivial solution to 
\begin{equation}\label{ET1.5A}
-\cI_{\rm sym} u + |\grad u|^m=0\quad \text{in}\; \Rn
\end{equation}
for $m>1$ and $s>\frac{1}{2}$, that satisfies
\begin{align}
|u(x)|&\leq M(1+|x|)^\gamma, \quad \text{for some $\gamma<2s$ and all $x \in \R^n$},\label{ET1.5B}
\\
\limsup_{|x|\to\infty} \frac{|u(x)|}{|x|^\gamma}&>0.\label{ET1.5C}
\end{align}
Then $\gamma\leq \max\{0, \frac{m-2s}{m-1}\}$. 
\end{thm}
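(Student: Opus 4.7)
The plan is to argue by contradiction via a blow-down (rescaling) argument. Suppose $\gamma > \max\{0, (m-2s)/(m-1)\}$; then in particular $\gamma > 0$ and $\mu := (m-1)\gamma - (m-2s) > 0$. By \eqref{ET1.5C} I extract a sequence $R_k \to \infty$ and unit vectors $y_k$ such that $|u(R_k y_k)| \geq c_0\, R_k^\gamma$ for some $c_0 > 0$, simply by taking $R_k = |x_k|$ and $y_k = x_k/R_k$ for a sequence $x_k$ realizing the $\limsup$. I then set
\[
u_k(x) := R_k^{-\gamma}\, u(R_k x).
\]
The translation invariance and positive $1$-homogeneity of $\cI_{\rm sym}$ imply that $u_k$ is a viscosity solution of
\[
-\cI_{\rm sym} u_k + R_k^{\mu}\,|\grad u_k|^m = 0 \qquad \text{in } \Rn.
\]
From \eqref{ET1.5B} one has $|u_k(x)| \leq M(R_k^{-1}+|x|)^\gamma$, so $\{u_k\}$ is uniformly bounded on compact sets. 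By construction $|u_k(y_k)| \geq c_0$, while $u_k(0) = R_k^{-\gamma} u(0) \to 0$ since $\gamma > 0$.

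Next I invoke the H\"older estimates developed earlier in the paper (the Ishii-Lions type arguments underlying the proofs of Theorems~\ref{T1.1} and \ref{T1.2}) on each ball $B_R \subset \Rn$ to obtain equicontinuity of $\{u_k\}$ on compact sets, uniformly in $k$. Arzel\`a-Ascoli together with a diagonal extraction gives, along a subsequence, $u_k \to u_\infty$ locally uniformly in $\Rn$ and $y_k \to y_0$ with $|y_0| = 1$.

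To identify $u_\infty$, I pass to the limit in the rescaled equation. If $\phi \in C^2$ touches $u_\infty$ strictly from above at $x_0$ with $\grad \phi(x_0) \neq 0$, the standard viscosity procedure produces local maxima $x_k \to x_0$ of $u_k-\phi$ at which the subsolution inequality (with $\phi$ used as the local test function and $u_k$ used in the nonlocal tail) reads
\[
R_k^\mu\, |\grad \phi(x_k)|^m \leq \cI_{\rm sym}\phi(x_k) + o(1),
\]
whose right-hand side stays bounded uniformly in $k$ because $|u_k(y)| \leq M(1+|y|)^\gamma$ with $\gamma < 2s$ makes the nonlocal tail uniformly integrable against any kernel in $\mathcal{K}_{\rm sym}$. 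Dividing by $R_k^\mu \to \infty$ and sending $k\to\infty$ forces $\grad \phi(x_0) = 0$; the symmetric argument from below then shows $u_\infty$ solves $|\grad u_\infty|^m = 0$ in the viscosity sense, hence $u_\infty$ is constant. This contradicts $u_\infty(0)=0$ and $|u_\infty(y_0)|\geq c_0 > 0$, completing the proof.

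The main technical obstacle is the equicontinuity step: one must verify that the H\"older estimates of Section~\ref{S-proof} remain uniform under the rescaling, despite the divergent coefficient $R_k^{\mu}$ in front of $|\grad u_k|^m$. Heuristically the large coefficient strengthens the coercivity of the Hamiltonian and should only improve the estimate, but the precise dependence of the Ishii-Lions constants on the size of the Hamiltonian coefficient must be tracked; the paper's strategy of running the Ishii-Lions argument in large balls with a small H\"older exponent is exactly what makes this tracking feasible.
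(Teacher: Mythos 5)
Your proposal is correct and follows essentially the same blow-down argument as the paper: normalize, rescale so that the Hamiltonian coefficient diverges, obtain uniform equicontinuity by an Ishii--Lions estimate, and pass to the limit to get a nontrivial viscosity solution of the eikonal equation $|\grad u_\infty|=0$, a contradiction. Two minor remarks: the rescaled equation is governed by a rescaled operator $\cI_k$ rather than $\cI_{\rm sym}$ itself (harmless, since $\cI_k$ remains elliptic with respect to $\mathcal K_{\rm sym}$), and the paper's equicontinuity step is not the ``large balls, small H\"older exponent'' trick used for the Liouville theorems but a fixed-ball Ishii--Lions argument (following an earlier reference) in which the divergent coefficient $\varrho_k\sim R_k^\mu$ multiplying $|p|^m$ supplies, for $m>1$, exactly the coercivity you anticipate and makes the resulting modulus uniform in $k$.
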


Now we start with our second model. In \cite{FPS20}, Filippucci, Pucci and Souplet consider the equation $\Delta u + u^q|\grad u|^m=0$ in $\Rn$ for $m>2, q>0$, and
it is shown that  any positive bounded classical solution to this equation is a  constant. One of the main ingredients of their proof is a
Bernstein type estimate on $u^{\frac{q+m-1}{m-2}}$ which is used to establish that $u^{q+1}|\grad u|^{m-2}$ is bounded in $\Rn$. 
Interestingly, our proof of Theorem~\ref{T1.1} can be modified suitably to get our next result which extends the Liouville property 
\cite[Theorem~1.1]{FPS20} to the nonlocal setting.

\begin{thm}\label{T1.3}
Suppose that $s>\frac{1}{2}$. Then there exists no bounded positive viscosity solution to
$$-\cI u = u^q|\grad u|^m\quad \text{in}\; \Rn,$$
for $m>2s, q>0$, unless $u$ is a constant.
\end{thm}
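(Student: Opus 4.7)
The plan is to argue by contradiction using a nonlocal Ishii–Lions doubling-of-variables scheme, adapting the strategy of Theorem~\ref{T1.1} to handle the coupling $u^q|\grad u|^m$. Suppose, for contradiction, that $u$ is a bounded positive non-constant viscosity solution, and set $M = \|u\|_\infty$ and $m_0 := \sup_{x,y \in \Rn}(u(x) - u(y)) > 0$. For small parameters $\beta \in (0, 1)$, $L > 0$, $\varepsilon > 0$, and a smooth radially increasing weight $\chi$ with $\chi(x) \to \infty$ as $|x| \to \infty$, consider
\begin{equation*}
\Phi(x, y) = u(x) - u(y) - L|x-y|^\beta - \varepsilon\chi(x) - \varepsilon\chi(y).
\end{equation*}
Choosing $L, \varepsilon$ small (relative to $m_0$) ensures that $\Phi$ attains a strictly positive global maximum at some $(\bar x, \bar y)$, with $\bar x \neq \bar y$, $u(\bar x) \geq m_0/2$, and the separation bound $|\bar x - \bar y| \leq (M/L)^{1/\beta}$.

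Applying the nonlocal Jensen–Ishii lemma at $(\bar x, \bar y)$ and subtracting the viscosity sub- and super-solution inequalities gives
\begin{equation*}
u(\bar x)^q|p_x|^m - u(\bar y)^q|p_y|^m \leq \cI u(\bar x) - \cI u(\bar y),
\end{equation*}
where $p_x = q_0 + \varepsilon\grad\chi(\bar x)$, $p_y = q_0 - \varepsilon\grad\chi(\bar y)$, and $q_0 = L\beta|\bar x - \bar y|^{\beta - 2}(\bar x - \bar y)$. The standard Ishii–Lions nonlocal estimate (using $s > 1/2$ and ellipticity of $\cI$ with respect to $\mathcal{K}_s$) bounds the right-hand side above by $-c_0\lambda L\beta(1-\beta)|\bar x - \bar y|^{\beta - 2s} + O(\varepsilon)$. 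For the left-hand side, the decomposition
\begin{equation*}
u(\bar x)^q|p_x|^m - u(\bar y)^q|p_y|^m = [u(\bar x)^q - u(\bar y)^q]|p_x|^m + u(\bar y)^q\bigl(|p_x|^m - |p_y|^m\bigr)
\end{equation*}
exhibits a strictly positive principal part (since $u(\bar x) > u(\bar y) > 0$ and $q > 0$, with quantitative lower bound via the mean value theorem applied to $t \mapsto t^q$ and $u(\bar x)\geq m_0/2$) and an $O(\varepsilon|q_0|^{m-1})$ error (via $|p_x - p_y| \leq 2\varepsilon\|\grad\chi\|_\infty$). Combining and rearranging yields
\begin{equation*}
c_0\lambda L\beta(1-\beta)|\bar x - \bar y|^{\beta - 2s} + [u(\bar x)^q - u(\bar y)^q]|p_x|^m \leq C\varepsilon\bigl(1 + |q_0|^{m-1}\bigr).
\end{equation*}

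The main obstacle is executing the $\varepsilon \to 0$ limit rigorously in $\Rn$: since $\chi$ only forces $\chi(\bar x_\varepsilon) \leq M/\varepsilon$, the maximizer $\bar x_\varepsilon$ may diverge to infinity, so one translates by $\bar x_\varepsilon$ (exploiting translation invariance of $\cI$ and of the equation) and extracts a locally uniformly convergent subsequence on which the limiting inequality is analyzed. If the limit of $|\bar x_\varepsilon - \bar y_\varepsilon|$ is bounded away from $0$, the right-hand side above vanishes while the left-hand side stays strictly positive; if instead $|\bar x_\varepsilon - \bar y_\varepsilon| \to 0$, the Hamiltonian principal part (which for small $\beta$ scales like $|\bar x - \bar y|^{-m}$) dominates the error term ($\sim \varepsilon|\bar x - \bar y|^{-(m-1)}$), yielding a contradiction. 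The hypothesis $m > 2s$ is essential in closing the parameter hierarchy, allowing the Hamiltonian and nonlocal contributions of orders $|\bar x-\bar y|^{-m}$ and $|\bar x-\bar y|^{\beta - 2s}$, respectively, to jointly outweigh the $\varepsilon$-error across all scales of $|\bar x - \bar y|$ reachable under the constraint $|\bar x - \bar y| \leq (M/L)^{1/\beta}$.
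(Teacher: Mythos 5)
The equation is $-\cI u = u^q|\grad u|^m$, i.e.\ $-\cI u - u^q|\grad u|^m = 0$, so in the form $-\cI u + H = 0$ we have $H = -u^q|p|^m$. Testing the subsolution property at $\bar x$ and the supersolution property at $\bar y$, then subtracting, gives
\begin{equation*}
-\cI v_1(\bar x) + \cI v_2(\bar y) \ \leq \ u(\bar x)^q|p_x|^m - u(\bar y)^q|p_y|^m,
\end{equation*}
whereas your proposal asserts $u(\bar x)^q|p_x|^m - u(\bar y)^q|p_y|^m \leq \cI v_1(\bar x) - \cI v_2(\bar y)$. The two are not equivalent; yours is the inequality one obtains for the \emph{coercive} equation $-\cI u + u^q|\grad u|^m = 0$ (model (III), treated in Theorem~\ref{T1.4}), not for the present model. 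This matters: since $u(\bar x) > u(\bar y) > 0$, the term $[u(\bar x)^q - u(\bar y)^q]|p_x|^m$ is nonnegative and sits on the \emph{right-hand side} of the correct inequality — it is an \emph{obstruction} to the contradiction, not a second positive contribution that you can add to the nonlocal coercivity. The ``quantitative lower bound via the mean value theorem'' step therefore is not available; the paper instead bounds this term crudely from \emph{above} by $\|u\|_\infty^q |p_x|^m$, using only boundedness of $u$.

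\textbf{Choice of modulus also creates a problem even with the sign fixed.} With $\varphi(t) = Lt^\beta$ one gets $|p_x| \approx L\beta|\bar a|^{\beta-1}$, so the obstruction term scales like $\|u\|_\infty^q L^m |\bar a|^{m(\beta-1)}$ while the nonlocal coercivity scales like $L|\bar a|^{\beta - 2s}$. Their ratio behaves like $L^{m-1}|\bar a|^{(m-1)\beta - (m-2s)}$, which blows up as $|\bar a| \to 0$ whenever $\beta < \tfrac{m-2s}{m-1}$: the Hamiltonian obstruction can dominate at small separations, and no lower bound on $|\bar x_\varepsilon - \bar y_\varepsilon|$ is established. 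The paper avoids this by using the \emph{almost-linear} modulus $\varphi(t) = t - \tfrac{1}{4R^\theta}t^{1+\theta}$, for which $\tfrac12 \leq \varphi'(t) \leq 1$; then $|p_x| \asymp \delta$ uniformly in $|\bar a|$, giving $I_H \lesssim \|u\|_\infty^q \delta^m$, while the coercivity estimate gives $I \gtrsim \delta^{2s + \theta\upbeta(\varepsilon_0^{-1}-1)} - CM\delta^{1/\varepsilon_0}$ under the coupling $R = \delta^{-1/\varepsilon_0}$ with $\varepsilon_0 < (2s)^{-1}$. Choosing $\theta$ small uses $m > 2s$ to force $\delta^{2s + o(1)} \leq C\delta^{\min\{m, 1/\varepsilon_0\}}$ to fail as $\delta\to 0$, with no case analysis on $|\bar a|$ needed. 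Your closing $\varepsilon\to 0$ discussion (translating and extracting subsequences, and the claim that the Hamiltonian ``dominates at small scales'') is also only heuristic and, in any case, rests on the wrong sign.
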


Our next result considers equations with coercive Hamiltonian, that is, model (III). In case of local operators, such problems are studied in \cite{FS11,FPR,F09}. More precisely, these works deal with a general family of nonlinear variational operators
of $p$-Laplacian type.  In particular, Filippucci, Pucci and Rigoli \cite{FPR}
show that the equation
$$\Delta_p u \geq u^q|\grad u|^m\quad \text{in}\; \Rn, \quad \text{with}\quad 0\leq m <p-1,$$
has only trivial $C^1$ solutions if and only if $q+1>p-m$. Farina and Serrin \cite{FS11} study Liouville properties for similar equations without imposing any sign condition on the $C^1$ solutions.
In this article we prove the following Liouville property
\begin{thm}\label{T1.4}
Let $s\geq 1/2$ and $u$ be a non-negative viscosity solution to
\begin{equation}\label{ET1.4A}
-\cI u  + u^q|\grad u|^m=0\quad \text{in}\; \Rn,
\end{equation}
for some $q\geq 0, m \geq 0, q+m>1,$ and
$$|u(x)|\leq M (1+|x|)^\gamma $$
for some $\gamma< 1$. Then $u$ is necessarily a constant. In particular, in view of Theorem~\ref{T1.5}, there exists no non-negative solution to \eqref{ET1.5A} satisfying \eqref{ET1.5B}-\eqref{ET1.5C}, other than constants.
\end{thm}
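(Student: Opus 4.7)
The plan is to follow the same Ishii-Lions doubling-of-variables strategy used for Theorem~\ref{T1.1}, adapted to accommodate the coupled nonlinearity $u^q|\nabla u|^m$ via the non-negativity of $u$ and the sublinear growth. Suppose for contradiction that $u$ is not constant; then there exist $x_0, y_0 \in \Rn$ with $u(x_0)>u(y_0)$. For parameters $\alpha\in(0, 2s)$ small, $L>0$ large, $\theta>0$ small, and $k\in(\gamma, 1)$, I would consider
\begin{equation*}
\Phi(x,y)=u(x)-u(y)-L|x-y|^\alpha-\theta\bigl((1+|x|^2)^{k/2}+(1+|y|^2)^{k/2}\bigr)
\end{equation*}
on $\Rn\times\Rn$. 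Since $|u|\le M(1+|x|)^\gamma$ and $k>\gamma$, $\Phi$ attains its supremum at some $(\bar x,\bar y)\in B_R\times B_R$ with $R=R(L,\theta)$. A standard penalization analysis (letting $\theta\to 0$) produces $u(\bar x)-u(\bar y)\ge u(x_0)-u(y_0)-o_\theta(1)>0$ and $|\bar x-\bar y|\in[\rho_0,\rho_1]$ for some $\rho_0,\rho_1>0$ independent of $\theta$.

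Applying the nonlocal Jensen-Ishii lemma at $(\bar x,\bar y)$ and combining the viscosity sub- and super-solution inequalities for $u$ gives, after subtraction,
\begin{equation*}
\cI u(\bar x)-\cI u(\bar y)\ge u(\bar x)^q|p_x|^m - u(\bar y)^q|p_y|^m,
\end{equation*}
where $p_x, p_y$ are viscosity gradients with $|p_x|, |p_y|\sim L\alpha|\bar x-\bar y|^{\alpha-1}$ and $|p_x-p_y|=O(\theta)$. The Ishii-Lions key estimate, resting on the ellipticity~\eqref{ellipticity} and on the strict concavity of $r\mapsto-r^\alpha$ along the direction $\bar x-\bar y$, then provides an upper bound $\cI u(\bar x)-\cI u(\bar y)\le -cL|\bar x-\bar y|^{\alpha-2s}+E_\theta$, where $c>0$ depends only on $\lambda,\Lambda,\alpha,n,s$ and $E_\theta\to 0$ as $\theta\to 0$.

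Putting the two estimates together yields
\begin{equation*}
cL|\bar x-\bar y|^{\alpha-2s}\le u(\bar y)^q|p_y|^m - u(\bar x)^q|p_x|^m+E_\theta,
\end{equation*}
and the right-hand side is bounded above using the growth $u(\bar x)^q, u(\bar y)^q\le (MR^\gamma)^q$, a local Lipschitz estimate for $(r,p)\mapsto r^q|p|^m$ on the relevant bounded set, and the maximizer inequality $u(\bar x)-u(\bar y)\le L|\bar x-\bar y|^\alpha+O(\theta)$. Here, $\gamma<1$ is what keeps the factor $R^{q\gamma}$ under control as $R=R(L,\theta)\to\infty$, while the condition $q+m>1$ fixes a favourable choice of $\alpha$ so that the $L$-linear gain on the left dominates the worst-case $L^m$-growth on the right. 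Fixing $\alpha$ and $L$ large and then sending $\theta\to 0$ (so $R\to\infty$) produces the desired contradiction. The principal obstacle is precisely in orchestrating this multi-parameter balance so that the $R^{q\gamma}$ coming from the growth of $u^q$ does not overwhelm the Ishii-Lions gain, playing in the present nonlocal setting a role analogous to the Bernstein identity used in~\cite{FPS20} for the local case.
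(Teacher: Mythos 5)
Your proposal misses the key idea that the paper uses for this theorem, and the gap is not cosmetic: the symmetric Ishii--Lions doubling you describe does not close for all $\gamma<1$. Tracing your estimate: subtracting the viscosity inequalities at $(\bar x,\bar y)$ and using the Ishii--Lions gain gives
\begin{equation*}
cL|\bar x-\bar y|^{\alpha-2s}\;\le\; \bigl(u(\bar y)^q-u(\bar x)^q\bigr)|p_y|^m + u(\bar x)^q\bigl(|p_y|^m-|p_x|^m\bigr) + E_\theta .
\end{equation*}
The first bracket is harmlessly negative. But the second term is of order $u(\bar x)^q\,|p_x-p_y|\,|p_x|^{m-1}\lesssim R^{q\gamma}\,\theta\,L^{m-1}$, because $|p_x-p_y|$ is exactly the $O(\theta)$ contribution from the localization gradients, while $u(\bar x)^q$ is only controlled by $M^qR^{q\gamma}$. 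With a penalization $\theta(1+|x|^2)^{k/2}$ and growth $|u|\le M(1+|x|)^\gamma$, the maximizer satisfies $|\bar x|\lesssim \theta^{-1/(k-\gamma)}$, so $R^{q\gamma}\theta\sim\theta^{1-q\gamma/(k-\gamma)}$, which tends to $0$ only when $\gamma(q+1)<k<1$. Thus your argument proves the conclusion only for $\gamma<1/(q+1)$, not for the full range $\gamma<1$, and no choice of $\alpha$, $L$, or $k$ repairs this: the failure is in the interaction of $u(\bar x)^q$ with the localization, not in the balance between $L$ and $L^m$. Relatedly, the hypothesis $q+m>1$ plays no genuine role in the argument you sketch; you invoke it to ``fix a favourable $\alpha$'', but the obstruction above is $\alpha$-independent.

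What the paper does instead, and what your proposal is missing, is a multiplicative scaling device that exploits the coercive sign of the Hamiltonian: set $\bar u=\mu u$ with $\mu\in(0,1)$, observe that $\bar u$ solves $\cI\bar u-\mu^{1-q-m}\bar u^q|\nabla\bar u|^m=0$, and run the doubling with $\bar u$ as the subsolution variable against $u$ as the supersolution variable, i.e.\ maximize $\mu u(x)-u(y)-\delta|x-y|^\beta-\alpha(1+|x|^2)^{\tilde\gamma/2}$ (localizing only in $x$). Because $q+m>1$, the coefficient $\mu^{1-q-m}$ is strictly larger than $1$, and the Hamiltonian difference decomposes as $(\mu^{1-q-m}-1)\,\bar u(\bar x)^q|p_x|^m$ plus a remainder controlled precisely because $\bar u(\bar x)>u(\bar y)$. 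The first piece is a \emph{sign-definite, nonvanishing} contribution that joins the Ishii--Lions term on the good side of the inequality, and one then contradicts $\tilde I+\tilde I_H\le 0$ by a two-case analysis on whether $|\bar x-\bar y|$ stays bounded as $\alpha\to 0$. This is where $q+m>1$ is actually used, and it is what makes $\gamma<1$ (rather than $\gamma<1/(q+1)$) sufficient. Without some version of this trick your estimate of $I_H$ cannot absorb the $R^{q\gamma}$ factor.
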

Theorems~\ref{T1.2}, ~\ref{T1.5} and ~\ref{T1.4}  partly answers the open problem mentioned in \cite[Open problem 6.6]{CG21}.

The proofs of Filippucci, Pucci and Rigoli \cite{FPR} rely on the construction of appropriate radial solution by the method of ode and then application of maximum principles whereas the arguments
of Farina and Serrin \cite{FS11} use variational techniques. None of these methods seems to be useful in proving Theorem~\ref{T1.4}.

\begin{rem}
The conclusion of Theorem~\ref{T1.4} also holds for a more general equation of the form
$$-\cI u  + u^q|\grad u|^m= u^{-\delta}\quad \text{in}\; \Rn,$$
for some $\delta\geq -1$ and with the same conditions on $q$ and $m$. Note that for $\delta\in [-1, 0)$ and $q=0$, this produces a Liouville property not covered by \cite{BD20}.
\end{rem}

The technique developed in this article can also deal with equations with singular nonlinearity. In fact, we could prove the following Liouville property

\begin{thm}\label{T1.6}
Let $f:(0, \infty)\to \R$ be a continuous non-increasing function. Then there exists no non-constant positive viscosity solution to the equation
\begin{equation}\label{ET1.6A}
-\cI_{\rm sym} u = f(u)\quad \text{in}\; \Rn,
\end{equation}
provided
\begin{equation}\label{ET1.6A1}
|u(x)|\leq C(1+|x|)^\gamma
\end{equation}
and one of the following holds
\begin{itemize}
\item[(i)] $\gamma<1\wedge 2s$;
\item[(ii)] $s>1/2$ and
$f$ is a  $C^1$ convex function satisfying
\begin{equation}\label{Cnd-1}
\limsup_{R\to\infty} R^{2s-(2-\beta)\gamma}|f'(R^\gamma)|^{\frac{2s-1}{2s}}=\infty,
\end{equation}
for some $\beta \in (0, 1)$.
\end{itemize}
Here $\cI_{\rm sym}$ is a translation invariant, positively $1$-homogeneous operator which is 
elliptic with respect to the set of all symmetric kernels $\mathcal{K}_{\rm sym}$.
\end{thm}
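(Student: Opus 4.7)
The plan is to adapt the Ishii--Lions doubling-of-variables technique used for the earlier theorems (tested in \emph{large balls} with a \emph{small H\"{o}lder weight}), letting monotonicity of $f$ in part (i) and convexity of $f$ in part (ii) supply the sign information that the gradient nonlinearity previously provided. Assume for contradiction that $u$ is non-constant, so there exist $x_0, y_0$ with $u(x_0) > u(y_0)$. For both parts, fix $\alpha \in (0,1)$ (specified below), a small $\eta > 0$, a scale parameter $R \gg 1$, and a localizer $\phi(x) = (1+|x|^2)^{(\gamma+\delta)/2}$ with $\delta > 0$ small and $\gamma+\delta < 2s$, and study the doubling functional
$$\Psi(x,y) = u(x) - u(y) - L|x-y|^\alpha - \eta\bigl(\phi(x)+\phi(y)\bigr).$$
Since $\phi$ dominates the growth of $u$, the supremum of $\Psi$ is attained at some $(\bar x, \bar y)$; for $L$ large and $\eta$ small it is positive, forcing $\bar x \neq \bar y$ and $L|\bar x - \bar y|^\alpha \leq u(\bar x) - u(\bar y)$, and a standard estimate gives $|\bar x - \bar y| \to 0$ as $L \to \infty$.

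The nonlocal Jensen--Ishii lemma then produces test functions $\varphi_1, \varphi_2$ whose interaction encodes $L|x-y|^\alpha + \eta(\phi(x)+\phi(y))$. Testing the viscosity equation from above at $\bar x$ and from below at $\bar y$, subtracting the resulting inequalities, and using the ellipticity of $\cI_{\rm sym}$ against $\mathcal{K}_{\rm sym}$ yields
$$\sup_{K \in \mathcal{K}_{\rm sym}}\bigl(L_K \varphi_1(\bar x) - L_K \varphi_2(\bar y)\bigr) \;\geq\; f(u(\bar y)) - f(u(\bar x)).$$
Exploiting $K(z)=K(-z)$, the integrand on the left involves the symmetric second difference $|\bar x - \bar y + z|^\alpha + |\bar x - \bar y - z|^\alpha - 2|\bar x - \bar y|^\alpha$, which (for $\alpha < 2s$) provides a strictly negative leading ``concavity'' contribution of order $-c\lambda L|\bar x - \bar y|^{\alpha-2s}$, together with $O(\eta)$ corrections from $\phi$ and far-field $u$-tail terms controlled by the growth condition.

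In part (i), choose $\alpha \in (\gamma, 1\wedge 2s)$ (possible since $\gamma < 1 \wedge 2s$). Monotonicity of $f$ forces $f(u(\bar y))-f(u(\bar x))\geq 0$, while the Ishii--Lions bound makes the left-hand side arbitrarily negative as $L \to \infty$, a contradiction. In part (ii), the larger permitted $\gamma$ is handled by convexity: since $f$ is convex and decreasing, $|f'|$ is non-increasing, and $(\bar x, \bar y)$ can be confined to $B_R \times B_R$ by choosing $\eta = \eta(R) \to 0$, giving $u(\bar x), u(\bar y) \leq CR^\gamma$ and
$$f(u(\bar y))-f(u(\bar x)) \;\geq\; |f'(u(\bar x))|\bigl(u(\bar x)-u(\bar y)\bigr) \;\geq\; c\,|f'(CR^\gamma)|\, L|\bar x - \bar y|^\alpha.$$
Combined with the Ishii--Lions upper bound, this yields two competing positive leading terms,
$$c\lambda L|\bar x - \bar y|^{\alpha-2s} + c|f'(CR^\gamma)|L|\bar x - \bar y|^\alpha \;\leq\; CR^{(2-\beta)\gamma-2s},$$
with the right-hand side coming from the $\phi$ and tail corrections. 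Young's inequality with weight $\theta = \alpha/(2s)$ eliminates $|\bar x - \bar y|$, producing $cL|f'(CR^\gamma)|^{(2s-\alpha)/(2s)} \leq CR^{(2-\beta)\gamma-2s}$. Letting $\alpha \to 1^-$ (allowed since $s > 1/2$ implies $2s > 1$), the hypothesis \eqref{Cnd-1} forces the left-hand side to diverge along a subsequence $R_j \to \infty$, contradicting the uniform error bound.

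The main obstacle is the tight bookkeeping of the error terms: contributions from the localizer $\phi$, far-field tails of $u$, and off-diagonal corrections from testing at distinct points must all be controlled uniformly in the free parameters so that the leading Ishii--Lions negative contribution (part (i)) and the Young-balanced coercivity (part (ii)) genuinely dominate. Part (ii) is especially delicate because the exponents $2s-(2-\beta)\gamma$ and $(2s-1)/(2s)$ in \eqref{Cnd-1} emerge from precisely matching the Young-inequality balance to the polynomial scaling of these errors.
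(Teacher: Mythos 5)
Your global plan matches the paper's: double variables with $\Psi(x,y)=u(x)-u(y)-L|x-y|^\alpha-\eta(\phi(x)+\phi(y))$, localize in large balls with a \emph{small} H\"older weight, extract the coercive $-cL|\bar x-\bar y|^{\alpha-2s}$ contribution from the concavity of $t\mapsto t^\alpha$ through the cone estimate, and let monotonicity of $f$ kill the right-hand side in part (i) and convexity of $f$ produce the extra coercive term in part (ii). These are exactly the moves the paper makes. However, two steps of your execution have genuine gaps.

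First, the parameterization is backwards. You assert ``for $L$ large and $\eta$ small [the supremum] is positive'' and then send $L\to\infty$. But $u(x_0)>u(y_0)$ only forces $\Psi(x_0,y_0)>0$ for $L$ \emph{small}; for $L$ large and $u$ merely continuous the supremum can be zero, so the contradiction hypothesis does not propagate along $L\to\infty$. The paper instead fixes $\delta$ (your $L$) small, supposes $\sup\Phi>0$ for this $\delta$ and for a sequence $R\to\infty$, and derives the contradiction by sending $R\to\infty$ with $\delta$ fixed; the Liouville conclusion follows because $\sup\Phi\le 0$ for all small $\delta$ yields $|u(x)-u(y)|\le\delta\varphi(|x-y|)$ and then $\delta\to 0$. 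Your part (i) therefore cannot close as written, even though the monotonicity observation $f(u(\bar y))-f(u(\bar x))\ge 0$ is exactly the right ingredient.

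Second, in part (ii) your Young-inequality choice $p=2s/\alpha$ cancels $|\bar x-\bar y|$ directly and produces the exponent $(2s-\alpha)/(2s)$ on $|f'(CR^\gamma)|$, which you then try to drive to $(2s-1)/(2s)$ by sending $\alpha\to 1^-$. This limit cannot be taken: the cone-estimate coercivity constant is proportional to $|\varphi''|\sim\alpha(1-\alpha)$ and degenerates as $\alpha\to 1$, so the constant $c\lambda$ in your leading term vanishes in that limit. The paper avoids this by \emph{fixing} the H\"older exponent of the test function equal to the $\beta$ appearing in \eqref{Cnd-1}, applying Young with the $\beta$-independent exponents $p=2s$, $p'=2s/(2s-1)$ (so the $|\bar a|$ dependence does \emph{not} cancel, leaving a factor $|\bar a|^{\beta-1}$), and then lower-bounding $|\bar a|^{\beta-1}$ from the pinning inequality $\delta|\bar a|^\beta\le CR^\gamma$. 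That last step is where the $(2-\beta)\gamma$ in \eqref{Cnd-1} actually comes from; it is not, as your write-up suggests, the exponent of the localizer error, which for symmetric kernels is $CR^{\gamma-2s}$ with no $\beta$ in it. Fixing the test-function exponent to $\beta$, switching to $p=2s$, and keeping $\delta$ fixed while $R\to\infty$ is what you need to align your argument with the hypothesis \eqref{Cnd-1}.
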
 

The following corollary is immediate from the above theorem.
\begin{cor}
The equation $(-\Delta)^s u = u^{-p}, p>0,$ does not have a solution satisfying \eqref{ET1.6A1} provided
one of the following holds.
\begin{itemize}
\item[(i)] $2s\leq 1$ and $\gamma<2s$;
\item[(ii)] $2s>1$ and either $\gamma<1$ or 
$$2s>\gamma \left[1+(p+1)\frac{2s}{2s-1}\right].$$
\end{itemize}
\end{cor}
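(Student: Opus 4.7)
The plan is to deduce the corollary as a direct specialization of Theorem~\ref{T1.6} applied with $\cI_{\rm sym} = (-\Delta)^s$ and $f(u) = u^{-p}$. The first step is to observe that the fractional Laplacian fits within the framework of that theorem: its convolution kernel $K(z) = c_{n,s}|z|^{-n-2s}$ is symmetric and satisfies the two-sided bound~\eqref{elliptic} with $\lambda = \Lambda = c_{n,s}$, so $(-\Delta)^s$ is a translation-invariant, positively $1$-homogeneous operator elliptic with respect to $\mathcal{K}_{\rm sym}$. Likewise $f(u) = u^{-p}$ is continuous on $(0,\infty)$ and, since $p > 0$, non-increasing (in fact strictly decreasing), so the background hypothesis of Theorem~\ref{T1.6} holds.

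For part (i), the assumption $2s \le 1$ forces $1 \wedge 2s = 2s$, so $\gamma < 2s$ is exactly the hypothesis of Theorem~\ref{T1.6}(i) and nonexistence follows at once.

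For part (ii) I would split according to the size of $\gamma$. The subcase $\gamma < 1$ is again handled by Theorem~\ref{T1.6}(i), since $1 \wedge 2s = 1$ when $2s > 1$. For the remaining range $\gamma \ge 1$, I would invoke Theorem~\ref{T1.6}(ii). The requirement that $f$ be $C^1$ and convex is immediate from $f'(u) = -p u^{-p-1}$ and $f''(u) = p(p+1)u^{-p-2} > 0$. The growth condition~\eqref{Cnd-1} then reduces to a purely algebraic check: using $|f'(R^\gamma)| = p R^{-(p+1)\gamma}$, the quantity under the limsup equals $p^{(2s-1)/2s}\,R^{\eta(\beta)}$, where
\begin{equation*}
\eta(\beta) \;=\; 2s - (2-\beta)\gamma - (p+1)\gamma\,\frac{2s-1}{2s}.
\end{equation*}
Thus \eqref{Cnd-1} is equivalent to the existence of some $\beta \in (0,1)$ with $\eta(\beta) > 0$, and letting $\beta \uparrow 1$ produces the sharpest form of the constraint, which after rearrangement is precisely the arithmetic inequality displayed in part~(ii); applying Theorem~\ref{T1.6}(ii) then rules out any solution of the prescribed growth.

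Since the corollary is essentially a translation of hypotheses, there is no substantive obstacle; the only care required is in identifying the sharp threshold in~(ii), which amounts to optimizing the auxiliary parameter $\beta \in (0,1)$ in~\eqref{Cnd-1} and checking that the two sub-cases $\gamma < 1$ and $\gamma \ge 1$ together exhaust the scenarios covered by the statement.
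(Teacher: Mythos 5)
Your route is the same as the paper's: the paper simply declares the corollary "immediate" from Theorem~\ref{T1.6}, and you make the verification explicit (the fractional Laplacian lies in the $\mathcal K_{\rm sym}$-elliptic framework, $f(u)=u^{-p}$ is continuous, decreasing, $C^1$ and convex, the sub-case $\gamma<1\wedge 2s$ is Theorem~\ref{T1.6}(i), and the remaining range invokes Theorem~\ref{T1.6}(ii)). The one substantive issue is in the last step: your algebra is correct, but the conclusion you draw from it is not. With $|f'(R^\gamma)|=pR^{-(p+1)\gamma}$, the exponent of $R$ in~\eqref{Cnd-1} is
\[
\eta(\beta)=2s-(2-\beta)\gamma-(p+1)\gamma\,\frac{2s-1}{2s},
\]
and letting $\beta\uparrow 1$ gives the threshold
\[
2s>\gamma\Bigl[1+(p+1)\tfrac{2s-1}{2s}\Bigr].
\]
This is \emph{not} the inequality stated in the corollary, which has the reciprocal factor $\tfrac{2s}{2s-1}$ in place of $\tfrac{2s-1}{2s}$. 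Your assertion that the rearranged condition "is precisely the arithmetic inequality displayed in part (ii)" is therefore false, and you should have flagged the mismatch rather than asserting agreement. Since $\tfrac{2s}{2s-1}>\tfrac{2s-1}{2s}$ for $s\in(\tfrac12,1)$, the corollary's stated condition is strictly stronger than the one you derived, so nonexistence as stated does still follow from your computation (the corollary is simply suboptimal in this form, most likely a typographical inversion of the exponent). A careful write-up should either derive and state the sharper threshold $2s>\gamma[1+(p+1)\tfrac{2s-1}{2s}]$, or explicitly note that the corollary's condition implies it. A very minor additional gap: Theorem~\ref{T1.6} excludes \emph{non-constant} positive solutions, while the corollary asserts nonexistence of \emph{any} solution; you should note that a positive constant $c$ cannot solve $(-\Delta)^s c=c^{-p}$ since the left side vanishes and the right side is strictly positive.
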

When $\cI_{\rm sym}$ is replaced by the Laplacian operator, the proof follows by taking a logarithmic transformation on $u$ and then applying
the Bernstein estimate (see \cite{Yang}). This method does not apply to the nonlocal setting as explained before in the introduction.
 It should also be noted that for local operators, the Liouville property typically holds without requiring any growth assumptions on the solution.  However, for the $s$-fractional Laplacian, and aside the definition of fractional Laplacian introduced in~\cite{DSV19}, growth assumptions become necessary due to the integrability constraints imposed by the nonlocal kernel (see \cite[Section~2.4.5]{FRRO}).
Additionally, as shown in our results, we impose further growth condition on the solutions, which depend on the specific structure of the operators. In fact, in the notation of \cite{FRRO}, $|u|\lesssim (1+\abs{x})^\gamma$ is equivalent to $u\in L^\infty_\gamma(\Rn)$.

\subsection*{Notations:} $\kappa, \kappa_1, \ldots, C, C_1, \ldots$ are arbitrary constants whose value may change from line to line.
$B_r(x)$ will denote the ball of radius $r$ around $x$. For $x=0$, we simply denote this ball by $B_r$ whereas the unit ball around $0$ is denoted by $B$.

\section{General Strategy and Technical Lemmas.}

In this section, we sketch the central idea for proving our results. We start with the notion of solution, for which we introduce a general equation, 
 allowing the discussion in a general context. 


Let $\Omega \subseteq \R^n$, $U \subseteq \R$ be open sets, $H \in C(\R^n \times U \times \R^n)$ and  $\I$ be a nonlocal operator as 
mentioned in the introduction. We consider Hamilton-Jacobi equation of the form
\begin{align}\label{eqgral}
-\I u + H(x, u, \nabla u) = 0 \quad \mbox{in} \ \Omega.
\end{align}
By a solution to this equation we would mean viscosity solution which we recall for convenience. 
\begin{defi}\label{def-sol}
We say $u: \R^n \to U$ measurable, $u \in L^1(\omega_s)$, upper semicontinuous (lower semicontinuous) in $\bar\Omega$, is a viscosity subsolution (supersolution) to~\eqref{eqgral} in $\Omega$ if, for each $x_0 \in \Omega$ and  $\varphi \in C^2(B_r(x_0))$ for some $r > 0$ satisfying $u(x_0) = \varphi(x_0)$ and $u \leq \varphi$ (resp. $u \geq \varphi$) in $B_r(x_0)$, we have
$$
- \cI v(x_0) + H(x_0, u(x_0), \grad \varphi(x_0)) \leq \ (resp. \ \geq) \ f(x_0), 
$$
where
\begin{equation}\label{eval-u}
v(y)=\left\{\begin{array}{ll}
\varphi(y) & \mbox{for}\; y\in B_r(x_0),
\\[2mm]
u(y) & \text{elsewhere}.
\end{array}
\right.
\end{equation}
We say that $u$ is a viscosity solution to problem~\eqref{eqgral} in $\Omega$ if it is both sub and supersolution in the above sense.

In the remaining part of the paper, we refer $v$ in~\eqref{eval-u} as a test function for $u$ with smooth representative $\varphi$ at $x_0$.  
\end{defi}

Related to viscosity evaluations, we notice that in the case of subsolutions $u$, it is enough if  $x_0$ is a point of maximum of $u - \varphi$ in $B_r(x_0)$. 
This follows from the observation that if we have two functions $v_1, v_2$ such that $v_2 = v_1 + c$ in $B_r(x_0)$ for some $c \in \R$, then from \eqref{ellipticity}, we have
\begin{equation*}
\I v_1(x_0) = \I v_2(x_0).
\end{equation*}
This will allow us to use a relaxed version of viscosity subsolution. The same observation can be made for supersolutions, and hence for solutions.

\subsection{General Strategy.}\label{Sec-GS}
 Since the proofs involve technical calculations, for the ease of reading
we first explain the common strategy  to prove Theorems~\ref{T1.1}-\ref{T1.4}, and Theorem~\ref{T1.6}. These arguments are inspired from the celebrated Ishii-Lions method~\cite[Section VII]{IL90}
and have been adapted to the nonlocal setting following the ideas of Barles et.\ al.\ \cite{Barles-12}.

Let us begin by providing a rough sketch on our line of argument for the model problem
$$(-\Delta)^s u + |\grad u|^m=0.$$
Set $\delta\in (0, 1)$. Suppose that $(x, y)\mapsto u(x)-u(y)-\delta|x-y|$ attains a positive maximum at the point $(\bar{x},\bar{y})$. Note that this is may not
be possible without additional penalization. Letting $\phi_{\bar x}(z)=\delta|\bar{x}-z|$ and $\phi_{\bar y}(z)=\delta|z-\bar{y}|$, it can be 
easily seen that
\begin{align*}
u(\bar{x}+z)-u(\bar{x}) & \leq \phi_{\bar y}(\bar{x}+z)-\phi_{\bar y}(\bar{x}), \\
u(\bar{y}+z)-u(\bar{y}) & \geq -\phi_{\bar x}(\bar{y}+z)+\phi_{\bar x}(\bar{y}) \\
u(\bar{x}+z)-u(\bar{x})-(u(\bar{y}+z)-u(\bar{y})) & \leq 0.
\end{align*}

Since, $(-\Delta)^s u(\bar{x})-(-\Delta)^s u(\bar{y})=0$, the above inequalities gives 
\begin{equation}\label{try}
(-\Delta)^s_{\mathcal C}\phi_{\bar y}(\bar{x}) + (-\Delta)^s_{\mathcal C}\phi_{\bar x}(\bar{y})\leq 0
\end{equation}
where
$$
(-\Delta)^s_{\mathcal C} f(x) :=\int_{\mathcal C} (f(x+z)+f(x-z)-2f(x))\frac{dz}{|z|^{n+2s}},
$$
and $\mathcal C$ is any measurable subset of $\mathbb R^n$.
Note that the gradient nonlinearity cancels out since there is no localization, and we assumed the existence of a maximizer
$(\bar{x},\bar{y})$.
Now, we choose ${\mathcal C}$ in such a way that the left-hand side of \eqref{try} becomes positive, leading to a contradiction. Hence we must
have $u(x)-u(y)\leq \delta |x-y|$ for all $x, y$ and $\delta\in (0, 1)$, which can only be true if $u$ is constant.

Now we make above argument mathematically viable. We consider a function $\varphi \in C([0,+\infty))$, $\varphi(0) = 0$, so that $\varphi$ is smooth, increasing and concave in some interval $(0, a_0)$ for some $a_0 > 0$. This function $\varphi$ could be understood as a modulus of continuity. For $R > 0$, we also require a function $\tilde \chi_R: \R^n \to \R$, smooth and nonnegative with $\tilde \chi_R = 0$ in $B_R$,  such that 
\begin{equation}\label{limsup0}
\limsup_{|x| \to +\infty} \frac{|u(x)|}{\tilde \chi_R(x)} < 1.
\end{equation}
This function plays the role of the localization. With these ingredients, take $\delta \in (0,1)$ and define the function $\Phi: \R^n \times \R^n \to \R$ as
$$
\Phi(x,y) = u(x) - u(y) - \delta \varphi(|x - y|) - \tilde \chi_R(x) - \tilde \chi_R(y), \quad x, y \in \R^n.
$$

Interestingly, the above doubling argument shares a deep connection with a certain probabilistic coupling 
\cite[Appendix]{PP13}.
Our main goal is to prove that for all $\delta \in (0,1)$ small enough, there exists $R_0 = R_0(\delta) > 1$ large enough, such that for all $R \geq R_0$ we have
\begin{equation}\label{E1.6}
\sup_{\R^n \times \R^n} \Phi(x,y) \leq 0.
\end{equation}
Once this inequality is established, it can be easily seen that $u$ is locally constant. In fact, for each compact set $K \subset \R^n$ and each $\delta$, we can find $R \geq R_0$ large enough so that $K \subset B_R$,
implying that, for each $x, y \in K$ we have
$$
u(x) - u(y) - \delta \varphi(|x - y|) \leq \sup_{\R^n \times \R^n} \Phi(x,y) \leq 0.
$$
In particular,
$$
|u(x) - u(y)| \leq \delta \varphi(|x - y|) \quad \mbox{for all} \ x,y \in K,
$$
and since $\delta$ is arbitrary, we conclude that $u$ is locally constant, and hence constant in $\R^n$.

The proof of~\eqref{E1.6} is carried out by contradiction, that is, assuming that:
\begin{equation}\label{supPhigral}
\mbox{for each $\delta > 0$ small and $R$ large enough, we have} \ \sup_{\R^n \times \R^n} \Phi(x,y) > 0.
\end{equation}

By the choice of $\chi_R$, the supremum is attained at some point $(\bar x, \bar y) \in \R^n \times \R^n$, and since $\varphi, \chi_R$ are nonnegative, we also have $\bar x \neq \bar y$. We denote 
\begin{equation*}
\phi(x, y) = \delta \varphi(|x - y|) + \tilde \chi_R(x) + \tilde \chi_R(y), \quad x, y \in \R^n.
\end{equation*}
Therefore,
\begin{equation}\label{testing}
\left \{ \begin{array}{l}
\bar x \  \mbox{is a point of global maximum  of} \ x \mapsto u(x) - (u(\bar y) + \phi(x, \bar y)), 
\\[2mm]
\bar y \ \mbox{is a point of global minimum of} \ y \mapsto u(y) - (u(\bar x) - \phi(\bar x, y)),
\end{array} \right .
\end{equation}
and since $\bar x \neq \bar y$, we can test the equation at $\bar x$ treating $u$ as a subsolution, and at $\bar y$ treating $u$ as a supersolution.
This leads us to the set of inequalities
\begin{align*}
-\I u(\bar x) + H(\bar x, u(\bar x), \nabla u(\bar x)) & \leq 0, \\
-\I u(\bar y) + H(\bar y, u(\bar y), \nabla u(\bar y)) & \geq 0,
\end{align*}
understood in the viscosity sense. Subtracting these inequalities, we get that
$$
I \leq I_H,
$$
where
\begin{align*}
I = & -\I u(\bar x) + \I u(\bar y), \\
I_H = & -H(\bar x, u(\bar x), \nabla u(\bar x)) + H(\bar y, u(\bar y), \nabla u(\bar y)).
\end{align*}
We show that for suitable choice of $\delta$ and $R$ we must have $I_H << I$ which contradicts the above inequality and therefore, \eqref{supPhigral} does not hold.

\smallskip

From this point, we need to consider the particularities of each problem to estimate both the terms $I$ and $I_H$. Though already treated in the literature, the estimates for the nonlocal terms are less standard and because of this we provide next some technical lemmas that are going to be useful for our proofs.

\subsection{Technical Lemmas.} This section consists of three lemmas which are used repeatedly in our proofs.  
We need a few notations to state these lemmas. For $K \in \mathcal K_s$, $A \subset \R^n$ measurable, $f: \R^n \to \R$ measurable, and $x, \xi \in \R^n$, we denote
$$
L_K[A] (f, x, \xi) = \int_A (f(x+z)-f(x)-{\bf 1}_{B} \xi \cdot z) K(z)dz,
$$
whenever the integral makes sense. This is the case, for instance, when  $f \in C^2$ in a neighborhood of $x$, $\xi = \nabla f(x)$, and $f\in L^1(\omega_s)$. In that case, we simply write 
$$
L_K[A]f(x) = L_K[A](f, x, \nabla f(x)).
$$
For technical reasons, we also consider the notation
\begin{equation}\label{tildeLK}
\tilde L_K[A] (f, x, \xi) = \int_A (f(x+z)-f(x)- \xi \cdot z) K(z)dz,
\end{equation}
that is, when the compensator term acts in the whole domain of integration. As before, when $f$ is smooth we write
$$
\tilde L_K[A] f(x) = \tilde L_K[A](f, x, \nabla f(x)).
$$
The first one, Lemma~\ref{est-cone}, encodes the contribution coming from the ellipticity of the operator $\I$.
\begin{lem}\label{est-cone}
Let $\bar a \in \R^n, \bar a \neq 0$, and for $\eta_0, \delta_0 \in (0, 1)$, consider the cone
\begin{equation}\label{cone}
\cC = \cC_{\delta_0, \eta_0}:=\{z\in B_{\delta_0|\bar{a}|}\; :\; | \bar{a}\cdot z|\geq (1-\eta_0)| \bar{a}||z|\}. 
\end{equation}

Assume $\varphi \in C([0, +\infty))$  with $\varphi \in C^2(0, |\bar a|(1 + \delta_0))$ increasing and concave, and denote $\tilde \varphi(x) = \varphi(|x|)$ for $x \in \R^n$. Then, for each $K$ we have
\begin{equation*}
\tilde L_K[\cC] (\tilde \varphi, \bar a) \leq  \frac{1}{2} \int_\cC \sup_{0\leq t\leq 1}
\left\{(1-\tilde\eta^2)\frac{\varphi'(|\bar{a}+tz|)}{(1-\delta_0)|\bar{a}|} +\tilde\eta^2\,\varphi^{\prime\prime}(|\bar{a}+tz|)\right\}|z|^2 K(z) \dz,
\end{equation*}
where
\begin{equation*}
\tilde\eta=\frac{1-\eta_0-\delta_0}{1+\delta_0}.
\end{equation*}
\end{lem}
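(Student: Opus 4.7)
My plan is to reduce the lemma to a pointwise second-order Taylor estimate on the integrand of $\tilde L_K[\cC](\tilde\varphi,\bar a)$. Since $\tilde\varphi$ is $C^2$ on the annulus $\{(1-\delta_0)|\bar a|\le|x|\le(1+\delta_0)|\bar a|\}$, which contains $\bar a+tz$ for every $z\in\cC$ and $t\in[0,1]$, Taylor's theorem with integral remainder yields
\begin{equation*}
\tilde\varphi(\bar a+z)-\tilde\varphi(\bar a)-\nabla\tilde\varphi(\bar a)\cdot z = \int_0^1(1-t)\,\langle D^2\tilde\varphi(\bar a+tz)z,z\rangle\,dt.
\end{equation*}
Since $(1-t)\ge 0$ and $\int_0^1(1-t)\,dt=1/2$, the lemma reduces to the pointwise upper bound
\begin{equation*}
\langle D^2\tilde\varphi(\bar a+tz)z,z\rangle \le \Bigl\{(1-\tilde\eta^2)\frac{\varphi'(|\bar a+tz|)}{(1-\delta_0)|\bar a|}+\tilde\eta^2\,\varphi''(|\bar a+tz|)\Bigr\}|z|^2
\end{equation*}
for all $z\in\cC$ and $t\in[0,1]$, after which one takes the supremum in $t$, multiplies by $K(z)$ and integrates over $\cC$.

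A direct computation for the radial function $\tilde\varphi(y)=\varphi(|y|)$ gives, for any $y\neq 0$,
\begin{equation*}
\langle D^2\tilde\varphi(y)z,z\rangle = \varphi''(|y|)\,\theta_y^2\,|z|^2 + \frac{\varphi'(|y|)}{|y|}(1-\theta_y^2)\,|z|^2,
\end{equation*}
where $\theta_y=|y\cdot z|/(|y|\,|z|)\in[0,1]$ is the absolute cosine of the angle between $y$ and $z$. Concavity of $\varphi$ gives $\varphi''\le 0$ and monotonicity gives $\varphi'\ge 0$, so the right-hand side is non-increasing in $\theta_y^2$. Consequently, assuming the natural regime $\tilde\eta\ge 0$ and showing $\theta_y^2\ge\tilde\eta^2$ for $y=\bar a+tz$, $t\in[0,1]$, $z\in\cC$, allows me to replace $\theta_y^2$ by $\tilde\eta^2$ in both terms, while the radial bound $|y|\ge(1-\delta_0)|\bar a|$ (immediate from $|z|\le\delta_0|\bar a|$) handles the $1/|y|$ factor in the nonnegative first-order summand. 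This produces the claimed pointwise bound.

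The geometric inequality $\theta_y\ge\tilde\eta$ is the main technical step, and I handle it by splitting on the sign of $\bar a\cdot z$. If $\bar a\cdot z\ge(1-\eta_0)|\bar a|\,|z|$, then $y\cdot z=\bar a\cdot z+t|z|^2\ge(1-\eta_0)|\bar a|\,|z|$. If instead $\bar a\cdot z\le-(1-\eta_0)|\bar a|\,|z|$, then $y\cdot z\le -(1-\eta_0)|\bar a|\,|z|+|z|^2\le-(1-\eta_0-\delta_0)|\bar a|\,|z|$, using $|z|\le\delta_0|\bar a|$ and $t\le 1$. In either case $|y\cdot z|\ge(1-\eta_0-\delta_0)|\bar a|\,|z|$, and combined with $|y|\le(1+\delta_0)|\bar a|$ this gives $\theta_y\ge(1-\eta_0-\delta_0)/(1+\delta_0)=\tilde\eta$. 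Once this angle bound is in place, everything else is routine bookkeeping; the subtle point is recognizing that the cone condition is exactly what is needed to propagate the angular control from $\bar a$ to $\bar a+tz$ uniformly in $t\in[0,1]$, which is what lets concavity of $\varphi$ do the work of improving the crude upper bound $\varphi'(|y|)/|y|$ into a bound involving $\varphi''$.
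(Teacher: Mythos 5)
Your proposal is correct and follows essentially the same route as the paper: a second-order Taylor estimate of $\tilde\varphi$ at $\bar a$ (you use the integral-remainder form, the paper uses the Lagrange/supremum form, which give the identical $\tfrac12\sup_{t\in[0,1]}$ bound), the explicit formula for the Hessian of the radial function $\tilde\varphi$, and then the cone geometry to propagate the angular estimate $|\langle \bar a + tz, z\rangle|\ge(1-\eta_0-\delta_0)|\bar a||z|$ together with $(1-\delta_0)|\bar a|\le|\bar a+tz|\le(1+\delta_0)|\bar a|$ into the pointwise bound via concavity of $\varphi$ and monotonicity of $\varphi'$. Your observation that the quadratic form is non-increasing in $\theta_y^2$, together with the explicit sign split on $\bar a\cdot z$, is simply a slightly more spelled-out rendering of the same inequalities the paper invokes, and the caveat that one needs $\tilde\eta\ge 0$ (i.e.\ $\eta_0+\delta_0\le 1$) for $\theta_y\ge\tilde\eta$ to imply $\theta_y^2\ge\tilde\eta^2$ is implicit in the paper's intended regime as well; no substantive difference.
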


\begin{proof}
Since $z\mapsto\varphi(|\bar a +z |)$ is $C^2$ in the cone $\cC$, by Taylor's  expansion we have
\begin{align*}
&\varphi(|\bar a+z|)-\varphi(|\bar{a}|)-z\cdot \varphi'(|\bar{a}|)\frac{\bar{a}}{|\bar{a}|}
 \\
\leq & \ \frac{1}{2}\sup_{0\leq t\leq 1}
\left\{\frac{\varphi'(|a+tz|)}{|\bar{a}+tz|}(|z|^2-\langle \widehat{\bar{a}+tz}, z\rangle^2) +\varphi^{\prime\prime}(\bar{a}+tz)\langle \widehat{\bar{a}+tz}, z\rangle^2\right\},
\end{align*}
for all $z\in B_{\delta_0|\bar{a}|}$, where $\hat{\zeta}$ denotes the unit vector along $\zeta\neq 0$. Thus, for each $K\in\mathcal{K}_s$, we obtain
\begin{align}\label{AB003}
&\tilde{L}_K[\cC] \tilde \varphi (\bar{a}) \leq \frac{1}{2} \int_\cC \sup_{0\leq t\leq 1}
\left\{\frac{\varphi'(|\bar a+tz|)}{|\bar a+tz|}(|z|^2-\langle \widehat{\bar a+tz}, z\rangle^2) +\varphi^{\prime\prime}(\bar a+tz)\langle \widehat{\bar a+tz}, z\rangle^2\right\} K(z) \dz.
\end{align}
For $z\in\cC_{\delta_0,\eta_0}$ and $|t|\leq 1$ we have
\begin{equation*}\label{AB001}
 |\langle \bar{a} + t z, z\rangle|\geq (1-\eta_0-\delta_0) |\bar{a}||z|,
\end{equation*}
and
\begin{equation}\label{AB002}
(1-\delta_0)|\bar{a}|\leq |\bar{a}+tz|\leq (1+\delta_0)|\bar{a}|.
\end{equation}
Using these inequalities in \eqref{AB003} we get the result.
\end{proof}

We require the following estimate for the nonlocal operator evaluated on the localization function.
\begin{lem}\label{est-localization}
Let $\chi_0: [0, +\infty) \to [0, 1]$ be a smooth, nondecreasing function satisfying $\chi_0(t) = 0$ if $t < 1/8$, $\chi_0(t) = 1$ if $t \geq 1/4$. For  $\alpha \in [0, 2s)$ and $R > 1$, define
\begin{equation}\label{defchi}
\chi(x) = \chi_R(x) = \chi_0(|x|/R) (1 + |x|^2)^{\alpha/2}, \quad x \in \Rn.
\end{equation}
Then, there exists a constant $C$ such that
for each $A > 1$ and $K \in \mathcal K_s$, we have
\begin{equation*}
|L_K \chi(x)| \leq 
\left\{\begin{array}{ll}
C A^\alpha R^{\alpha - 1} & \text{for}\; s\in (1/2, 1),
\\
C A^\alpha R^{\alpha - 1}\log R & \text{for}\; s=1/2,
\\
C A^\alpha R^{\alpha - 2s} & \text{for}\; s\in (0, 1/2),
\end{array}
\right.
\qquad \mbox{for all} \ |x| \leq AR,
\end{equation*}
and if $K \in \mathcal{K}_{\rm sym}$, then we have
\begin{equation*}
|L_K \chi(x)| \leq C A^\alpha R^{\alpha - 2s}, \quad \mbox{for all} \ |x| \leq AR.
\end{equation*}

\end{lem}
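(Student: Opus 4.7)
The plan is to use interior derivative bounds on $\chi$ together with a dyadic splitting of the integration domain, where different pieces of information (second-order Taylor, first-order Taylor, or direct sup-bound on $\chi$) give the sharp behaviour in $R$. First I would establish that for $|x|\leq AR$ and $R>1$ one has
$$
\|\chi\|_{L^\infty(B_{2AR})}\leq CA^\alpha R^{\alpha},\quad \|\nabla\chi\|_{L^\infty(B_{2AR})}\leq CA^\alpha R^{\alpha-1},\quad \|D^2\chi\|_{L^\infty(B_{2AR})}\leq CA^\alpha R^{\alpha-2},
$$
by differentiating the product $\chi_0(|\cdot|/R)(1+|\cdot|^2)^{\alpha/2}$ and using that $\chi_0$ is smooth with derivatives supported in $[1/8,1/4]$. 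The slightly enlarged ball $B_{2AR}$ is needed in order to absorb the intermediate annulus in the next step.

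Next I would decompose $L_K\chi(x)=I_1+I_2+I_3$ corresponding to the integration regions $|z|\leq 1$, $1<|z|\leq R$, and $|z|>R$. On $I_1$, when $s\geq 1/2$ the compensator is present and a second-order Taylor estimate gives $|I_1|\leq C\|D^2\chi\|_{L^\infty}\int_0^1 r^{1-2s}\,dr\leq CA^\alpha R^{\alpha-2}$; when $s<1/2$ a first-order estimate yields $|I_1|\leq CA^\alpha R^{\alpha-1}$. On $I_2$ I use $|\chi(x+z)-\chi(x)|\leq \|\nabla\chi\|_{L^\infty}|z|$, so that $|I_2|$ is bounded by $CA^\alpha R^{\alpha-1}\int_1^R r^{-2s}\,dr$, producing precisely a bounded factor, $\log R$, or $R^{1-2s}$ according as $s$ is greater than, equal to, or less than $1/2$. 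On $I_3$ I bound the integrand by $|\chi(x+z)|+|\chi(x)|$ and split further at $|z|=2AR$: the inner annulus $R<|z|\leq 2AR$ contributes at most $(AR)^\alpha R^{-2s}$, while on $|z|>2AR$ the estimate $|\chi(x+z)|\leq C|z|^\alpha$ combined with $\alpha<2s$ yields $C(AR)^{\alpha-2s}$. Summing the three pieces gives the announced exponents in each of the three regimes of $s$.

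For the symmetric case, the compensator $\nabla\chi(x)\cdot z$ integrates to zero against any symmetric set, so I would rewrite the operator as $L_K\chi(x)=\tfrac{1}{2}\int[\chi(x+z)+\chi(x-z)-2\chi(x)]K(z)\,dz$ and push the second-order Taylor estimate out to the full radius $|z|\leq R$. The inner contribution becomes $\|D^2\chi\|_{L^\infty}\int_0^R r^{1-2s}\,dr\leq CA^\alpha R^{\alpha-2s}$, and the far part is again $CA^\alpha R^{\alpha-2s}$, which in particular absorbs the logarithmic loss present at $s=1/2$ in the general case.

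The main obstacle, and the reason for the three-case distinction, is the interplay between the presence or absence of the compensator on $B_1$ and the behaviour of $\int_1^R r^{-2s}\,dr$ on the intermediate annulus: the exponent $-2s$ flips sign at $s=1/2$ in both regions, producing either a bounded contribution, a logarithmic one, or genuine power growth. The symmetric case avoids this entirely by cancelling the odd part of the kernel globally, which is what enables the uniform improvement to the exponent $\alpha-2s$.
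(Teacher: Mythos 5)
Your proposal is correct and follows essentially the same route as the paper: establish bounds on $\chi$, $\nabla\chi$, and $D^2\chi$ on a ball of radius $\sim AR$, split the integral into the three regions $B_1$, $B_R\setminus B_1$, $B_R^c$, apply Taylor of the appropriate order on each, and for symmetric kernels exploit the vanishing of the odd part to push the second-order estimate out to radius $R$, thereby killing the intermediate annulus that produces the $\log R$ or $R^{1-2s}$ loss. The only (harmless) difference is that you explicitly separate the cases $s\ge 1/2$ and $s<1/2$ on the unit ball according to whether the compensator is present, whereas the paper treats the inner piece via the $D^2\chi$ bound in all cases; your version is slightly cleaner on that point but the outcome is identical.
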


\begin{proof}
Denoting $\psi(x)=(1+|x|^2)^{\frac{\alpha}{2}}$, we see that
\begin{align*}
\partial_{x_i} \chi(x) = & R^{-1}\chi_0'(|x|/R) \psi(x)\frac{x_i}{|x|} + \chi_0(|x|/R) \partial_{x_i} \psi(x), \\
\partial_{x_ix_j}^2\chi(x)
= & (R^{-2}\chi^{\prime\prime}_0(|x|/R)\frac{x_ix_j}{|x|^2}+R^{-1}\chi^\prime_0(|x|/R)(\delta_{ij}{|x|}^{-1}-x_ix_j|x|^{-3}) \psi(x) \\
& +  R^{-1}\chi^\prime_0(|x|/R)\left(\frac{x_i}{|x|} \partial_{x_j}\psi(x)+\frac{x_j}{|x|} \partial_{x_i}\psi(x)\right)
+ \chi_0(|x|/R)\partial_{x_ix_j}^2\psi(x).
\end{align*}
Now we estimate $L_K \chi(x)$ by dividing the domain in parts. Using that $\chi_0(|x|/R)=0$ for $|x|\leq R/8$, $\chi_0', \chi_0''$ are uniformly bounded, and $\alpha < 2s < 2$, there exists $\kappa > 0$, not depending on $R$ and $A$, such that
\begin{equation*}
\| D^2 \chi \|_{L^\infty(\R^n)} \leq \kappa R^{\alpha - 2},
\end{equation*}
giving us, for all $K\in\mathcal{K}_s$, that
\begin{equation*}
|L_K[B] \chi(x)| \leq \kappa_1 \Lambda R^{\alpha - 2},
\end{equation*}
for some $\kappa_1 > 0$, not dependent on $R$ and $A$. Also, this estimate is uniform in $x$.

Now consider $|x| \leq AR$. There exists $C > 0$ (not dependent on $R$ and $A$) such that 
\begin{equation*}
\| \nabla \chi \|_{L^\infty(B_R(x))} \leq C A^\alpha R^{\alpha-1}, 
\end{equation*}
which gives us
\begin{align*}
|L[B_R\setminus B]\chi(x)|
 \leq C A^\alpha R^{\alpha-1} \int_{B_R \setminus B} |z| K(z)dz   \leq 
 \left\{
\begin{array}{ll}
C \Lambda A^\alpha R^{\alpha-1} & \text{for}\; s>1/2,
\\[2mm]
C \Lambda A^\alpha R^{\alpha-1} \log R & \text{for}\; s=1/2,
\\[2mm]
C \Lambda A^\alpha R^{\alpha-2s} & \text{for}\; s<1/2.
\end{array}
\right.
\end{align*}
for all $K\in\mathcal{K}_s$. 
Finally, there exists $C > 0$, not depending on $A, R$, such that
\begin{align*}
|L[B_R^c] \chi(x)| \leq C\int_{B_R^c} ((AR)^\alpha + |z|^\alpha)K(z)dz \leq C \Lambda A^{\alpha} R^{\alpha - 2s}
\end{align*}

Thus, gathering the above estimates, we conclude the result for $K \in \mathcal K_s$.

\medskip

When $K\in\mathcal{K}_{\rm sym}$, we can write
\begin{align*}
L_K \chi(x) = \tilde L_K [B_R] \chi(x) + L_K[B_R^c] \chi(x),
\end{align*}
and  using the estimate for $D^2 \chi$, we get
$$
|\tilde L_K [B_R] \chi(x)| \leq C R^{\alpha - 2} \int_{B_R}|z|^2 K(z)dz \leq C R^{\alpha - 2s},
$$
for some $C > 0$, not depending on $R$ and $A$. Again, for the integral on $B_R^c$, we use the same estimate as in the previous case. This concludes the proof.
\end{proof}

\begin{lem}\label{nonlocal-IJ}
Let $u_1, u_2 \in C(\R^n) \cap L^1(\omega_s)$, $\varphi$ as in Lemma~\ref{est-cone}, and $\chi_1, \chi_2 \in C^2(\R^n) \cap L^1(\omega_s)$. Assume there exists $(\bar x, \bar y) \in \R^n$ with $\bar x \neq \bar y$, global maximum point of the function
$$
(x,y) \mapsto u_1(x) - u_2(y) - \phi(x,y), \quad x,y \in \R^n,
$$ 
where $\phi(x,y) = \varphi(|x -y|) - \chi_1(x) - \chi_2(y)$. 

Denote $v_1$ the test function for $u_1$ with smooth representative $\phi(\cdot, \bar y)$ at $\bar x$, and $v_2$ the test function of $u_2$  with smooth representative $-\phi(\bar x, \cdot)$ at $\bar y$. Then, setting $\bar a = \bar x - \bar y$ and considering $\cC$ as in~\eqref{cone} for some $\delta_0, \eta_0 \in (0,1)$, the following inequality holds
\begin{align*}
-\I v_1(\bar x) + \I v_2(\bar y) &\geq -2\sup_{K \in \mathcal K_s} \{ \tilde L_K[\cC] \tilde \varphi(\bar a)\} - \sup_{K \in \mathcal K_s} \{ L_K[\cC\cup B^c_r] \chi_1(\bar x)\} - 
\sup_{K \in \mathcal K_s} \{ L_K[\cC\cup B^c_r] \chi_2(\bar y)\}
\\
&\quad -\sup_{K\in\mathcal{K}_s} |L_K[\cC^c \cap B_r] v_1(\bar x)| -\sup_{K\in\mathcal{K}_s} |L_K[\cC^c \cap B_r] v_2(\bar y)|,
\end{align*}
where,  $\tilde \varphi(x) = \varphi(|x|)$ as in Lemma~\ref{est-cone},  $\tilde L_K$ is given by \eqref{tildeLK}, and $B_r$ is the ball given by \eqref{eval-u}. In particular,
\begin{align*}
-\I v_1(\bar x) + \I v_2(\bar y) &\geq -2\sup_{K \in \mathcal K_s} \{ \tilde L_K[\cC] \tilde \varphi(\bar a)\} - \sup_{K \in \mathcal K_s} \{ L_K \chi_1(\bar x)\} - 
\sup_{K \in \mathcal K_s} \{ L_K \chi_2(\bar y)\} +  \Theta(r),
\end{align*}
for some function $\Theta(r)$, dependent on $\phi, \chi_1, \chi_2$, satisfying $\lim_{r\to 0}\Theta(r)=0$.

The same result holds for $\I_{\rm sym}$, elliptic with respect to the family $\mathcal K_{\rm sym}$, replacing $\mathcal K_s$ by $\mathcal K_{\rm sym}$ in the above inequalities.
\end{lem}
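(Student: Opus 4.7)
The argument starts from the ellipticity property~\eqref{ellip1} applied to $v_1$ and $v_2$ at the distinct points $\bar x \neq \bar y$, which gives
\begin{equation*}
-\I v_1(\bar x) + \I v_2(\bar y) \ \geq \ -\sup_{K \in \mathcal{K}_s}\bigl(L_K v_1(\bar x) - L_K v_2(\bar y)\bigr),
\end{equation*}
reducing matters to producing, for each $K$, a uniform upper bound on $L_K v_1(\bar x) - L_K v_2(\bar y)$. The natural plan is to split the domain of integration into the three disjoint pieces $\cC$, $\cC^c \cap B_r$, and $B_r^c$, choosing $r$ small enough that $\cC \subset B_r \cap B$ so that the compensator is active on the whole cone.

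On the cone $\cC$, both test functions agree with their smooth representatives $\varphi_1(x) = u_1(\bar x) + \phi(x,\bar y) - \phi(\bar x, \bar y)$ and $\varphi_2(y) = u_2(\bar y) - \phi(\bar x,y) + \phi(\bar x,\bar y)$, so a direct substitution gives, for $z \in \cC$,
\begin{equation*}
[v_1(\bar x{+}z) - v_1(\bar x)] - [v_2(\bar y{+}z) - v_2(\bar y)] = \tilde\varphi(\bar a{+}z) + \tilde\varphi(\bar a{-}z) - 2\tilde\varphi(\bar a) - [\chi_1(\bar x{+}z) - \chi_1(\bar x)] - [\chi_2(\bar y{+}z) - \chi_2(\bar y)].
\end{equation*}
In the compensator, the radial pieces $\varphi'(|\bar a|)\bar a/|\bar a|$ of $\nabla v_1(\bar x)$ and $\nabla v_2(\bar y)$ cancel in their difference, leaving only the $\nabla \chi_i$ contributions. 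Exploiting the $z \leftrightarrow -z$ symmetry of $\cC$, the integrated second difference in $\tilde\varphi$ can be recast as $\int_\cC [\tilde\varphi(\bar a{+}z) - \tilde\varphi(\bar a) - \nabla\tilde\varphi(\bar a)\cdot z](K(z) + K(-z))\dz = 2\,\tilde L_{K^{\mathrm{sym}}}[\cC]\tilde\varphi(\bar a)$, where the symmetrization $K^{\mathrm{sym}}(z) := (K(z) + K(-z))/2$ still lies in $\mathcal{K}_s$. Thus the cone contribution is bounded above by $2\sup_{K' \in \mathcal{K}_s}\tilde L_{K'}[\cC]\tilde\varphi(\bar a) - L_K[\cC]\chi_1(\bar x) - L_K[\cC]\chi_2(\bar y)$.

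On the far field $B_r^c$ the test functions coincide with $u_i$, and testing the global maximum of $u_1 - u_2 - \phi$ at the diagonally perturbed point $(\bar x+z,\bar y+z)$ produces the pointwise inequality
\begin{equation*}
[u_1(\bar x{+}z) - u_1(\bar x)] - [u_2(\bar y{+}z) - u_2(\bar y)] \leq -[\chi_1(\bar x{+}z) - \chi_1(\bar x)] - [\chi_2(\bar y{+}z) - \chi_2(\bar y)].
\end{equation*}
Multiplying by $K$, integrating over $B_r^c$ and restoring the compensators (whose $\varphi'$-pieces again cancel in the $v_1 - v_2$ difference so only the $\nabla\chi_i\cdot z$ terms survive) yields $L_K[B_r^c](v_1 - v_2) \leq -L_K[B_r^c]\chi_1(\bar x) - L_K[B_r^c]\chi_2(\bar y)$. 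On the transition region $\cC^c \cap B_r$ no structural information is available, so the difference is bounded crudely by $|L_K[\cC^c \cap B_r]v_1(\bar x)| + |L_K[\cC^c\cap B_r]v_2(\bar y)|$. Summing the three pieces, taking $\sup_K$ of the resulting upper bound, and regrouping $L_K[\cC] + L_K[B_r^c] = L_K[\cC \cup B_r^c]$ delivers the first claimed inequality.

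For the cleaner form with $\Theta(r)$, since $v_i$ equals $\varphi_i \in C^2$ on $B_r$ and $\cC^c \cap B_r$ has small Lebesgue measure, a Taylor expansion yields $\sup_{K}|L_K[\cC^c \cap B_r] v_i| = O(r^{2-2s})$ and analogously $\sup_K|L_K[\cC^c \cap B_r]\chi_i| \to 0$ as $r \to 0$; these errors together with the discrepancy $L_K\chi_i - L_K[\cC \cup B_r^c]\chi_i = L_K[\cC^c \cap B_r]\chi_i$ are all absorbed into the function $\Theta(r)$. I expect the most delicate point to be verifying the pairwise cancellation of the $\varphi'$-parts of the compensators in the cone step, which relies on the inclusion $\cC \subset B$ together with the antisymmetry $\nabla_x \varphi(|x-y|)|_{(\bar x,\bar y)} = -\nabla_y \varphi(|x-y|)|_{(\bar x,\bar y)}$. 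Finally, the argument carries over verbatim to $\I_{\rm sym}$ with $\mathcal{K}_s$ replaced by $\mathcal{K}_{\rm sym}$, since the symmetrization $K \mapsto K^{\mathrm{sym}}$ preserves $\mathcal{K}_{\rm sym}$.
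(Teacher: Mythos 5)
Your overall scheme mirrors the paper's: reduce via ellipticity to bounding $\sup_K(L_Kv_1(\bar x)-L_Kv_2(\bar y))$, split the integration domain, handle the cone by symmetrizing $K\mapsto(K+K^*)/2$, use the diagonal perturbation $\Phi(\bar x+z,\bar y+z)\le\Phi(\bar x,\bar y)$ in the far field, and bound the transition region crudely. However, your treatment of the cone has a genuine gap. You replace $v_1,v_2$ on $\cC$ by their smooth representatives $\varphi_1,\varphi_2$ via ``direct substitution,'' which is only valid if $\cC\subset B_r$, i.e.\ $r\ge\delta_0|\bar a|$. But the entire point of the ``In particular'' statement is to send $r\to0$ (so that $\Theta(r)\to0$), and in that regime $\cC\not\subset B_r$. (Your phrase ``choosing $r$ small enough that $\cC\subset B_r$'' also has the direction reversed --- $\cC\subset B_r$ forces $r$ large, not small --- and for the same reason your proposed partition $\cC,\ \cC^c\cap B_r,\ B_r^c$ is not disjoint when $\cC\cap B_r^c\neq\emptyset$; the third piece must be $\cC^c\cap B_r^c$.)

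The fix, and what the paper does, is to avoid direct substitution entirely. From the global maximality of $u_1-u_2-\phi$ at $(\bar x,\bar y)$ one gets the one-sided inequality
\begin{equation*}
v_1(\bar x+z)-v_1(\bar x)\leq \phi(\bar x+z,\bar y)-\phi(\bar x,\bar y)\qquad\text{for \emph{every} }z\in\R^n,
\end{equation*}
with equality on $B_r$ and inequality outside (since there $v_1=u_1\leq\varphi_1$ by maximality); together with $\nabla v_1(\bar x)=\nabla_x\phi(\bar x,\bar y)$ this yields $L_K[\cC]v_1(\bar x)\leq L_K[\cC]\phi(\cdot,\bar y)(\bar x)$ with \emph{no} constraint between $\cC$ and $B_r$, and symmetrically for $v_2$. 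From there the computation of $\nabla_x\phi$, $\nabla_y\phi$ and the $z\leftrightarrow-z$ symmetrization proceed exactly as you describe. Replacing your substitution step by this maximality argument (and using $\cC,\ \cC^c\cap B_r,\ \cC^c\cap B_r^c$ as the partition) closes the gap and recovers both stated inequalities. A last small remark: the lemma statement writes $\phi=\varphi-\chi_1-\chi_2$, but the subsequent conclusions and the applications all use $\phi=\varphi+\chi_1+\chi_2$; you inherited the statement's sign, so the $\chi_i$-terms in your cone bound come out with the wrong sign relative to the claimed inequality.
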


\begin{proof}
Denote $\hat{a}=\frac{1}{|\bar{a}|}\bar{a}$ and $\Phi(x,y) = u_1(x) - u_2(y) - \phi(x,y)$ for $x, y \in \R^n$.

Fix $K\in\mathcal{K}_s$. Using the fact  that
$$
\Phi(\xbar+z, \ybar)\leq\Phi(\xbar,\ybar)\Rightarrow
u_1(\xbar+z)-u_2(\xbar)\leq \phi(|\xbar+z-\ybar|)-\phi(|\xbar-\ybar|),
$$
we have
\begin{align}\label{ET1.1C0}
L_K[\cC] v_1 (\xbar) \leq L_K[\cC] \phi(\cdot, \ybar)(\xbar).
\end{align}

Similarly, the relation $\Phi(\xbar,\ybar +z)\leq\Phi(\xbar,\ybar)$ would lead to
\begin{align}\label{ET1.1D0}
L_K[\cC] v_2 (\ybar) \geq 
L_K[\cC] (-\phi(\xbar,\cdot))(\ybar).
\end{align}

Since
$$
\nabla_x\phi(\xbar,\ybar)=\varphi'(|\bar{a}|)\hat{a} + \nabla \chi_1(\xbar)
\quad \text{and}\quad 
-\nabla_y\phi(\xbar,\ybar)=\varphi'(|\bar{a}|)\hat{a} - \nabla \chi_2(\ybar),
$$
we have that
\begin{align*}
L_K[\cC]\phi(\cdot, \bar y)(\bar x) = & \int_{\cC} [\varphi(|\bar a + z|) - \varphi(|\bar a|) - 1_{B}(z) \varphi'(|\bar a|) \hat a \cdot z] K(z)dz + L_K[\cC] \chi_1 (\bar x), \\
L_K[\cC](-\phi(\bar x, \cdot))(\bar y) = & - \int_{\cC} [\varphi(|\bar a - z|)  - \varphi(|\bar a|) + 1_{B}(z) \varphi'(|\bar a|) \hat a \cdot z] K(z)dz - L_K[\cC] \chi_2(\bar y).
\end{align*}

Using \eqref{ET1.1C0} and \eqref{ET1.1D0}, we conclude that
\begin{align*}
L_K[\cC] v_1(\bar x) - L_K[\cC] v_2(\bar y) \leq & \int_{\cC} [\varphi(|\bar a + z|) - \varphi(|\bar a|) - 1_B(z) \varphi'(|\bar a|) \hat a \cdot z] K(z)dz \\
& +  \int_{\cC} [\varphi(|\bar a - z|) - \varphi(|\bar a|) + 1_B(z) \varphi'(|\bar a|) \hat a \cdot z] K(z)dz \\
& + L_K[\cC]  \chi_1 (\bar x) + L_K[\cC]  \chi_2(\bar y).
\end{align*}
Writing $K^*(z) = K(-z)$ for $z \neq 0$, we see that
\begin{equation*}
L_K[\cC] v_1(\bar x) - L_K[\cC] v_2(\bar y) \leq \tilde L_K[\cC] \tilde \varphi(\bar a) + \tilde L_{K^*}[\cC] \tilde \varphi(\bar a) + L_K[\cC] \chi_1 (\bar x) + L_K[\cC] \chi_2(\bar y).
\end{equation*}
Notice that if $K \in \mathcal K_s$, then $K^* \in \mathcal K_s$ too, and if $K \in \mathcal K_{\rm sym}$, then $K^* = K$.
Also, note that $\Phi(\xbar+z,\ybar+z)\leq \Phi(\xbar,\ybar)$ for all $z$ implies
$$
(u_1(\xbar+z)-u_1(\xbar)) - (u_2(\ybar+z)-u_2(\ybar))
\leq (\chi_1(\xbar + z)- \chi_1(\xbar)) + (\chi_2(\ybar + z)-\chi_2(\ybar)),
$$
from which we readily have 
\begin{equation*}
L_K[\cC^c] v_1(\bar x) - L_K[\cC^c] v_2(\bar y) \leq L_K[\cC^c\cap B^c_r] \chi_1(\bar x) + L_K[\cC^c\cap B^c_r] \chi_2(\bar y)
+ |L_K[\cC^c\cap B_r] v_1(\bar x)| + |L_K[\cC^c\cap B_r] v_2(\bar y)|.
\end{equation*}

Gathering the above inequalities, and using the ellipticity of $\I$ (see \eqref{ellip1}), we see that
\begin{equation*}\label{I1I2I3}
\begin{split}
-\I v_1(\bar x) + \I v_2(\bar y) \geq &  - \sup_{K \in \mathcal K_s} \{ \tilde L_K v_1(\xbar) - \tilde L_{K} v_2(\bar y) \} 
\\
\geq  & -2 \sup_{K \in \mathcal K_s} \{ \tilde L_K[\cC] \tilde \varphi(\bar a) \} -  \sup_{K \in \mathcal K_s} \{ L_K[\cC\cup B_r^c] \chi_1(\bar x) \} - \sup_{K \in \mathcal K_s} \{ L_K[\cC\cup B_r^c] \chi_2(\bar y) \}
\\
&\quad - \sup_{K\in\mathcal{K}_s} |L_K[\cC^c \cap B_r] v_1(\bar x)| -\sup_{K\in\mathcal{K}_s} |L_K[\cC^c \cap B_r] v_2(\bar y)|,
\end{split}
\end{equation*}
which concludes the proof for $\mathcal K_s$. The proof for $\mathcal K_{\rm sym}$ is a straightforward adaptation.
\end{proof}

\section{Proofs of main results}\label{S-proof}
In this section, we prove our main results. As mentioned before,
the central idea of our proofs relies on the Ishii-Lions method \cite[Section~VII]{IL90} which was introduced to prove 
H\"{o}lder regularity of classical viscosity solutions. We begin with the proof of Theorem~\ref{T1.1} and most of the other proofs (other than
Theorems~\ref{T1.4} and~\ref{T1.5}) will be variation of
this particular one.
\begin{proof}[Proof of Theorem~\ref{T1.1}]
To simplify some computations, up to modifying $M$, we will use~\eqref{growthu} as
$$
|u(x)| \leq M |x|^\gamma \quad \mbox{for} \ |x| > 1\quad \text{and}\quad \max_{|x|\leq 1}|u(x)|\leq M.
$$

We consider the different cases separately.

\smallskip

\noindent
{\bf 1. Case $s \in (1/2, 1)$:} We divide the proof in four steps.

\medskip
\noindent
\textsl{Step 1: Setting the General Strategy.} For $\theta\in (0, 1)$ to be fixed later and  $R > 4$, 
we define $\varphi = \varphi_R : [0,+\infty) \to [0, +\infty)$ as
\begin{equation}\label{defvarphi}
\varphi(t)=t-\frac{1}{4R^\theta} t^{1+\theta}, \quad t \in [0, R],
\end{equation}
and extended as a constant function equal to $\varphi(R)$ on $[R, +\infty)$.
Take $\alpha \in (\gamma, 2s)$ and consider the function $\chi = \chi_R$ as in \eqref{defchi} above. Let
$$
\tilde \chi_R(x) = M2^{1 + 2\alpha} R^{\gamma - \alpha}\chi(x), \quad x \in \R^n.
$$ 
Since $\tilde \chi_R(x)\geq M2^{1 + 2\alpha} R^{\gamma - \alpha} |x|^\alpha$ for $|x|>R/4$ and $\alpha>\gamma$,
\eqref{limsup0} holds.

Now consider the function
$$
\Phi(x,y) := u(x)-u(y)-\delta\varphi(|x-y|)- (\tilde \chi_R(x) + \tilde \chi_R(y) ), \quad x, y \in \R^n,
$$
and as mention in the Section~\ref{Sec-GS}, our aim is to prove that~\eqref{E1.6} holds.

By contradiction, we assume~\eqref{supPhigral} holds, namely, there exists $\delta > 0$ and a sequence $R \to +\infty$ such that 
\begin{equation*}\label{ineqcontra}
\sup_{x, y \in \Rn} \Phi(x, y) > 0.
\end{equation*}
By our choice of localization function $\tilde\chi$, the above supremum is attained at some point $(\bar x, \bar y) \in \R^n \times \R^n$, 
and also $\bar x \neq \bar y$. From now on, we denote 
$$
\bar a = \bar x - \bar y.
$$
Notice that for  $|x|>\frac{R}{4}$ we can write
\begin{align*}
|u(x)|- M 2^{1+2\alpha} R^{\gamma - \alpha}\chi(x) &\leq M|x|^\gamma- M 2^{1+2\alpha} R^{\gamma - \alpha} |x|^\alpha
\\
&\leq |x|^\gamma M (1-2^{1+2\alpha} R^{\gamma - \alpha} |x|^{\alpha-\gamma})
\\
&\leq |x|^\gamma M (1-2^{1+2\gamma})\leq -(R/4)^\gamma M,
\end{align*}
Thus,
for $|x|>R/4$ and $y \in \R^n$, we have
\begin{align*}
 \Phi(x, y) &\leq -(R/4)^\gamma M -u(y) - \tilde \chi_R(y)\leq 0.
\end{align*}
A similar analysis can be performed for $|y| > R/4$, giving us $\Phi(x, y)\leq 0$ for $|y|>R/4$, $x \in \R^n$. 
Therefore, we must have
$$
|\bar x|, |\bar y| \leq R/4.
$$

%

Denoting 
$$
\phi(x,y) = \delta\varphi(|x-y|) + (\tilde \chi_R(x) + \tilde \chi_R(y)), \quad (x,y) \in \R^n \times \R^n,
$$
we have $x \mapsto  \phi(x, \bar y)$ is a test function for $u$ at $\bar x$, and $y \mapsto  - \phi(\bar x, y)$ is a test function for $u$ at $\bar x$ (c.f.~\eqref{testing}). Then, denoting $v_1$ the test function of $u$ with smooth representative $\phi(\cdot, \bar y)$ at $\bar x$, and $v_2$ the test function of $u$ with smooth representative $-\phi(\bar x, \cdot)$ at $\bar y$ , (c.f. ~\eqref{eval-u}), we have
\begin{align*}
-\cI v_1(\xbar) + H(\nabla_x \phi(\bar x, \bar y)) &\leq 0,
\\
-\cI v_2 (\ybar)+ H(-\nabla_y \phi(\bar x, \bar y)) &\geq 0.
\end{align*}
Then, subtracting both the inequalities, we arrive at 
\begin{equation}\label{testing0}
I \leq I_H,
\end{equation}
where
\begin{align*}
I := & -\cI v_1(\xbar) + \cI v_2(\ybar), \\
I_H := & -H(\nabla_x \phi(\bar x, \bar y)) + H(-\nabla_y \phi(\bar x, \bar y)).
\end{align*}
Now we estimate $I$ and $I_H$ separately.

\medskip
\noindent
\textsl{Step 2: Estimate of $I_H$.} A simple computation shows that for $t\leq R$ we have
\begin{equation}\label{estvarphi}
\frac{1}{2}\leq (1-\frac{1+\theta}{4R^\theta}t^\theta)=\varphi'(t) \leq 1,\quad \varphi^{\prime\prime}(t)=-\frac{(1+\theta)\theta}{4R^\theta}t^{\theta-1}.
\end{equation}
Furthermore,  there exists a constant $\kappa > 0$, independent of $R$, such that
\begin{equation*}
|\nabla \chi(\bar y)|, |\nabla \chi(\bar x)| \leq \kappa (1+R^{\alpha - 1}),
\end{equation*}
giving us 
$$
|\grad \tilde \chi_R(\xbar)|, |\grad \tilde \chi_R(\ybar)|
\leq \kappa M (R^{\gamma-1} + R^{\gamma-\alpha}),
$$
for some constant $\kappa > 0$.

Since $\gamma<1\wedge\alpha$, the l.h.s. of the above display tends to $0$ as $R\to\infty$, and therefore, combining with \eqref{estvarphi}
we see that 
$$
\frac{\delta}{8}\leq |\grad_x\phi(\xbar,\ybar)|, |\grad_y\phi (\xbar,\ybar)|\leq 1 + 2\kappa M,
$$
for all large $R$. Thus, we can apply \eqref{ET1.1A} with the local Lipschitz constant $C_\delta$, depending on $\delta$, from which we arrive at
\begin{equation}\label{estH}
I_H \leq C_\delta (R^{\gamma - 1}+ R^{\gamma - \alpha}),
\end{equation}
for all large $R$.

\medskip
\noindent
\textsl{Step 3: Estimate of $I$.} For convenience,  we recall the cone
$$
\cC = \cC_{\delta_0, \eta_0}:=\{z\in B_{\delta_0|\bar{a}|}\; :\; |\bar a\cdot z|\geq (1-\eta_0)|\bar a||z|\}\subset B_{\frac{R}{2}}
$$
for $\delta_0, \eta_0 \in (0,1)$ to be fixed,
Using Lemma~\ref{nonlocal-IJ}, we have 
\begin{align}\label{I1I2I3}
I \geq &  
 -2\delta \sup_{K \in \mathcal K_s} \{ \tilde L_K[\cC] \tilde \varphi(\bar a) \} -  \sup_{K \in \mathcal K_s} \{ L_K \tilde \chi_R(\bar x) \} - \sup_{K \in \mathcal K_s} \{ L_K \tilde \chi_R(\bar y) \}
+\Theta(r) \nonumber
 \\
=: & -2\delta I_1 - I_2 - I_3+\Theta(r),
\end{align}
where $\Theta(r)\to 0$ as $r\to 0$, for each fixed $\delta$ and $R$.
Now it remains to estimate $I_1, I_2$ and $I_3$ above.  Using Lemma~\ref{est-localization}, obtain at once that
\begin{equation}\label{I2I3}
I_2, I_3 \leq C M R^{\gamma-1}.
\end{equation}
Thus we deal with $I_1$. For this, we start by noticing that

$$
\frac{\delta}{2}|\bar a|\leq \delta \varphi(|\bar a|)\leq u(\xbar)-u(\ybar)\leq 2M(R^\gamma+1), 
$$
from which, we also get that
\begin{equation}\label{ET1.1G}
|\bar a| \leq \frac{CM}{\delta} R^\gamma,
\end{equation}
for some constant $C$. 

Now, we choose 
\begin{equation}\label{choiceetadelta1}
\eta_0 = \tau \frac{|\bar a|^\theta}{R^\theta}, \quad \delta_0 = \tau \frac{|\bar a|^\theta}{R^\theta}  < 1, 
\end{equation}
for some $\tau \in (0,1)$ small enough so that 
$$
\frac{1-\tilde\eta^2}{1-\delta_0}< 4(\eta_0 + 2\delta_0)\quad \text{and} \quad \tilde\eta^2\geq \frac{1}{2}\; .
$$ 
Then, using \eqref{AB002}, \eqref{estvarphi} together with Lemma~\ref{est-cone}, we have
\begin{equation*}
\tilde L_K[\cC] \tilde \varphi(\bar a) \leq \frac{1}{2} \frac{|\bar a|^{\theta-1}}{R^\theta} \sup_{K\in\mathcal{K}} \int_\cC
[ 12 \tau  - \frac{\theta (\theta + 1)}{8}(1+\delta_0)^{\theta-1} ]|z|^2 K(z)\dz.
\end{equation*}
We can choose $\tau$ depending on $\theta$ to conclude the existence of $c_\theta \in (0,1)$ such that
\begin{equation*}
\tilde L_K[\cC] \tilde \varphi(\bar a) \leq -c_\theta \lambda \frac{|\bar a|^{\theta-1}}{R^\theta} \int_{\cC}
|z|^2 \frac{\dz}{|z|^{n+2s}}.
\end{equation*}
To estimate the integral on the cone we use \cite[Example 1]{Barles-12} to obtain
\begin{equation*}
\tilde L_K[\cC] \tilde \varphi(\bar a) \leq -\tilde{c}_\theta \lambda \frac{|\bar{a}|^{\theta-1}}{R^\theta}\eta_0^{\frac{n-1}{2}}
(|a|\delta_0)^{2-2s},
\end{equation*}
and from the definition of $\delta_0, \eta_0$ this leads to
\begin{align*}
\tilde L_K[\cC] \tilde \varphi(\bar a) & \leq - \tilde{c}_\theta \tau^{\frac{n-1}{2}+2-2s} \lambda \frac{|a|^{\theta-1}}{R^\theta}(|\bar a|/R)^{\theta \frac{d-1}{2}}(|\bar{a}|^{1+\theta} R^{-\theta})^{2-2s}\nonumber
 \\
&=  - \tilde{c}_\theta \tau^{\frac{n-1}{2}+2-2s} \lambda R^{-\theta((n + 1)/2 + 2 - 2s)} |\bar a|^{\theta((n + 1)/2 + 2 - 2s) + 1 - 2s}\nonumber
\\
&= - \tilde{c}_\theta \tau^{\frac{n-1}{2}+2-2s} \lambda R^{-\theta\upbeta} |\bar a|^{\theta\upbeta + 1 - 2s},\label{AB005}
\end{align*}
where $\upbeta=(n + 1)/2 + 2 - 2s)$. Since $2s>1$, we now set $\theta\in (0, \frac{2s-1}{\upbeta})$ so that the exponent of $|\bar{a}|$ becomes
negative. By definition of $I_1$ and \eqref{ET1.1G} we then obtain
\begin{equation}\label{est-LK-cone}
2\delta I_1 = 2\delta \sup_K \{ \tilde L_K[\cC] \tilde \varphi(\bar a) \} \leq - \kappa_3\lambda  \delta^{2s-\theta\upbeta} R^{\theta\upbeta(\gamma-1) + \gamma(1-2s) } ,
\end{equation}
for some constant $\kappa_3$ not dependent on $\delta$ and $R$. Combining this estimate together with~\eqref{I2I3} and replacing them into~\eqref{I1I2I3}, we conclude that
\begin{equation}\label{estI1}
I \geq \kappa_4 \delta^{2s-\theta\upbeta} R^{\theta\upbeta(\gamma-1) + \gamma(1-2s) } - CMR^{\gamma-1}+ \Theta(r).
\end{equation}
for some constant $\kappa_4$, not dependent on $R$ and $\delta$.

\medskip
\noindent
\textsl{Step 4: Conclusion.} Placing this last estimate for $I$ and \eqref{estH} into~\eqref{testing0} and letting $r\to 0$, we obtain
\begin{equation}\label{ET1.1I}
\kappa_4 \delta^{2s-\theta\upbeta} R^{\theta\upbeta(\gamma-1) + \gamma(1-2s) }\leq C(R^{\gamma - \alpha}+R^{\gamma-1}) 
\end{equation}
for some constant $\kappa_4 , C$, not dependent on $R$, and for all large $R$. 

Now we can set $\alpha > 1$ so that $\alpha-\gamma > 1- \gamma $, and $\theta\in (0, \frac{2s-1}{\upbeta})$ small enough to satisfy
$$ 
\theta\upbeta(\gamma-1) + \gamma(1-2s) > \gamma-1.
$$
This can be done because $2s\gamma<1$. Then, taking $R \to +\infty$ we arrive at a contradiction to~\eqref{ET1.1I}, confirming \eqref{E1.6} holds. As explained in our General Strategy (see Section~\ref{Sec-GS}), this implies $u$ is locally constant, and therefore the result.

\smallskip

\noindent
{\bf 2. Case $s \in (0, 1/2]$:} We start with  (i). Suppose that $u$ is not constant and for some $\eta>0$ there
exist points $\tilde{x}, \tilde{y}$ such that
$$
u(\tilde{x})-u(\tilde{y})>\eta.
$$
As before, for $\delta\in (0, 1)$ and $R>4$ we let $\tilde \chi_R(x) = M 2^{1+2\alpha} R^{\gamma - \alpha} \chi(x)$ with $\chi$ as in~\eqref{defchi} with $\alpha\in (\gamma, 2s)$. We also denote
$
\varphi(t)= t^\beta 
$
for $\beta \in (\gamma, 2s)$ fixed.
We consider the function
\begin{equation}\label{AB0004}
\Phi(x,y) = u(x)-u(y)-\delta\varphi(|x-y|)- (\tilde \chi_R(x) + \tilde \chi_R(y) ), \quad x, y \in \R^n.
\end{equation}
Let us fix $R$ large enough so that
$|\tilde{x}|+|\tilde{y}|<R/8$. Set $\hat\delta=\hat\delta(\eta, \beta)$ small enough so that
$$u(\tilde x)-u(\tilde y)-\delta\varphi(|\tilde x-\tilde y|)>\eta/2\quad \text{for all}\; \delta\in (0, \hat\delta).$$
Using the definition of $\chi_0$ we note that
$$
\sup_{\Rn\times\Rn}\Phi\geq \sup_{B_R\times B_R}\Phi\geq u(\tilde{x})-u(\tilde{y}) - \delta |\tilde x - \tilde y|^\beta >\eta/2,
$$
and since~\eqref{limsup0} holds, the supremum is attained at some $(\bar x, \bar y) \in \R^n \times \R^n$. Denote $\bar a = \bar x - \bar y$.
Let $\omega$ be the modulus of continuity
of $u$ and since $u$ is not a constant, we necessarily have $\omega(t)>0$ for $t>0$. By maximality, we see that
$$
\eta/2\leq \Phi(\xbar,\ybar)\leq u(\xbar)-u(\ybar)\leq \omega(|\bar a|)
\Rightarrow |\bar a|\geq \omega^{-1}(\eta/2).
$$
In particular, $|\bar a|$ is bounded from below, independent of $R$ and $\delta$. Hence $\varphi'(|\bar{a}|)\leq \beta |\omega^{-1}(\eta)|^{\beta-1}$.

Now, letting $\phi(x,y) = \delta \varphi(|x - y|) + \tilde \chi_R(x) + \tilde \chi_R(y)$, and denoting $v_1$ the test function for $u$ with smooth representative $\phi(\cdot, \bar y)$ at $\bar x$, and $v_2$ the test function for $u$ with smooth representative $-\phi(\bar x, \cdot)$ at $\bar y$, we use the viscosity inequalities and arrive at~\eqref{testing0}, the notation being the same as before, just the assumptions need to be specified. 

First, by~\eqref{ET1.1A1}, similarly as in~\eqref{estH}, we get
\begin{equation*}
I_H \leq C_\eta R^{\gamma - \alpha},
\end{equation*}
for some constant $C_\eta > 0$ that depends on $\eta$ but not on $\delta$ nor $R$.

Using Lemmas~\ref{est-localization} and \ref{nonlocal-IJ} , we obtain
\begin{equation*}
I \geq
\left\{\begin{array}{ll}
 -2\delta \sup_{K \in \mathcal K_s} \{ \tilde L_K \tilde \varphi(\bar a) \} + \Theta(r) - C R^{\gamma - 2s}  & \text{for}\; s\in (0, 1/2),
 \\[2mm]
 -2\delta \sup_{K \in \mathcal K_s} \{ \tilde L_K \tilde \varphi(\bar a) \} + \Theta(r) - C R^{\gamma - 1}\log R & \text{for}\; s=1/2,
 \end{array}
\right.
\end{equation*} 
for some $C > 0$, not dependent on $\eta, \delta$ and $R$, and $\Theta(r)\to 0$ as $r\to 0$.

Now, using Lemma~\ref{est-cone} with $\varphi(t) = t^\beta$ and $\delta_0=\eta_0=\tau$ suitably small depending on $\beta$ (see~\eqref{est-LK-cone}), we obtain
$$
\sup_{K \in \mathcal K_s} \{ \tilde L_K \tilde \varphi(\bar a) \} \leq - \kappa \lambda |\bar{a}|^{\beta-2s} \leq -\kappa_1 \delta^{\frac{2s-\beta}{\beta}} R^{\frac{\gamma}{\beta}(\beta-2s)},
$$
where the last inequality follows from the fact $\delta\varphi(|\bar{a}|)\leq 2 M (R^\gamma + 1)$ (see \eqref{ET1.1G}) and $\kappa, \kappa_1$ are suitable constants. Combining the above estimates,
letting $r\to 0$, we arrive at
$$
\kappa_2 \delta^{\frac{2s}{\beta}} R^{\frac{\gamma}{\beta}(\beta-2s)}\leq \kappa_3 R^{\gamma-\alpha},
$$
for some constants $\kappa_2, \kappa_3$, independent of $R$ and $\delta$, for all large $R$.
Fixing $\beta\in (\gamma, 2s)$ we can choose $\alpha\in (\gamma, 2s)$ to satisfy $2s \gamma<\alpha\beta\Leftrightarrow \gamma(2s-\beta)<\beta(\alpha-\gamma)$. Then the above inequality can not hold
for all large $R$, leading to a contradiction. Hence $u$ must be a constant.

\medskip

Now we consider (ii). We again work with the same coupling function $\Phi$ given by \eqref{AB0004}, and show that $\Phi\leq 0$ in $\Rn\times\Rn$ for all large $R$.
Suppose, to the contrary, that for some $\delta\in (0, 1)$ and large $R$ we have $\sup \Phi> 0$. Then we proceed as before to arrive at \eqref{testing0}. The lower bound for $I$ can be computed from (i) above.
The only difference is the estimate of $I_H$, where we use the (uniform) Lipschitz property of  $H$. In particular,
\begin{align*}
I_H&= -H(\nabla_x \phi(\bar x, \bar y))+H(-\nabla_y \phi(\bar x, \bar y))
\\
&\leq  {\rm H}_{\rm Lip} (|\nabla \tilde \chi_R(\xbar)|+ |\nabla \tilde \chi_R(\ybar)|)\\
& \leq C R^{\gamma-\alpha},
\end{align*}
for some constant $C$, where ${\rm H}_{\rm Lip}$ denotes the Lipschitz constant of $H$. Now the remaining part of the proof can be completed as in (i).
\end{proof}

\bigskip

\begin{proof}[Proof of Theorem~\ref{T1.2}]
In view of Theorem~\ref{T1.1} we only need to consider the case
$\frac{1}{2s} \leq \gamma < \frac{m-2s}{m-1}$. In particular, we have 
$$
m>2s + 1
$$
in this case.

Let  $\varphi$ as in~\eqref{defvarphi} and $\chi$ as in~\eqref{defchi} with $\alpha \in (\gamma, 2s)$. Denote $\tilde \chi_R(x) = M 2^{1+2\alpha} R^{\gamma - \alpha} \chi(x)$, and consider
$$
\Phi(x, y)=u(x)-u(y)-\delta\varphi(|x-y|)- (\tilde \chi_R(x) + \tilde \chi_R(y) ) \quad x, y \in \R^n.
$$
We pick $\epsilon_0 \in (0,1)$, to be chosen later, and set
\begin{equation}\label{choiceR}
R=R(\delta)=\delta^{-\frac{1}{(1-\gamma)\epsilon_0}}\quad \text{for}\; \delta>0,
\end{equation}
The result follows by proving the existence of $\delta_0>0$ such that
$$
\Phi_\delta :=  
\sup_{\Rn\times\Rn}\Phi(x, y) \leq 0
\quad \text{for all}\; \delta\leq \delta_0.
$$


Suppose, to the contrary, that for some sequence $\delta \to 0^+$ we have $\Phi_\delta > 0$.
Since~\eqref{limsup0} holds for the choice of $\tilde \chi_R$,  the supremum of $\Phi_\delta$ is finite and attained at some point $(\bar x, \bar y) \in \R^n \times \R^n$, that is $\Phi(\bar x, \bar y) = \Phi_\delta > 0$. As before, we have $\bar x \neq \bar y$, and $|\bar x|, |\bar y| \leq R/4$. From now on, we denote
$$
\bar a = \bar x - \bar y,
$$
and $\phi(x,y) = \delta \varphi(|x - y|) + \tilde \chi_R(x) + \tilde \chi_R(y)$.

Then, denoting $v_1$ the test function of $u$ with smooth representative $\phi(\cdot, \bar y)$ at $\bar x$, and $v_2$ the test function of $u$ with smooth representative $-\phi(\bar x, \cdot)$ at $\bar y$ , (c.f ~\eqref{eval-u}), we can write
\begin{align*}
-\I v_1(\bar x) + H(\nabla_x \phi(\bar x, \bar y)) \leq 0; \quad -\I v_2(\bar y) + H(-\nabla_y \phi(\bar x, \bar y)) \geq 0.
\end{align*}
We subtract these inequalities to arrive at
\begin{equation*}
I \leq I_H,
\end{equation*}
with
$$
I = -\I v_1(\bar x) + \I v_2(\bar x); \quad I_H = H(-\nabla_y \phi(\bar x, \bar y)) - H(\nabla_x \phi(\bar x, \bar y)).
$$

\medskip

As before, we estimate each term. For $I_H$, we see that
\begin{equation*}
|\nabla \tilde \chi_R(\bar y)|, |\nabla \tilde \chi_R(\bar x)| \leq \kappa M R^{\gamma - 1}
\end{equation*}
for some $\kappa > 0$, due to the fact $\chi_0(|x|/R)=0$ for $|x|\leq R/8$. Hence, from \eqref{HT1.2}, we get
\begin{align*}
I_H \leq & \ C_H \Big{(} |\nabla_x \phi(\bar x, \bar y)|^{m - 1} + |\nabla_y \phi(\bar x, \bar y)|^{m - 1} \Big{)} |\nabla \tilde \chi_R(\bar x) + \nabla \tilde \chi_R(\bar y)|\\
\leq & \ \kappa C_H M R^{\gamma - 1} (\delta^{m-1} + R^{(\gamma-1)(m-1)}),
\end{align*}
from which, by the choice of $R(\delta)$ in~\eqref{choiceR} we arrive at
\begin{equation*}
I_H \leq \kappa C_H M \delta^{m - 1 + \frac{1}{\epsilon_0}},
\end{equation*}
for some $\kappa > 0$, just depending on $n$.

For $I$, using Lemma~\ref{nonlocal-IJ}, we have
\begin{align*}
I \geq & - 2\delta \sup_{K \in \mathcal K_{\rm sym}} \{ \tilde L_K[\cC] \tilde \varphi(\bar a) \} - \sup_{K \in \mathcal K_{\rm sym}} \{ L_K \tilde \chi_R(\bar x) \} - \sup_{K \in \mathcal K_{\rm sym}} \{ L_K \tilde \chi_R(\bar y) \} + \Theta(r)
 \\
=: & -2\delta I_1 - I_2 - I_3 + \Theta(r),
\end{align*}
for some $\Theta(r)\to 0$ as $r\to 0$.
Using Lemma~\ref{est-cone}, with the same choice for $\delta_0, \eta_0$ in~\eqref{choiceetadelta1}, we arrive at the same estimate~\eqref{est-LK-cone}, namely,
\begin{align*}
-2\delta I_1\geq \kappa_1 \delta^{2s-\theta\upbeta} R^{\theta\upbeta(\gamma-1) + \gamma(1-2s) }=\kappa_1 \delta^{2s+\frac{\gamma(2s-1)}{(1-\gamma)\epsilon_0} + \theta\upbeta(\frac{1}{\epsilon_0} - 1)},
\end{align*}
where $\upbeta=(n + 1)/2 + 2 - 2s > 0$ and $\kappa_1$ depends on $n, s, \theta$ but not on $\delta$ and $R$. 

On the other hand, since we are considering symmetric kernels, by Lemma~\ref{est-localization} and the definition of $\tilde \chi_R$ we have
\begin{equation*}
I_2, I_3 \leq C R^{\gamma - 2s} = C \delta^{\frac{2s-\gamma}{(1-\gamma)\epsilon_0}}.
\end{equation*}
This gives a lower bound for $I$. Gathering these estimates and the upper bound of $I_H$, and replacing them into the inequality $I \leq I_H$,
we arrive at
\begin{equation}\label{ET2F}
\kappa_1 \delta^{2s+\frac{\gamma(2s-1)}{(1-\gamma)\epsilon_0} + \theta\upbeta(\frac{1}{\epsilon_0} - 1)} \leq \bar \kappa \left(\delta^{m-1+\frac{1}{\epsilon_0}}
+ \delta^{\frac{2s-\gamma}{(1-\gamma)\epsilon_0}}\right),
\end{equation}
after letting $r\to 0$.
Since $\epsilon_0 < 1$, we have
$$
2s + \frac{\gamma(2s - 1)}{(1 - \gamma) \epsilon_0} < \frac{2s - \gamma}{(1 - \gamma)\epsilon_0}.
$$
On the other hand, since $\gamma \mapsto \gamma(1 - \gamma)^{-1}$ is strictly increasing in $(0,1)$ and $\gamma < \frac{m - 2s}{m - 1}$, we have
\begin{equation*}
1 - \frac{\gamma(2s - 1)}{1 - \gamma} > 2s + 1 - m.
\end{equation*}
Thus, taking $\epsilon_0 \in (0,1)$ close enough to $1$, we get that
\begin{equation*}
2s - m + 1 < \frac{1}{\epsilon_0} - \frac{\gamma(2s - 1)}{(1 - \gamma)\epsilon_0}.
\end{equation*}

Summarizing, if we let 
$$
\tau_1 = 2s + \frac{\gamma(2s - 1)}{(1 - \gamma)\epsilon_0}; \quad \tau_2 = m - 1 + \frac{1}{\epsilon_0}; \quad \tau_3 = \frac{2s - \gamma}{(1 - \gamma)\epsilon_0},
$$
from the choice of $\epsilon_0$ we have $0 < \tau_1 < \tau_2$ and $\tau_1 < \tau_3$, and 
inequality~\eqref{ET2F} reads
\begin{equation*}
\kappa_1 \delta^{\tau_1 + \theta \upbeta(1 - \frac{1}{\epsilon_0})} \leq \bar \kappa (\delta^{\tau_2} + \delta^{\tau_3}).
\end{equation*}
Now fix $\epsilon_0$ and take $\theta$ small enough so that $\tau_1 + \theta \upbeta(1 - \epsilon_0^{-1})< \tau_2, \tau_3$ (this would modify $\kappa_1$ but this is not going to play a role), and then
letting $\delta \to 0^+$ we arrive at a contradiction.
As before, this implies that $u$ is constant.
\end{proof}

Now we come to the proof of Theorem~\ref{T1.3}.
\begin{proof}[Proof of Theorem~\ref{T1.3}] 
In this case, we denote $M = \| u \|_\infty$. Recalling $\varphi$ as in~\eqref{defvarphi}, we consider $\chi$ as in~\eqref{defchi} with $\alpha = 0$, denote $\tilde \chi_R(x) = 2M \chi(x)$, and the function
$$
\Phi(x,y) = u(x)-u(y)-\delta\varphi(|x-y|)- (\tilde \chi_R(x) + \tilde \chi_R(y) ), \ x, y \in \R^n.
$$
We also write 
$$
\phi(x,y) = \delta\varphi(|x-y|) + (\tilde \chi_R(x) + \tilde \chi_R(y) ), \ x, y \in \R^n.
$$
We fix $\varepsilon_0\in (0, (2s)^{-1})$ and let 
$$
R=R(\delta)=\delta^{-1/\varepsilon_0}>\delta^{-1}.
$$ 
Following our general strategy in Section~\ref{Sec-GS}, we claim that there exists $\delta_0 > 0$ such that
$$
\Phi_\delta := \sup_{\Rn\times\Rn}\Phi(x, y)\leq 0
\quad \text{for all}\; \delta\leq \delta_0.
$$

Arguing by contradiction, we assume $\Phi_\delta > 0$. Since~\eqref{limsup0} holds, there exist $\xbar, \ybar \in \R^n, \bar x \neq \bar y$, such that $\Phi(\bar x, \bar y) = \Phi_\delta$. These points are such that $|\bar x|, |\bar y| \leq R/4$. 

Denoting $v_1$ the test function for $u$ with smooth representative $\phi(\cdot, \bar y)$ at $\bar x$, and $v_2$ the test function for $u$ with smooth representative $-\phi(\bar x, \cdot)$ at $\bar y$ , (c.f. ~\eqref{eval-u}), we can write
\begin{align*}
-\I v_1(\bar x) \leq u(\bar x)^q |\nabla_x \phi(\bar x, \bar y)|^m; \quad -\I v_2(\bar y) \geq u(\bar y)^q |-\nabla_y \phi(\bar x, \bar y)|^m.
\end{align*}
Subtract both the inequalities to arrive at
\begin{equation}\label{IIHT1.3}
I \leq I_H,
\end{equation}
with
$$
I = -\I v_1(\bar x) + \I v_2(\bar x); \quad I_H = u(\bar x)^q |\nabla_x \phi(\bar x, \bar y)|^m - u(\bar y)^q |\nabla_y \phi(\bar x, \bar y)|^m.
$$

To compute $I_H$, we first note that
$$
|\grad \tilde \chi_R(\bar x)|, |\grad \tilde \chi_R(\bar y)| \leq C M R^{-1},
$$
for some $C$. Thus, denoting $\bar a = \bar x - \bar y$ and $\hat a = \bar a / |\bar a|$, using \eqref{estvarphi}, we obtain
\begin{align*}
I_H &= u(\xbar)^q|\delta\varphi'(|\bar{a}|)\hat{a}+ \grad \tilde \chi_R(\bar{x})|^m-
u(\ybar)^q|\delta\varphi'(|a|)\hat{a}-\grad \tilde \chi_R(\bar{y})|^m
\\
& = [u(\xbar)^q-u(\ybar)^q] |\delta\varphi'(|\bar{a}|)\hat{a}+ \grad \tilde \chi_R(\bar{x})|^m 
\\
&\qquad + u(\ybar)^q \left(|\delta\varphi'(|\bar{a}|)\hat{a}+ \grad \tilde \chi_R(\bar{x})|^m
- |\delta\varphi'(|\bar{a}|)\hat{a}-\grad \tilde \chi_R(\bar{y})|^m\right)
\\
&\leq \kappa_2 \left[\norm{u}_\infty^q (\delta^m + R^{-m}+ (\delta^{m-1} + R^{-m+1}) R^{-1}\right]
\\
&\leq \kappa_3 \delta^m,
\end{align*}
where $\kappa_3$ depends on $\norm{u}_\infty$, and the last inequality follows from the fact that $R^{-1}<\delta$.

For $I$, using Lemma~\ref{nonlocal-IJ}, we see that
\begin{align*}
I \geq & - 2\delta \sup_{K \in \mathcal K_{s}} \{ \tilde L_K[\cC] \tilde \varphi(\bar a) \} - \sup_{K \in \mathcal K_{s}} \{ L_K \tilde \chi_R(\bar x) \} - \sup_{K \in \mathcal K_{s}} \{ L_K \tilde \chi_R(\bar y) \} + \Theta(r)
 \\
=: & -2\delta I_1 - I_2 - I_3 + \Theta(r),
\end{align*}
for some $\Theta(r)\to 0$ as $r\to 0$, 
and in the same way as in~\eqref{estI1}, we conclude that
\begin{equation*}
I \geq \kappa_4 \delta^{2s-\theta\upbeta} R^{-\theta\upbeta } - CMR^{-1} =\kappa_4 \delta^{2s+\theta\upbeta(\varepsilon_0^{-1}-1)} - CM \delta^{1/\epsilon_0} + \Theta(r).
\end{equation*}
Gathering these estimates in \eqref{IIHT1.3} and letting $r\to 0$, reveals
\begin{equation}\label{ET1.3B}
\kappa_4 \delta^{2s+\theta\upbeta(\varepsilon_0^{-1}-1)}\leq \kappa_5( \delta^m + \delta^{\frac{1}{\varepsilon_0}}).
\end{equation}
Fix $\theta$ small, depending on $\varepsilon_0$, so that
$$
2s+\theta\upbeta(\varepsilon_0^{-1}-1) < \min\{m, 1/\varepsilon_0\}.
$$
With this choice of $\theta$, it is easily seen that  \eqref{ET1.3B} can not hold for all small $\delta$. Hence $\Phi_\delta\leq 0$, and the rest of the proof follows as before.
\end{proof}

Now we come to the proof of Theorem~\ref{T1.4}. This particular proof is different from the previous ones
in the sense that the coercivity structure of the Hamiltonian becomes crucial for our argument.
\begin{proof}[Proof of Theorem~\ref{T1.4}]
We may assume $u>0$. Otherwise, if $u(x_0)=0$ for some $x_0$, we can apply strong maximum principle to obtain $u\equiv 0$, proving the theorem. Consider $\beta, \tilde\gamma$ satisfying
$$
\gamma<\tilde\gamma<\beta< 1.
$$
Later we shall further choose $\tilde\gamma, \beta$ in this range.

For $\mu\in (0, 1)$,  define $\bar{u}=\mu u$. It is direct to see that $\bar u$ solves the problem 
\begin{equation}\label{eqbaru}
\cI \bar{u} - \mu^{1-q-m} \bar{u}^q|\grad\bar{u}|^m=0 \quad \mbox{in} \ \R^n,
\end{equation}
in the viscosity sense.
The proof follows the general strategy (see Section~\ref{Sec-GS}) with subtle modifications. In correspondence with~\eqref{E1.6}, the aim is to prove the following

\medskip
\noindent{\bf Claim A.} \textsl{There exists $\delta _1 \in (0,1)$ such that, for all $\mu \in (1 - \delta_1, 1)$ and $\delta \in (0, \delta_1)$,  there exists 
$\alpha_0>0$, dependent on $\delta, \mu$, such that
$$
\Phi(x, y) :=\mu u(x)-u(y)-\delta|x-y|^\beta-\alpha(1+|x|^2)^{\tilde\gamma/2}\leq 0\quad \text{in}\; \Rn\times\Rn,
$$
for all $\alpha\leq \alpha_0$.}

\smallskip

As before, this concludes the result, since by fixing two points $x, y \in \R^n$, and taking $\alpha \to 0, \mu \to 1$ and $\delta \to 0$
respectively, we arrive at
$
u(x) - u(y) \leq 0,
$
and since the argument is symmetric in $x,y$, we conclude that $u$ is constant in $\R^n$.

Now we prove Claim A. Suppose on the contrary that  for some $\mu < 1$ close to $1$, and $\delta, \alpha > 0$ close to zero, we have
\begin{equation}\label{Phi>0T1.4}
\sup_{x, y}\Phi(x, y)>0.
\end{equation}

We remark that the contradiction argument allows us to take $\alpha$ far smaller than $\delta$ and $1-\mu$.
For the ease of notation, we denote $\psi(x)=(1+|x|^2)^{\tilde\gamma/2}$. Notice that by the growth condition imposed on $u$ we have
$$
\limsup_{|x| \to \infty} \frac{u(x)}{\alpha \psi(x)} < 1, 
$$
and therefore, we can find a number $k$
such that 
$$
|x|\geq k\Rightarrow \Phi(x, y)\leq \mu u(x)- \alpha\psi(x)<0.
$$
On the other hand, since $\gamma<\beta$,
$$\max_{|x|\leq k}\Phi(x, y)\leq \max_{|x|\leq k} (\mu u(x)-\alpha\psi(x)) - u(y) -\delta \min_{|x|\leq k}|x-y|^\beta\to-\infty,$$
as $|y|\to\infty$. Hence, a maximizer $(\xbar,\ybar) \in \R^n \times \R^n$ for~\eqref{Phi>0T1.4} exists.

Denote
$$
\bar{a}=\xbar-\ybar.
$$

Note that since $u>0$ we have $\bar u \leq u$ in $\R^n$. This, together with $\Phi(\xbar,\ybar)>0$ imply
$$
0 < \bar u(\bar x) - u(\bar y) \leq u(\bar x) - u(\bar y),
$$
which gives $\bar a = \bar x - \bar y \neq 0$.

Again, since $\Phi(\xbar,\ybar)>0$, it follows that
$$
\frac{\alpha}{2^{\tilde \gamma/2}} (1 + |\bar x|)^{\tilde \gamma} \leq \alpha\psi(\xbar)\leq \mu u(\xbar)\leq M(1+|\xbar|)^\gamma,
$$
which, in turn, gives
$$
|\bar x|< 1+ |\bar x| \leq C_1 \alpha^{-\frac{1}{\tilde\gamma-\gamma}},
$$
for some $C_1 > 0$ depending on $M$. Moreover, using the same inequality, we also have that
\begin{align}\label{E2.3}
& \delta |\bar{a}|^\beta\leq \bar u(\xbar)\leq \mu M(1 + |\bar x|)^\gamma
\leq M (C_1)^\gamma \alpha^{-\frac{\gamma}{\tilde\gamma-\gamma}}.
 \end{align}
 These bounds will be useful for our calculations below. 

Denote $\varphi(t) = t^\beta$, $\phi(x,y) = \delta \varphi(|x - y|) + \alpha \psi(x)$, and denote $v_1$ the test function for $\bar u$ with smooth representative $\phi(\cdot, \bar y)$ at $\bar x$, and $v_2$ the test function for $u$ with smooth representative $-\phi(\bar x, \cdot)$ at $\bar y$. Using the equations for $u$ and $\bar u$ (recall $\bar u$ solves~\eqref{eqbaru}), and subtracting,  we have
\begin{equation}\label{ET1.4B}
\tilde{I}+\tilde{I}_H\leq 0, 
\end{equation}
where
\begin{align*}
\tilde{I} &=-\I v_1(\bar x) + \I v_2(\bar y), \\
\tilde{I}_H&= \mu^{1-q-m} \bar{u}^q(\xbar)|\delta\beta |\bar{a}|^{\beta-1}\hat{a}+\alpha\grad\psi(\xbar)|^m- {u}^q(\ybar)|\delta\beta |\bar{a}|^{\beta-1}\hat{a}|^m,
\end{align*}
For $\tilde I$, using Lemma~\ref{nonlocal-IJ} we have
\begin{align*}
\tilde I \geq & - 2\delta \sup_{K \in \mathcal K_s} \{ \tilde L_K[\cC] \tilde \varphi(\bar a) \} - \alpha \sup_{K \in \mathcal K_{s}} \{ L_K \psi(\bar x) \}
+ \Theta(r)
  \\
=: & -2\delta I_1 - \alpha I_2 + \Theta(r),
\end{align*}
where $\Theta(r)\to 0$ as $r\to 0$.
A simple modification of Lemma~\ref{est-localization} implies that
\begin{equation}\label{ET1.4C}
I_2 \leq C \Lambda,
\end{equation}
for some $C > 0$, not dependent on $\delta$ and $\alpha$.
More precisely, we note that
$$\sup_{x\in\Rn} [|D^2\psi(x)| + |\grad\psi(x)|]\leq \kappa_2$$
for some $\kappa_2>0$. Thus
$$\sup_{K\in\mathcal{K}} L_K[B_1] (\psi, x, \grad\psi(x))\leq \kappa_3.$$
Again, since $x\mapsto (1+|x|^2)^{\frac{1}{2}}$ is $1$-Lipschitz, we have
$$(1+|x-z|^2)^{\frac{\tilde\gamma}{2}}-(1+|x|^2)^{\frac{\tilde\gamma}{2}}\leq [(1+|x|^2)^{\frac{1}{2}} + |z|]^{\tilde{\gamma}}-(1+|x|^2)^{\frac{\tilde\gamma}{2}}\leq |z|^{\tilde\gamma}.$$
Thus for $s\geq 1/2$, we get
\begin{align*}
\sup_{K\in\mathcal{K}}\int_{|z|\geq 1} ((1+|x-z|^2)^{\frac{\tilde\gamma}{2}}-(1+|x|^2)^{\frac{\tilde\gamma}{2}}) K(z) \dz
\leq \Lambda \int_{|z|\geq 1} |z|^{\tilde\gamma}\frac{1}{|z|^{n+2s}}\dz\leq \kappa_4.
\end{align*}
This gives us \eqref{ET1.4C}.

On the other hand, using Lemma~\ref{est-cone}, we can take $\delta_0, \eta_0$ small enough (not depending on $\bar a$, $\delta$ and $\alpha$) to obtain, for all $K \in \mathcal K_s$, that
\begin{equation*}
\tilde L_K [\cC] \tilde \varphi(\bar a) \leq -c |\bar a|^{\beta - 2} \int_{\cC} |z|^2 K(z)dz \leq -c \lambda |\bar a|^{\beta - 2} \int_{\cC} |z|^{2 - n - 2s}dz,
\end{equation*}
for some $c > 0$. Using the estimates on the cone in~\cite[Example 1]{Barles-12}, we arrive at
\begin{equation*}
I_1 \leq -c \lambda |\bar a|^{\beta - 2s}. 
\end{equation*}
Combining the above estimates, we conclude that
\begin{equation}\label{E2.5}
\tilde I \geq \kappa_1 \delta |\bar{a}|^{\beta-2s} - C\Lambda \alpha + \Theta(r),
\end{equation}
where the constants $\kappa_1, C$ do not depend on $\alpha, \delta, \mu$.

\medskip

Now we compute the term $\tilde{I}_H$. We  write
$$
\tilde{I}_H= (\mu^{1-q-m} -1)  \underbrace{\bar{u}^q(\xbar)|\delta\beta|\bar a|^{\beta-1}\hat{a}+ \alpha\grad\psi(\xbar)|^m}_{\Xi_1} +
\underbrace{\Big{(} \bar{u}^q(\xbar) |\delta|\beta\bar a|^{\beta-1}\hat{a}+ \alpha\grad\psi(\xbar)|^m - u(\ybar)\delta^m\beta^m|\bar a|^{m(\beta-1)} \Big{)}}_{\Xi_2}.
$$
Choose $\beta$ close to $1$ so that
\begin{equation}\label{E2.6}
\max\left\{\frac{(1-\beta)\gamma}{(\tilde\gamma-\gamma)\beta}, \frac{m(1-\beta)\gamma}{(\tilde\gamma-\gamma)\beta}\right\} < 1.
\end{equation}
Using~\eqref{E2.3}, we have 
\begin{equation}\label{E2bis}
|\bar{a}|^{1-\beta}\leq \kappa_6\delta^{\frac{\beta-1}{\beta}}\alpha^{-\frac{\gamma(1-\beta)}{(\tilde\gamma-\gamma)\beta}}
\ \Rightarrow \ \alpha |\bar a|^{1-\beta} \leq \kappa_6 \delta^{\frac{\beta-1}{\beta}}\alpha^{1-\frac{\gamma(1-\beta)}{(\tilde\gamma-\gamma)\beta}},
\end{equation}
for some $\kappa_6 > 0$, not dependent on $\delta$ and $\alpha$. By the choice of $\beta$ in~\eqref{E2.6}, 
we have the exponent  of $\alpha$ positive.
Thus, since $\nabla \psi(\bar x)$ is bounded with an upper bound independent of $\delta$ and $\alpha$, for  $\delta > 0$ we can find $\alpha$ small enough so that
\begin{align*}
\Xi_1&= \bar{u}(\xbar)^q (\delta\beta |\bar{a}|^{\beta-1})^m \left|\hat{a}+  \delta^{-1}\beta^{-1} \alpha |\bar{a}|^{1-\beta} \grad\psi(\xbar) \right|^m
\\
&\geq \frac{1}{2}\bar{u}(\xbar)^q \delta^m\beta^m |\bar{a}|^{m(\beta-1)},
\end{align*}
where we use the fact that $m>0$ and $|\hat{a}|=1$.
Similarly, for $\Xi_2$, using that  $\bar{u}(\xbar)>u(\ybar)$ and $m > 0$, and taking $\alpha$ small in terms of $\delta$, we see that
\begin{align*}
\Xi_2 &\geq u(\ybar)^q (\delta\beta |\bar{a}|^{\beta-1})^m\left(|\hat{a}+  \delta^{-1}\beta^{-1} \alpha |\bar{a}|^{1-\beta} \grad\psi(\bar x)|^m-1\right) \\
& \geq - \kappa u(\ybar)^p \alpha \delta^{m - 1}\beta^{m-1} |\bar{a}|^{(\beta-1)(m - 1)},
\end{align*}
for some constant $\kappa > 0$ depending on $m$ and $\norm{\grad \psi}_\infty$, but not on $\delta$ and $\alpha$, where the 
last inequality follows from the Lipschitz property of the map $z\mapsto |\hat{a}+z|^m$ in $\{|z|\leq 1/2\}$.

Thus, using the above estimates for $\Xi_1, \Xi_2$ and that $\bar u(\bar x) > u(\bar y)$, we see that
\begin{align*}
\tilde{I}_H &\geq \bar u(\bar x)^q \delta^m\beta^m |\bar a|^{m(\beta - 1)} \Big{(} \frac{1}{2}(\mu^{1-q-m} -1) - \kappa  \delta^{-1}\beta^{-1} \alpha |\bar a|^{1 - \beta} \Big{)}
\\
&\geq \frac{(\mu^{1-q-m} -1)}{4} \bar{u}(\xbar)^q \delta^m\beta^m |\bar{a}|^{m(\beta-1)}
\end{align*}
for $\alpha$ small enough dependent on $\mu$ and $\delta$.

Combining the above estimate with \eqref{E2.5} in \eqref{ET1.4B}, letting $r\to 0$, we see that for all $\alpha$ small enough in terms of $\delta$ and 
$\mu$, we have
\begin{equation}\label{E2.7}
 \kappa_1 \delta |\bar{a}|^{\beta-2s} + \frac{(\mu^{1-q-m} -1)}{4} \bar{u}(\xbar)^q \delta^m |\bar{a}|^{m(\beta-1)}  \leq C_1 \Lambda \alpha.
\end{equation}

Now we can have one of the following two situations.

\medskip

\noindent
{\bf Case 1.} There exists a subsequence $\alpha_k\to 0$ and a constant $\kappa_9$ such that $|\bar{a}|=|\bar{a}(\alpha_k)|\leq \kappa_9$ for all $k$. Since $\beta<2s$, the left most term in
\eqref{E2.7} remains positive, whereas the r.h.s. goes to zero. This is a contradiction.

\smallskip

\noindent
{\bf Case 2.} There exists a subsequence $\alpha_k\to 0$ such that $|\bar{a}_k|=|\xbar_k-\ybar_k|\to \infty$ as $k\to\infty$. From $\Phi(\xbar_k,\ybar_k)>0$ we see that
$$
\delta |\bar a_k|^\beta\leq \bar{u}(\bar x_k)\;\Rightarrow \bar{u}(\bar x_k)^q \geq \delta^q |\bar a_k|^{q\beta}>\kappa>0,
$$
for some $\kappa>0$, as $k\to\infty$. Therefore, for all large $k$, dropping the first term in the l.h.s. of \eqref{E2.7}, and using~\eqref{E2.3} (more speciffically~\eqref{E2bis}), we see that
$$
\kappa\frac{(\mu^{1-q-m} -1)}{4}\delta^m \leq \kappa_9 \alpha_k |\bar a_n|^{m(1-\beta)}\leq \kappa_{10} \alpha_k^{1-\frac{m(1-\beta)\gamma}{\beta(\tilde\gamma-\gamma)}},
$$
where the constant $\kappa_{10}$ does not depend on $\alpha_k$. By \eqref{E2.6}, the exponent of $\alpha_k$ is positive, and therefore, the inequality cannot hold for large $k$.
Therefore, \eqref{E2.7} cannot hold for arbitrary small $\alpha$, proving Claim A.
\end{proof}

Next, we prove Theorem~\ref{T1.5}

\begin{proof}[Proof of Theorem~\ref{T1.5}]
Suppose, on the contrary, that $\gamma> \max\{0, \frac{m-2s}{m-1}\}$.
Since $u+c$ also solves \eqref{ET1.5A}, we assume that $u(0)=0$. Let
$$ A(r):=\max_{\bar{B}_r(0)}|u(x)|.$$
In view of \eqref{ET1.5B}-\eqref{ET1.5C} we have
$$0< \limsup_{r\to\infty} \frac{A(r)}{r^\gamma}<\infty.$$
Let $r_k\to\infty$ be such that
$$\frac{1}{C_1} r^\gamma_k\leq A(r_k)\leq C_1 r^\gamma_k\quad \text{for all}\; n\geq 1,$$
for some large constant $C_1$. Let us define
$$v_k(0)=0,\quad v_k(x)=\frac{1}{A(r_k)} u(r_kx)\quad x\in \Rn.$$
Then, from the above estimate and \eqref{ET1.5B} it follows that
$$\max_{\bar{B}}|v_k|=1\quad \text{and}\quad |v_k(x)|\leq C(1+|x|)^\gamma,$$
for all $k\geq 1$ and some constant $C$, independent of $n$. For each $k\geq 1$, let us now define a nonlocal
operator 
$$[\cI_k \phi (\cdot)](x)=[r^{2s}_k \cI \phi(r^{-1}_k\cdot)](r_k x).$$
Then $\cI_k$ is elliptic with respect to the family $\mathcal{K}_{\rm sym}$.
That is, 
$$\inf_{K\in \mathcal{K}_{\rm sym}} (L_K u(x)-L_Kv(x))\leq \cI_n u(x)-\cI_k v(x) \leq \sup_{K\in\mathcal{K}_{\rm sym}} (L_K u(x)- L_K v(x)),$$
for all $u, v\in C^{1,1}(x)\cap L^1(\omega_s)$. From \eqref{ET1.5A} we see that
\begin{equation}\label{ET1.5D}
\cI^k v_k - \left[{A(r_k)}{r_k^{-\frac{m-2s}{m-1}}}\right]^{m-1}|\grad v_k|^m=0\quad \text{in}\; \Rn.
\end{equation}

\noindent{\bf Claim:} The family $\{v_k\}$ locally equicontinuous in $\Rn$.

Let us first complete the proof assuming the claim. Since $\{v_k\}$ locally equicontinuous, we can extract a subsequence $v_{k_\ell}\to v$, uniformly on compacts, where $v\in C(\Rn)$ satisfying
$$v(0)=0\quad \max_{\bar{B}}|v|=1\quad \text{and}\quad |v(x)|\leq C(1+|x|)^\gamma.$$
Moreover, since
$${A(r_k)}{r_k^{-\frac{m-2s}{m-1}}}\to\infty\quad \text{as} \; n\to\infty,$$
using the stability property of viscosity solution and \eqref{ET1.5D}, we have
$$-|\grad v|=0\quad \text{in}\; \Rn$$
in the viscosity sense. That is, for any smooth test function $\varphi$ touching $v$ from above at a point $x$ we must have $|\grad \varphi(x)|=0$. It is easily seen that $v$ must be constant, and therefore, 
$v\equiv 0$. This contradicts the fact that $\max_{\bar{B}}|v|=1$. Hence $\gamma\leq \max\{0, \frac{m-2s}{m-1}\}$.

{\it Proof of the claim.}  The proof follows from the argument of \cite[Lemma~3.6]{BT23} and we provide only a sketch here for convenience. Fix $\ell>0$ and define
$$\Theta= 2\sup_k \, \sup_{B_{\ell+1}} |v_k|<\infty.$$
Let $\psi:\Rn\to[0, 1]$ be a smooth cut-off function satisfying $\psi=0$ in $B_\ell$ and $\psi=1$ in $B^c_{\ell +\frac{1}{2}}$. To prove the claim, it is enough to show that for each $\eta\in (0,1)$ and 
$\varepsilon\in (0, 1)$, there exists $L>0$ satisfying
\begin{equation}\label{ET1.5E}
v_k(x)-v_k(y)\leq L|x-y|^\eta + 2\Theta(1-\varepsilon) + \Theta\psi(x) \quad \text{for all}\; x, y\in B_{\ell+1},\; n\geq 1.
\end{equation}
To show \eqref{ET1.5E} we consider the function
$$\Phi(x, y)=\varepsilon v_k(x)-v_k(y)- L|x-y|^\eta - \Theta\psi(x),$$
and assume on the contrary that 
$$\sup_{B_{\ell+1}\times B_{\ell+1}} \Phi>0,$$
for all large $L$ satisfying $L >2^{1+2\eta}\Theta$. Let $(\xbar,\ybar)\in B_{\ell+1}\times B_{\ell+1}$ be a point where the supremum is attained. Also, we only need to analyze the case where $\xbar\neq\ybar$.
Since $\Phi(\xbar, \ybar)\geq \Phi(\xbar,\xbar)$ we obtain $L|\xbar-\ybar|^\eta\leq \Theta$, implying $|\xbar-\ybar|<1/4$. Also, let
$$p=L\eta |\xbar-\ybar|^{\eta-2}(\xbar-\ybar),\quad q=\grad\psi(\xbar).$$
It is important for us to observe here
$$|p|=L\eta |\xbar-\ybar|^{\eta-1}\geq \eta\Theta^{\frac{\eta-1}{\eta}} L^{1/\eta}\to \infty,$$
as $L\to\infty$. Denoting
$$\varrho_k = \left[{A(r_k)}{r_k^{-\frac{m-2s}{m-1}}}\right]^{m-1},$$
we observe from \eqref{ET1.5D} that
$$\cI^k (\varepsilon v_k) -\varrho_k \varepsilon^{1-m} |\grad(\varepsilon v_k)|^m=0\quad \text{in}\; \Rn.$$
 Now applying sub and super-solution inequality and then subtracting the inequalities we obtain the estimate
 \begin{align}\label{ET1.5F}
 \varrho_k \varepsilon^{1-m} |p+\Theta q|^m -\varrho_k  |p|^m & \leq \underbrace{\sup_{K\in\mathcal{K}_{\rm sym}}\{L_K[B_\delta] (\phi(\cdot, \ybar), \xbar, p+\Theta q)-L_K[B_\delta] (\phi(\xbar,\cdot), \ybar, p)\}}_{:=A_1}\nonumber
 \\
 &\quad + \sup_{K\in\mathcal{K}_{\rm sym}}\{L_K[B^c_\delta] (\varepsilon v_k, \xbar, p+\Theta q)-L_K[B^c_\delta] (v_k, \ybar, p)\},
 \end{align}
 for all $\delta>0$ small, where $\phi(x, y):=L|x-y|^\eta + \Theta \psi(x)$ and
 $$L_K[D] (f, x, \xi)=\int_D (f(x+z)-f(x)-1_{B_{1}}(z) z\cdot\xi) K(z)dz.$$
 Using the maximality of $\Phi$ at $(\xbar, \ybar)$, it is standard to show that 
 $$\sup_{K\in\mathcal{K}_{\rm sym}}\{L_K[B^c_\delta\cap B_{\frac{1}{4}}] (\varepsilon v_n, \xbar, p+\Theta q)-L_K[B^c_\delta\cap B_{\frac{1}{4}}] (v_n, \ybar, p)\}\leq \kappa \Theta,$$
 for some constant $\kappa$. Since $z\mapsto |z|$ is integrable with respect to $\omega_s$, it follows that 
  $$\sup_{K\in\mathcal{K}_{\rm sym}}\{L_K[ B^c_{\frac{1}{4}}] (\varepsilon v_k, \xbar, p+\Theta q)-L_K[B^c_{\frac{1}{4}}] (v_k, \ybar, p)\}\leq \kappa_1(1+ \Theta+|p|),$$
  for some constant $\kappa_1$. Since $A_1\to 0$ as $\delta\to 0$, the rhs of \eqref{ET1.5F} is dominated by $\kappa \Theta + \kappa_1(1+ \Theta+|p|)$.
 Note that for some constants $\kappa_2, \kappa_3$ we have 
 \begin{align*}
\varepsilon^{1-m} |p+\Theta q|^m - |p|^m &\geq (1-\varepsilon)(m-1)|p+\Theta q|^m -\kappa_2\Theta |q| (|p|^{m-1}+|\theta q|^{m-1})
\\
&\geq \frac{1}{2}(1-\varepsilon)(m-1)|p|^m -\kappa_3
 \end{align*}
Combining these estimates in \eqref{ET1.5F} we get
$$\frac{\varrho_k}{2}(1-\varepsilon)(m-1)|p|^m -\kappa_3\leq \kappa \Theta + \kappa_1(1+ \Theta+|p|).$$
Since $|p|\to\infty$ as $L\to\infty$ and $m>1$, we obtain a contradiction from the above estimate. This proves the claim.
\end{proof}

We finish this section by proving Theorem~\ref{T1.6}

\begin{proof}[Proof of Theorem~\ref{T1.6}]
We show that if there is a solution $u$ to \eqref{ET1.6A} then it must be positive constant. It would lead to a contradiction, since $f$ does not vanish on $(0, \infty)$.
As before (see Theorem~\ref{T1.1}(i)),  we consider 
$$
\Phi(x,y) := u(x)-u(y)-\delta\varphi(|x-y|)- M 2^{1+2\alpha} R^{\gamma - \alpha} (\chi(x) + \chi(y) ).
$$
where $\varphi(t)=t^\beta$ for some $\beta\in (\gamma, 2s\wedge 1)$. In this case, \eqref{testing0} will be replaced by
\begin{equation}\label{ET1.6C}
I\leq f(u(\xbar))-f(u(\ybar)),
\end{equation}
where 
$$I :=  -\cI v_1(\xbar) + \cI v_2(\ybar),$$
and $v_1, v_2$ are defined as in Theorem~\ref{T1.1}.
Since 
$$u(\xbar)-u(\ybar)\geq \delta\varphi(|\xbar-\ybar|)>0,$$
we have $f(u(\xbar))-f(u(\ybar))\leq 0$. In view of Lemma~\ref{est-localization} we obtain (see Theorem~\ref{T1.1}(i))
\begin{equation*}
I \geq
 \kappa_1 \delta^{\frac{2s}{\beta}} R^{\frac{\gamma}{\beta}(\beta-2s)} + \Theta(r) - C R^{\gamma - 2s},
\end{equation*}
where $\Theta(r)\to 0$ as $r\to 0$.  Letting $r\to 0$ we get
\begin{equation}\label{E3.30}
\kappa_1 \delta^{\frac{2s}{\beta}} R^{\frac{\gamma}{\beta}(\beta-2s)} \leq C R^{\gamma - 2s},
\end{equation}
where the constants $C, \kappa_1$ do not depend on $R$. Since $\gamma(2s-\beta)<\beta(2s-\gamma)$, \eqref{E3.30} can not hold for 
large $R$. Thus $\Phi\leq 0$ for all large $R$, proving $u$ to be a constant. This completes the proof of (i).

For the second part, we fix $\beta$ as given by \eqref{Cnd-1}.
Now  we estimate $f(u(\xbar))-f(u(\ybar))$ as follows: using the convexity of $f$ we get
\begin{align*}
f(u(\xbar))-f(u(\ybar))\leq - f'(u(\xbar)) (u(\ybar)-u(\xbar)).
\end{align*}
Since $u(\xbar)-u(\ybar)\geq \delta\varphi(|\xbar-\ybar|)=\delta |a|^\beta$ , we have
$$f(u(\xbar))-f(u(\ybar))\leq  f'(u(\xbar)) \delta |a|^\beta,$$
using the fact $f'\leq 0$.
Since $f'$ is increasing, using the fact that $u(\xbar)\leq C R^\gamma$, \eqref{ET1.6C} would give us
$$ \kappa_1 |a|^{\beta-2s} + \delta |a|^\beta |f'(C R^\gamma)|=\kappa_1 |a|^{2s-\beta} - \delta |a|^\beta f'(C R^\gamma)\leq C_1 R^{\gamma-2s}.$$
Applying Young's inequality with $p=2s$ and $p'=\frac{2s}{2s-1}$, to the left-hand side, we note that
$$\kappa_2 |a|^{\frac{\beta-2s}{2s}+ \frac{\beta(2s-1)}{2s}} |f'(C R^\gamma)|^{\frac{2s-1}{2s}}\leq C_1 R^{\gamma-2s},$$
for some constant $\kappa_2$. Since $\delta |a|^\beta\leq C R^\gamma $, we get 
$|a|^{\beta-1}\geq \kappa_3 R^{(\beta-1)\gamma}$, implying
$$ R^{2s-(2-\beta)\gamma}|f'(C R^\gamma)|^{\frac{2s-1}{2s}}\leq C_2$$
for some constant $C_2$, independent of $R$. This contradicts \eqref{Cnd-1} for a sequence of $R$ tending to infinity. The rest of the proof
follows as before.
\end{proof}

\section{Applications of the Liouville property to regularity}\label{S-appl}
In this section, we showcase application of above Liouville properties in regularity estimates. 

We start with interior regularity estimates for HJ equations with \textsl{critical} diffusion, in the sense that the nonlocal operator has a diffusion comparable to the one of the square root of the fractional Laplacian $\sqrt{-\Delta}$, that is, when $s=1/2$. 

In this setting, we can mention the contributions by Silvestre~\cite{Sil12b,Sil12a}, where the author considers operators of the form $\sqrt{-\Delta} u + b(x)\cdot \grad u=f$. Assuming 
$b, f\in L^\infty$, it is shown in \cite{Sil12a} that $u\in C^\alpha$ for some $\alpha\in (0, 1)$ whereas \cite{Sil12b} establishes $C^{1,\alpha}$
regularity of the solution assuming $C^\alpha$ regularity of $b$ and $f$. Later, Schwab and Silvestre \cite{SS16} prove $C^\alpha$ regularity 
of solutions of \eqref{ET3.1A} for some $\alpha$, dependent on $\lambda, \Lambda, C_2$ and $n$. In \cite[Theorem~3.1]{CGT}, the authors consider Pucci type nonlinear equation with H\"{o}lder continuous coefficients and obtain Lipschitz regularity of the solutions. It should be 
noted that \cite{CGT} deals with non-symmetric kernels. Theorem~\ref{T3.1} improves \cite{Sil12a,Sil12b,SS16} by proving $C^\gamma$ regularity 
for any $\gamma\in (0, 1)$ and complements the regularity result of \cite{CGT} as we do not impose any regularity hypothesis on 
the coefficients other than continuity.

To state our next result, we introduce the following state dependent operator corresponding to
$s=\frac{1}{2}$.
$$\mathscr{J} u(x)=\inf_{a\in\mathcal{A}}\,\sup_{b\in\mathcal{B}}\int_{\Rd} (u(x+z)-u(x)-1_B(z)z\cdot \grad u(x))\frac{k_{ab}(x, z)}{|z|^{n+1}}dz,$$
where $\mathcal{A}, \mathcal{B}$ are some index sets. We impose the following condition on the kernels $k_{ab}$:

\medskip

\noindent
{\bf (K)} \textsl{The family $k_{ab}$ is symmetric in the sense that $k_{ab}(z)=k_{ab}(-z)$.
$x\mapsto k_{ab}(x, z)$ continuous in $B_1$, uniformly with respect to $a, b, z$. That is, for any compact set $\mathcal{K}\subset B_1$,
there exists a modulus of continuity function $\varrho$ satisfying}
$$
|k_{ab}(x_1, z)-k_{ab}(x_2, z)|\leq \varrho(|x_1-x_2|)\quad \text{for all}\; x_1, x_2\in\mathcal{K}, \; a\in\mathcal{A},\, b\in\mathcal{B},\; z\in\Rn.
$$

\textsl{Furthermore, for some constants $0<\lambda\leq \Lambda$ we have $\lambda\leq k_{ab}\leq \Lambda$ for all $(a, b)\in\mathcal{A}\times\mathcal{B}$.}

\medskip

The next theorem is a consequence of~\ref{T1.1}. 
\begin{thm}\label{T3.1}
 Consider the operator $\mathscr{J}$ satisfying {\bf (K)}.
 Suppose that $H:\Rn\times\Rn\to \R$  be a continuous function satisfying the following:
 \begin{itemize}
\item[(i)] There exist constants $C_1, C_2$ satisfying
$$|H(x, p)|\leq C_1 + C_2 |p|\quad \text{for all}\; (x, p)\in B_1\times \Rn;$$
\item[(ii)] There exists a $\breve{H}$, Lipschitz in $p$, such that 
$$ \lim_{t\to 0+} t H(x, \frac{1}{t}p)=\breve{H}(x, p),$$
uniformly on compact subsets of $\Rn\times\Rn$.
 \end{itemize}
Then for any $\gamma\in (0, 1)$,  the viscosity solution $u$ to
\begin{equation}\label{ET3.1A}
 -\mathscr{J}u + H(x, u)= f\quad \text{in}\; B_1
\end{equation}
is in $C^\gamma(B_{\frac{1}{2}})$, and the $C^\gamma$ norm depends only on $\lambda, \Lambda, n, \gamma, \sup_{B_1}\abs{u}, \norm{u}_{L^1(\omega_{\nicefrac{1}{2}})}, \norm{f}_\infty$ and
modulus of continuity $\varrho$.
\end{thm}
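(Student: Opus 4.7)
The plan is to argue by contradiction via a blow-up/compactness procedure, with Theorem~\ref{T1.1} playing the role of the Liouville ingredient. Suppose that the claimed interior $C^\gamma$ estimate fails. Then one can extract a sequence of viscosity solutions $u_k$ of equations of the form~\eqref{ET3.1A}---with operators $\mathscr{J}_k$, Hamiltonians $H_k$, and right-hand sides $f_k$ all satisfying the quantitative hypotheses uniformly---together with pairs $x_k, y_k \in \bar B_{1/2}$ realizing (up to a factor $2$) the seminorm $L_k := [u_k]_{C^\gamma(B_{1/2})} \to +\infty$. Set $r_k := |x_k - y_k|$ and $M_k := L_k r_k^\gamma$; the uniform $L^\infty$ bound on $u_k$ forces $r_k \to 0$ and $M_k \leq 4\|u_k\|_\infty$, while by construction the rescaled functions
\[
v_k(z) := \frac{u_k(x_k + r_k z) - u_k(x_k)}{M_k}, \quad z \in \Rn,
\]
satisfy $v_k(0) = 0$ and $|v_k(e_k)| \geq 1/2$ for unit vectors $e_k := (y_k - x_k)/r_k$, so they are non-trivial at unit scale.

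The next step is to verify the uniform estimates needed to pass to the limit. On the expanding domain $\Omega_k := (B_{1/2} - x_k)/r_k$ the H\"{o}lder seminorm control yields $|v_k(z) - v_k(w)| \leq |z - w|^\gamma$, while outside $\Omega_k$ the $L^\infty$ bound on $u_k$ combined with $|z| \gtrsim 1/r_k$ and the identity $M_k = L_k r_k^\gamma$ gives the global growth
\[
|v_k(z)| \leq C(1 + |z|)^\gamma, \quad z \in \Rn.
\]
Using the $1$-homogeneity of $\mathscr{J}_k$ (order $2s = 1$), a direct scaling computation shows that $v_k$ solves, in the viscosity sense,
\[
-\widehat{\mathscr{J}}_k v_k(z) + t_k H_k\bigl(x_k + r_k z,\, t_k^{-1} \nabla v_k(z)\bigr) = t_k f_k(x_k + r_k z),
\]
where $t_k := r_k/M_k \to 0$ and the rescaled operator $\widehat{\mathscr{J}}_k$ has kernels $k_{ab}^{(k)}(x_k + r_k z,\, r_k \eta)/|\eta|^{n+1}$, hence remains elliptic with respect to $\mathcal{K}_{\rm sym}(\lambda, \Lambda)$. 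Arzel\`a--Ascoli then produces a locally uniform limit $v \in C(\Rn)$ along a subsequence; by hypothesis (ii) the Hamiltonian term converges locally uniformly to $\breve H(x_\infty, \nabla v(z))$, the right-hand side vanishes, and the stability theory for nonlocal viscosity solutions (together with a compactness extraction from $\widehat{\mathscr J}_k$ producing a limit operator $\mathscr{J}_\infty$ still elliptic with respect to $\mathcal{K}_{\rm sym}$) yields that $v$ is a viscosity solution of
\[
-\mathscr{J}_\infty v + \breve H(x_\infty, \nabla v) = 0 \quad \text{in } \Rn.
\]

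At this stage $v$ is non-constant (inheriting $v(0) = 0$ and $|v(e)| \geq 1/2$ for some unit vector $e$, a limit point of $\{e_k\}$), uniformly $\gamma$-H\"{o}lder on $\Rn$ with growth bounded by $C(1 + |z|)^\gamma$ and $\gamma < 1 = 2s$, while $p \mapsto \breve H(x_\infty, p)$ is (globally) Lipschitz. Applying Theorem~\ref{T1.1} for $s = 1/2$ (either subcase applies) forces $v$ to be constant, which is the desired contradiction. The main obstacle is the proper implementation of the blow-up: the scales $r_k, M_k$ must be chosen so that the rescaled sequence is simultaneously non-trivial at unit scale and controlled globally by $(1 + |z|)^\gamma$, and one must pass to the limit both in the nonlocal operator---where the kernels $k_{ab}^{(k)}(x_k + r_k z,\, r_k \eta)$ depend on the rescaling through the second argument, requiring a compactness extraction within $\mathcal{K}_{\rm sym}$---and in the Hamiltonian, where hypothesis (ii) is precisely what absorbs the otherwise singular scaling $M_k/r_k \to +\infty$ in the gradient.
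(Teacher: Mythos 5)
Your overall strategy is the same as the paper's: blow up to a limit equation with a Lipschitz Hamiltonian $\breve H(x_\circ,\cdot)$, invoke the $s=\nicefrac 12$ case of Theorem~\ref{T1.1} to show the limit is constant, and contradict the unit-scale non-triviality of the rescaled sequence. The scaling computation, the role of hypothesis~(ii) in producing $\breve H$, and the compactness extraction of the operator within $\mathcal{K}_{\rm sym}$ are all correctly identified. However, there are two genuine gaps in the implementation of the blow-up.

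First, you normalize by $L_k := [u_k]_{C^\gamma(B_{1/2})}$ and claim $[v_k]_{C^\gamma(\Omega_k)}\le 1$, but this seminorm is not known to be finite a priori: the hypothesis only gives $u\in L^\infty(B_1)\cap L^1(\omega_{1/2})$, and the a priori regularity of Schwab--Silvestre only delivers $C^\alpha$ for some $\alpha$ that may be smaller than $\gamma$. Picking a pair $(x_k,y_k)$ "realizing" an infinite seminorm up to factor two is not meaningful, and the uniform $\gamma$-H\"older control you lean on for Arzel\`a--Ascoli and for the global growth bound does not come for free. The paper avoids this by replacing the seminorm with the monotone quantity $\Theta(r)$, built from $L^\infty$-oscillations over balls of radius $\ge r$ (always finite), which gives $\|\tilde v_m\|_{L^\infty(B_R)}\le R^\gamma$ directly, and then supplies the equicontinuity of the blown-up sequence via \cite[Theorem~7.2]{SS16} rather than from a H\"older bound.

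Second, you never cut off. You write that "the $L^\infty$ bound on $u_k$'' gives $|v_k(z)|\le C(1+|z|)^\gamma$ outside $\Omega_k$, but the hypothesis only controls $\sup_{B_1}|u|$ and $\|u\|_{L^1(\omega_{1/2})}$; outside $B_1$, $u$ need not be bounded, so the far-field values of $v_k(z)=\big(u_k(x_k+r_kz)-u_k(x_k)\big)/M_k$ are not pointwise controlled. This breaks the global growth estimate and makes the stability step (where the nonlocal operator on $v_k$ sees the tail) delicate. The paper first passes to $v=\xi u$ with a cutoff $\xi$ supported in $B_1$; this makes $v$ globally bounded (at the cost of an $O(\|u\|_{L^1(\omega_{1/2})})$ error on the right-hand side, harmless after rescaling), and it is $v$, not $u$, that is blown up. Both issues must be fixed before the compactness argument closes.
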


\begin{proof}
We follow the idea of Serra \cite{Ser15} (see also \cite[section~5]{BK23}). Firstly, consider a smooth cut-off function $\xi:\Rn\to [0, 1]$
such that $\xi(y)=1$ for $|y|\leq \frac{7}{8}$ and $\xi(y)=0$ for $|y|\geq 1$. Letting $v=\xi u$ we see that
\begin{equation}\label{ET3.1AA}
f - C\Lambda \norm{u}_{L^1(\omega_s)}\leq  -\mathscr{J}v + H(x, v)\leq f + C\Lambda \norm{u}_{L^1(\omega_s)} \quad \text{in}\; B_{\frac{3}{4}},
\end{equation}
for some constant $C$.
Dividing both side by $\tau:=1+\norm{u}_{L^\infty(B_1)}+   \norm{u}_{L^1(\omega_{\nicefrac{1}{2}})} + \norm{f}_{L^\infty(B_1)}>1$, and replacing 
$v$ by $\frac{v}{\tau}$ and $H$ by
$\frac{1}{\tau}H(x, \tau p)$
(which also satisfy the above condition with the same $C_1, C_2$)
we may assume that $\norm{u}_{L^\infty(B_1)}+   \norm{u}_{L^1(\omega_{\nicefrac{1}{2}})} + \norm{f}_{L^\infty(B_1)}\leq 1$.

Set $\gamma\in (0, 1)$. We prove the theorem by contradiction. Suppose that there exists a family of nonempty index sets $\mathcal A_k, \mathcal B_k$ (with kernels satisfying the continuity assumption {\bf (K)} uniformly in $k$), and a sequence of $u_k, f_k$ solving
$$-\mathscr{J}u_k + H(x, \grad u_k)=f_k \quad \text{in}\; B_1$$
such that
$$
\lim_{k\to\infty} \sup_{x,y\in B_{\frac{1}{2}}}\frac{|u_k(x)-u_k(y)|}{|x-y|^\gamma}=\infty.
$$
Let $v_k=\xi u_k$, where $\xi$ is defined above, and we consider the equation \eqref{ET3.1AA} satisfied by $v_k$.
As in~\cite[Lemma~ 4.3]{Ser15}, the above is equivalent to
\begin{equation}\label{T1.2A}
    \sup_k \, \sup_{r>0}\, \sup_{z\in B_{\frac{1}{2}}}\,
r^{-\gamma} \norm{v_k(\cdot)-v_k(z)}_{L^\infty(B_r(z))}=+\infty.    
\end{equation}

For $r > 0$, define 
$$
\Theta(r)=  \sup_k \, \sup_{r_1\geq r}\, \sup_{z\in B_{\frac{1}{2}}}\,
r_1^{-\gamma} \norm{v_k-v_k(z)}_{L^\infty(B_{r_1}(z))},
$$
which is nondecreasing as $r \to 0$, and $\Theta(r) \to +\infty$ as $r \to 0$.

Now choose a sequence $(z_m, k_m, r_m) \in B_{1/2} \times \mathbb{N} \times (0,1)$ with $r_m\to 0$ such that
$$r_m^{-\gamma} \norm{v_{k_m}-v_{k_m}(z_m)}_{L^\infty(B_{r_m}(z_m))}
> \frac{\Theta(r_m)}{2},
$$
and define
$$\tilde{v}_m(y)=\frac{v_{k_m}(z_m + r_m y)-v_{k_m}(z_m)}{r_m^{\gamma}\Theta(r_m)}.
$$

Then, the following can be easily verified 
\begin{equation}\label{T1.2B}
 \tilde{v}_m(0)=0, \quad \norm{\tilde{v}_m}_{L^\infty(B_1)}\geq 1/2, \quad
\quad |\tilde{v}_m(y)|\leq \kappa (1+|y|)^\gamma, \quad y \in \R^n,
\end{equation}
for some constant $\kappa$, independent of $m$. The first two facts are direct from the definition and the choice of $r_m$. For the last inequality, by the monotonicity property of 
$\Theta$ it follows that
$$
\norm{\tilde{v}_m}_{L^\infty(B_R)}\leq R^\gamma \frac{\Theta(Rr_m)}{\Theta(r_m)}\leq R^\gamma.
$$
Moreover, defining
$$
-\mathscr{J}_m \phi=\inf_{a\in\mathcal{A}_{k_m}}\sup_{b\in\mathcal{B}_{k_m}}\int_{\Rn}(\phi(x+z)-\phi(x)-1_{B}(z) z\cdot\grad \phi(x))\frac{k_{ab}(z_m+r_mx, r_m z)}{|z|^{n+1}}\dz,
$$
we see from \eqref{ET3.1AA} that
$$\left|- \mathscr{J}_m v_m
 + \frac{r^{1-\gamma}_m}{\Theta(r_m)} H(z_m + r_m x, r^{\gamma -1}_m\Theta(r_m)\grad v_m)\right|\leq  \frac{r^{1-\gamma}_m}{\Theta(r_m)} |f(z_m+r_m x)| + C\Lambda \frac{r^{1-\gamma}_m}{\Theta(r_m)}
 \quad \text{in}\; B_{\frac{1}{2r_m}}.$$
Due to compactness, we shall assume without any loss of generality that $z_m\to z_\circ\in \bar{B}_{\frac{1}{2}}$, as $m\to\infty$.
One can also extract a subsequence $m_k$ such that
$$\mathscr{J}_{m_k}\to \tilde{\mathscr{J}}$$
weakly (see \cite[Lemma~5.4]{BK23}), as $m_k\to\infty$, where $\tilde{\mathscr{J}}$ is a translation invariant, positively $1$-homogeneous operator which is elliptic with respect to $\mathcal{K}_{\rm sym}$.
Since 
$$\left|\frac{r^{1-\gamma}_m}{\Theta(r_m)} H(z_m + r_m x, r^{\gamma -1}_m\Theta(r_m)\grad v_m)\right|
\leq C_1 + C_2 |\grad v_m|,
$$
applying \cite[Theorem~7.2]{SS16} we see that $\{\tilde{v}_m\}$ is locally H\"{o}lder continuous, uniformly in $m$. Thus, 
we can extract a subsequence of $\tilde{v}_k$ converging to $w$ so that 
$$-\tilde{\mathscr{J}} w + \breve{H}(z_\circ, \grad w) =0\quad \text{in}\; \Rn,$$
and, due to \eqref{T1.2B}, we also have
\begin{equation}\label{T1.2C}
 w(0)=0, \quad \norm{w}_{L^\infty(B_1)}\geq 1/2, \quad
\quad |w(y)|\leq \kappa (1+|y|)^\gamma.
\end{equation}
 Applying Theorem~\ref{T1.1}, we see that
$w$ is a constant, but this contradicts the first two criteria in \eqref{T1.2C}. Hence \eqref{T1.2A} can not hold.
This completes the proof.
\end{proof}

Our next result is an application of Theorem~\ref{T1.6}. To understand the result, let us consider the following equation
$$-\cI_{\rm sym} = u^{-\delta}\quad \text{in}\; \Omega, \quad u>0\quad \text{in}\; \Omega,\quad \text{and} \quad u=0\quad \text{in}\; \Omega^c,$$
in a smooth bounded domain $\Omega$. Besides the existence-uniqueness of the solutions, the regularity of the solutions up to the boundary
has been of special interest. For the classical case of Laplacian we refer to \cite{Gui-Lin93,LM91}. A similar problem for the nonlocal operator 
is considered recently in \cite{AGS18,BBMP} for $-\cI_{\rm sym}=(-\Delta)^s$. More precisely, it is shown in \cite{AGS18} that 
$u\in C^{s}(\bar\Omega)$ for $\delta<1$ and $u\in C^{\frac{2s}{\delta+1}}(\bar\Omega)$ for $\delta>1$. The proof techniques rely on the 
Green function representation of the solutions and sharp boundary regularity of the Green function. As can be seen from the proofs of 
\cite{AGS18,Gui-Lin93}, the boundary regularity of $u$ is {\it dictated} by the boundary regularity of the torsion function or the principal eigenfunction. 
Interestingly, for the operator $\cI_{\rm sym}$ one may not expect a sharp boundary behaviour of the Dirichlet solution as point out by
Ros-Oton and Serra in \cite{RS-Duke}. Our next result provides some insight on the regularity of the solution, compare with the "torsion problem" when $\delta$ is large.

\begin{thm}
Let $u$ be a positive viscosity solution to 
$$-\cI_{\rm sym} u + 1_{(1, 2)}(s)H(\grad u) = \frac{k(x)}{u^\delta} \quad \text{in}\; \Omega, \quad u>0\quad \text{in}\; \Omega,\quad\text{and}\quad
u=0\quad \text{in}\; \Omega^c,$$
for some $\delta>0$ where $k\in C(\bar{\Omega})$ and $k>0$ in $\bar\Omega$. $H$ is positively 1-homogeneous and locally Lipschitz. 
Let $\psi$ be the solution to the torsion problem
$$-\cI_{\rm sym} \psi + 1_{(1, 2)}(s) H(\grad \psi)= 1\quad \text{in}\; \Omega,\quad \psi>0\quad \text{in}\; \Omega,\quad \text{and}
 \quad u=0\quad \text{in}\; \Omega^c,$$
and for some $0<\beta<1\wedge 2s$ we have
$$(\dist(x, \Omega^c))^\beta\leq C\psi(x) \quad \text{in}\; \Omega,$$
for some constant $C$.
If $\beta(1+\delta)>2s$, then we have
$$(\dist(x, \Omega^c))^\beta\leq C_1 u(x),\quad\text{and}\quad \sup_{x\in\Omega}\,\frac{u(x)}{(\dist(x, \Omega^c))^\beta}=\infty.$$
In particular,  $u\notin C^\beta(\bar\Omega)$.
\end{thm}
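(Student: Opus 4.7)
Throughout, write $d(x) = \dist(x, \Omega^c)$ for brevity. The plan is to prove the two claimed bounds separately, and then observe that $u \notin C^\beta(\bar \Omega)$ follows immediately from the second one, since $u \equiv 0$ on $\partial \Omega$ would force $u(x) \leq [u]_{C^\beta(\bar\Omega)}\, d(x)^\beta$ for every $x \in \Omega$, contradicting $\sup u/d^\beta = +\infty$.

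For the lower bound $u \geq C_1 d^\beta$, I will compare $u$ with a small multiple $c\psi$ of the torsion function. Using that $\cI_{\rm sym}$ and $H$ are positively $1$-homogeneous, the torsion equation gives
\[
-\cI_{\rm sym}(c\psi) + 1_{(1,2)}(s)\, H(\grad (c\psi)) = c \quad \mbox{in}\ \Omega.
\]
Choosing $c>0$ with $c^{1+\delta}\|\psi\|_\infty^\delta \leq \min_{\bar \Omega} k$ makes $c\psi$ a viscosity subsolution of the $u$-equation. Since $u \equiv c\psi \equiv 0$ on $\Omega^c$, a standard doubling-variable comparison argument, exploiting the strict monotonicity of $r \mapsto k(x)/r^\delta$, yields $u \geq c\psi$ in $\Omega$; combined with $d^\beta \leq C\psi$, this gives $u \geq (c/C) d^\beta$.

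For the supremum claim I argue by contradiction: suppose $u \leq C_2 d^\beta$, so that $C_1 d^\beta \leq u \leq C_2 d^\beta$. Choose $x_n \in \Omega$ with $d_n := d(x_n) \to 0$, set $M_n := u(x_n)$ and, crucially,
\[
\lambda_n := d_n^{\beta(1+\delta)/(2s)}.
\]
The hypothesis $\beta(1+\delta) > 2s$ yields $\lambda_n / d_n \to 0$. Define $v_n(y) := M_n^{-1}\, u(x_n + \lambda_n y)$: the two-sided bound on $u$ gives $v_n(0)=1$ and
\[
\frac{C_1}{C_2 \cdot 2^\beta} \leq v_n(y) \leq \frac{C_2}{C_1}\Bigl(1 + \frac{\lambda_n |y|}{d_n}\Bigr)^\beta \quad \mbox{for}\ |y| \leq \frac{d_n}{2\lambda_n},
\]
so on each fixed ball $B_R \subset \R^n$ the family $\{v_n\}$ is uniformly bounded above and below by positive constants once $n$ is large. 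A direct computation with the rescaled symmetric kernels $\tilde K(w) = \lambda_n^{n+2s} K(\lambda_n w)$, and the $1$-homogeneity of $\cI_{\rm sym}$ and $H$, shows
\[
-\cI_{\rm sym}^{(n)} v_n + 1_{(1,2)}(s)\, \lambda_n^{2s-1}\, H(\grad v_n) = \frac{k(x_n + \lambda_n y)\, \lambda_n^{2s}}{M_n^{1+\delta}\, v_n^\delta} \quad \mbox{in}\ \Omega_n := \lambda_n^{-1}(\Omega - x_n),
\]
where $\cI_{\rm sym}^{(n)}$ remains elliptic with respect to $\mathcal K_{\rm sym}$. The tailored choice of $\lambda_n$ keeps $\lambda_n^{2s}/M_n^{1+\delta}$ pinched between positive constants, while $\lambda_n^{2s-1} \to 0$ in the case $s>1/2$.

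Interior H\"older estimates for nonlocal Pucci-type equations with bounded right-hand side apply uniformly to $\{v_n\}$ on compact sets, since the singular nonlinearity is tame where $v_n$ stays uniformly away from $0$ and $+\infty$. Passing to a subsequence, $v_n \to v_\infty$ locally uniformly, and a compactness argument within the Pucci class (in the spirit of \cite[Lemma~5.4]{BK23}) produces a limit operator $\cI_\infty$ elliptic with respect to $\mathcal K_{\rm sym}$ such that $v_\infty$ is a bounded positive viscosity solution of $-\cI_\infty v_\infty = k_\infty/v_\infty^\delta$ in $\R^n$ for some $k_\infty > 0$, with $v_\infty(0)=1$ and $0 < \inf v_\infty \leq \sup v_\infty < \infty$. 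Theorem~\ref{T1.6}(i) now applies with $f(r) = k_\infty r^{-\delta}$ (continuous, strictly decreasing on $(0,\infty)$) and growth exponent $\gamma = 0 < 1\wedge 2s$, forcing $v_\infty$ to be constant; but then the equation would read $0 = k_\infty/v_\infty^\delta > 0$, a contradiction. The main technical obstacle is precisely this blow-up step, requiring both a uniform interior H\"older estimate for $\{v_n\}$ and stability of the Pucci class under the operator rescaling; the two-sided bound $u \asymp d^\beta$ from the contradiction hypothesis, together with the vanishing gradient coefficient $\lambda_n^{2s-1}$ when $s>1/2$, are what make both ingredients available.
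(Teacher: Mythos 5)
Your proof matches the paper's argument in all essential respects: the lower bound $u \geq C_1 d^\beta$ comes from comparison with a small multiple of the torsion function $\psi$, and the supremum claim is obtained by blowing up at a boundary sequence with the spatial scale $\lambda_n = d_n^{\beta(1+\delta)/(2s)}$ (where $\beta(1+\delta)>2s$ guarantees $\lambda_n\ll d_n$, so the rescaled domains exhaust $\R^n$), yielding a bounded entire positive solution of $-\cI_\infty w = c/w^\delta$ in $\R^n$ which contradicts Theorem~\ref{T1.6}(i). The one presentational difference is the first step: the paper notes that $u$ is a supersolution of the \emph{torsion} equation with constant right-hand side $c_k/\|u\|_\infty^\delta$ and compares it with the exact solution $(c_k/\|u\|_\infty^\delta)\psi$ via \cite{BCI08}, thereby invoking comparison only for the benign constant-rhs problem, whereas you compare $c\psi$ and $u$ directly in the equation with the singular nonlinearity $k/v^\delta$, which requires a bit more care near the boundary where both functions vanish.
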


\begin{proof}
To simplify the notation, we write $1_{(1, 2)}(s) H(p)= H_s(p)$.
The first part follows from the comparison principle. More precisely, let  $k\geq c_k$ in $\bar\Omega$, for some positive constant $c_k$. Then
$$ -\cI_{\rm sym} u + H_s(\grad u) \geq  \frac{c_k}{\norm{u}^\delta_\infty}\quad \text{in}\; \Omega.$$
By comparison principle \cite{BCI08}, we have 
$$\frac{c_k}{\norm{u}^\delta_\infty} \psi\leq u\quad \text{in}\; \Rd.$$
This gives the first part. Now we consider the second part. Suppose, on the contrary, that
\begin{equation}\label{E1.9}
   \sup_{x\in\Omega}\,\frac{u(x)}{(\dist(x, \Omega^c))^\beta}=\kappa\in (0, \infty). 
\end{equation}
In particular, we have from \eqref{E1.9} that
$$u(x)\leq \kappa (\dist(x, \Omega^c))^\beta.$$
Consider a sequence of points $\{x_n\}$ in $\Omega$ approaching the boundary. Let $r_n=\dist(x_n, \Omega^c)$.
Set $\gamma=\frac{\beta(1+\delta)}{2s}\in (1, \infty)$ and define
$$w_n(y)=\frac{1}{r^\beta_n} u(x_n + r^\gamma_n y),\quad y\in\Rd.$$
We claim that
\begin{equation}\label{E1.10}
|w_n(y)|\leq \kappa_1 (1+|y|^\beta)\quad \text{for}\; y\in \Rd,
\end{equation}
for some $\kappa_1$, independent of $n$. Note that, for $\abs{y}^\beta\geq \norm{u}_\infty$, we have 
$u(x_n+y)\leq \abs{y}^\beta$. Again, for $\abs{y}^\beta\leq \norm{u}_\infty$, we have
$$\dist(x_n+y, \Omega^c)\leq \dist(x_n, \Omega^c) +\abs{y}\leq r_n + |y|.$$
Applying \eqref{E1.9}, we have $u(x_n+y)\leq \kappa (r_n + |y|)^\beta\leq \kappa (r^\beta_n + |y|^\beta)$. Since $\beta<1<\gamma$,
\eqref{E1.10} follows.

On the other hand, since $2s\gamma=\beta(1+\delta)$,
\begin{align*}
-\cI_{\rm sym} w_n + r_n^{(2s-1)\gamma}H_s(\grad w_n)
&= r^{2s\gamma-\beta-\delta \beta}_n
\frac{k(x_n+r_n^\gamma y)}{(w_k(y))^\delta}
\\
&=
\left[\frac{r^{\beta\delta}_n}{(\dist(x_n+r_n^\gamma y)^{\beta\delta}}\right]
k(x_n+r_n^\gamma y)\left[\frac{(\dist(x_n+r_n^\gamma y)^{\beta}}{u(x_n + r^\gamma_n y)}\right]^\delta.
\end{align*}
Since from the first part $w_k$ is uniformly bounded from below on every compact set $K$ for large $n$, 
we have $\abs{k(x_n+r_n^\gamma y)/w^\delta_k}$ bounded. Also, 
$$\left|\frac{\dist(x_n+r_n^\gamma y)}{r_n}-1\right|=\left|\frac{\dist(x_n+r_n^\gamma y)-\dist(x_n,\Omega^c)}{r_n}\right| \leq r_n^{\gamma-1}|y|\to 0,$$
uniformly on compacts, as $n\to\infty$. Thus, using \eqref{E1.10} and standard regularity theory \cite{CS09,SS16}, we see that $\{w_n\}$ is locally
H\"{o}lder continuous, uniformly in $n$. Let $w_{n_k}\to w$, along some sub-sequence. It is evident that $w>0$
and $|w|\leq \kappa_1 (1+|y|^\beta)$. Furthermore, from the stability property of viscosity solution, we obtain
$$-\cI_{\rm sym} w = \frac{c}{w^\delta}\quad \text{in}\; \Rd,$$
where $c=\lim_{n_k\to\infty}k(x_{n_k})>0$, which we can assume to exists or chose a further subsequence. Now applying Theorem~\ref{T1.6} we see that such $w$ can not exist. Hence \eqref{E1.9} cannot hold. This completes the proof.
\end{proof}

\subsection*{Acknowledgement}
The authors thank the referee for his/her valuable comments.
Part of this project was done during a visit of A.B. and E.T. at Departamento de Matemáticas (DM) of Universidad T\'ecnica Federico Santa Mar\'{i}a at Valparaíso. The kind hospitality of the department is acknowledged. This research of A.B. is partly supported by a SwarnaJayanti fellowship SB/SJF/2020-21/03. A.Q. was partially supported by Fondecyt Grant 1231585. E.T. was supported by Fondecyt Grant 1201897, and CNPq Grants 408169 and 306022.


\begin{thebibliography}{77}
\bibitem{AGS18} Adimurthi, J. Giacomoni, and S. Santra. Positive solutions to a fractional equation with singular nonlinearity,
J. Differential Equations 265, no.4, 1191–1226, 2018

\bibitem{AGQ} S. Alarc\'on, J. Garc\'ia-Meli\'an, and A. Quaas. 
Nonexistence of positive supersolutions to some nonlinear elliptic problems,
 J. Math. Pures Appl. (9), 99(5):618–634, 2013

\bibitem{ATEJ} N. Alibaud, F. del Teso, J. Endal, and E. R. Jakobsen. The Liouville theorem and linear operators satisfying the maximum principle, J. Math. Pures Appl. (9) 142 , 229--242, 2020

\bibitem{AS}	N. Armstrong,  S.  Boyan,  Nonexistence of positive supersolutions of elliptic equations via the maximum principle,  Communications in Partial Differential Equations. 36 (2011), 2011-2047

\bibitem{Barles91}
G. Barles {\em A Weak Bernstein Method for Fully Nonlinear Elliptic Equations.} Diff. and Integral Equations, 4(2): 241-262, 1991.

\bibitem{Barles-12} G. Barles, E. Chasseigne, A. Ciomaga, and C. Imbert.
 Lipschitz regularity of solutions for mixed integro-differential equations,
  J. Differential Equations, 252(11):6012--6060, 2012


\bibitem{BCI08} G. Barles, E. Chasseigne, and C. Imbert. 
On the Dirichlet problem for second-order elliptic integro-differential equations. Indiana Univ. Math. J. 57(1), 213--246, 2008



\bibitem{BDGQ} B. Barrios, L. Del Pezzo, J. Garc\'ia-Meli\'an, and A. Quaas. 
A priori bounds and existence of solutions for some nonlocal elliptic problems,
 Rev. Mat. Iberoam. 34, 195--220, 2018

\bibitem{BD20} B. Barrios and L. M. Del Pezzo.
Study of the existence of supersolutions for nonlocal equations with gradient terms,
Milan J. Math. 88, no.2, 267--294, 2020

\bibitem{BBMP} B. Barrios, I. De Bonis, M. Medina, and I. Peral. 
Semilinear problems for the fractional laplacian with a singular nonlinearity,
 Open Math., 13, 390--407, 2015

\bibitem{Biswas} A. Biswas.
Liouville type results for systems of equations involving fractional Laplacian in exterior domains,
 Nonlinearity 32, 2246--2268, 2019

\bibitem{BT23} A. Biswas and E. Topp.
 Nonlocal ergodic control problem in $\mathbb{R}^d$,
Math. Annalen 390, 45--94, 2024

\bibitem{BK23} A. Biswas and S. Khan.
 Existence-Uniqueness for nonlinear integro-differential equations with drift in $\R^d$, 
SIAM J. Math. Anal. 55 (5), 4378--4409, 2023 


\bibitem{BS22} D. Berger and R. L. Schilling.
On the Liouville and strong Liouville properties for a class of non-local operators,
 Math. Scand. 128(2): 365--388, 2022

\bibitem{BGV} M-F. Bidaut-V\'eron, M. Garc\'ia-Huidobro, and L. V\'eron.
Estimates of solutions of elliptic equations with a source reaction term involving the product of the function and its gradient,
Duke Math. J. 168, no.8, 1487--1537, 2019

\bibitem{CDV} X. Cabr\'e, S. Dipierro, and E. Valdinoci.
 The Bernstein technique for integro-differential equations,
Arch. Ration. Mech. Anal., 243(3):1597--1652, 2022

\bibitem{CS09}
L. Caffarelli and L. Silvestre.
\newblock Regularity theory for fully nonlinear integro-differential equations,
Comm. Pure Appl. Math., 62(5):597--638, 2009

\bibitem{CGM24} F. Camilli, A. Goffi, and C. Mendico. Quantitative and qualitative properties for Hamilton-Jacobi PDEs via the nonlinear adjoint method. Ann. Sc. Norm. Super. Pisa Cl. Sci, 2024

\bibitem{CDLP10} I.Capuzzo Dolcetta, F.Leoni ,and A.Porretta. H\"{o}lder estimates for degenerate elliptic equations with coercive Hamiltonians. Trans. Am. Math. Soc., 362(9):4511--4536, 2010

\bibitem{CDM08} G. Caristi, L. D'Ambrosio and E. Mitidieri. Liouville theorems for some nonlinear inequalities.
Tr. Mat. Inst. Steklova 260, 2008

\bibitem{CM97} G. Caristi and E. Mitidieri. Nonexistence of positive solutions of quasilinear equations. Adv. Differential Equations, 2(3):319--359, 1997


\bibitem{CL09} W. Chen and C. Li.
 An integral system and the Lane-Emden conjecture, 
Discret. Contin. Dyn. Syst. 24, 1167–1184, 2009


\bibitem{CGT} A. Ciomaga, D. Ghilli, and E. Topp.
Periodic homogenization for weakly elliptic Hamilton-Jacobi- Bellman equations with critical fractional diffusion,
Comm. Partial Differential Equations, 47(1):1-38, 2022

\bibitem{CG21} M. Cirant and A. Goffi.
On the Liouville property for fully nonlinear equations with superlinear first-order terms,
"Proceedings of the Conference on Geometric and Functional Inequalities and Recent Topics in Nonlinear PDEs", Contemporary Mathematics, American Mathematical Society,  781 (2023)

\bibitem{CV12} P. Constantin and V. Vicol. Nonlinear maximum principles for dissipative linear nonlocal operators and applications,  Geometric and Functional Analysis
Volume 22, 1289--1321, 2012

\bibitem{CL00} A. Cutr\`i and F. Leoni.
On the Liouville property for fully nonlinear equations, 
Ann. Inst. H. Poincar\'e Anal. Non Lin\'eaire 17 219–245, 2000

\bibitem{DSV19} S. Dipierro, O. Savin, E. Valdinoci, Definition of fractional Laplacian for functions
with polynomial growth, Rev. Mat. Iberoam. 35, 1079-1122, 2019

\bibitem{Fall16} M. M. Fall.
Entire s-harmonic functions are affine,
Proc. Amer. Math. Soc.144, no.6, 2587--2592, 2016

\bibitem{FW16} M. M. Fall and T. Weth.
Liouville theorems for a general class of nonlocal operators,
Potential Anal. 45, no.1, 187--200, 2016

\bibitem{FS11} A. Farina and J. Serrin,
Entire solutions of completely coercive quasilinear elliptic equations II, 
J. Differential Equations 250, no. 12, 4409--4436, 2011

\bibitem{FQ}P. Felmer,  A. Quaas, Fundamental solutions and Liouville type theorems for nonlinear integral operators,  Adv.  Math. 226 (2011), 2712-2738.

\bibitem{FRRO} X. Fern\'{a}ndez-Real and X. Ros-Oton, 
Integro-differential elliptic equations, Progr. Math., 350, Birkhäuser/Springer, Cham, [2024],


\bibitem{FPS20} R. Filippucci, P. Pucci, and P. Souplet.
A Liouville-Type Theorem for an Elliptic Equation with Superquadratic Growth in the Gradient,
 Adv. Nonlinear Stud.  20(2): 245--251, 2020

\bibitem{FPR} R. Filippucci, P. Pucci, and M. Rigoli.
Nonlinear weighted p-Laplacian elliptic inequalities with gradient terms,
 Commun. Contemp. Math. 12 ,501--535, 2010

\bibitem{F09} R. Filippucci.
Nonexistence of positive weak solutions of elliptic inequalities,
Nonlinear Anal. ,2903--2916, 2009
 
 \bibitem{Goffi} A. Goffi.
 A priori Lipschitz estimates for nonlinear equations with mixed local and nonlocal diffusion via the adjoint-Bernstein method,
 Boll Unione Mat Ital, 2022. https://doi.org/10.1007/s40574-022-00340-w

\bibitem{Gui-Lin93} C. Gui and F. Lin.
Regularity of an elliptic problem with a singular nonlinearity,
Proc. Roy. Soc. Edinburgh, Sec. A, 123, 1021--1029, 1993


\bibitem{LM91} A. C. Lazer and P. J. McKenna.
On a singular nonlinear elliptic boundary-value problem,
 Proc. Amer. Math. Soc., 111, 721--730, 1991
 
\bibitem{IL90} H. Ishii and P. L. Lions.
Viscosity solutions of fully non-linear second-order elliptic partial differential equations, 
J. Differential Equations, 83, No.1, 26--78, 1990

\bibitem{Lions} P. L. Lions. 
Quelques remarques sur les probl\`{e}mes elliptiques quasilin\'{e}aires du second ordre,
 J. Anal. Math. 45, 234--254, 1985

\bibitem{MP04} E. Mitidieri and S. I. Pohozaev. Towards a unified approach to nonexistence of solutions for a
class of differential inequalities. Milan J. Math., 72, 129--162, 2004
 
 \bibitem{PS78} L. A. Peletier and J. Serrin.
  Gradient bounds and Liouville theorems for quasilinear elliptic equations,
   Ann. Scuola Norm. Sup. Pisa Cl. Sci. (4) 5, no. 1, 65--104, 1978

\bibitem{PP13} A. Porretta and E. Priola. Global Lipschitz regularizing effects for linear and nonlinear parabolic equations. J. Math. Pures Appl. (9), 100(5):633--686, 2013
 
 \bibitem{QX16} A. Quaas and A. Xia.
  A Liouville type theorem for Lane-Emden systems involving the fractional
Laplacian, Nonlinearity 29, 2279--2297, 2016

\bibitem{RS01} M. Rigoli and A. G. Setti. Liouville type theorems for $\phi$-subharmonic functions. Rev. Mat. Iberoamericana, 17(3):471--520, 2001

\bibitem{RS-Duke} X. Ros-Oton and J. Serra. 
Boundary regularity for fully nonlinear integro-differential equations,
Duke Math. J.165, no.11, 2079--2154, 2016

\bibitem{RS16} X. Ros-Oton and J. Serra.
Regularity theory for general stable operators,
J. Differential Equations 260, no.12, 8675--8715, 2016

\bibitem{SS16} R. W. Schwab and L. Silvestre.
 Regularity for parabolic integro-differential equations with very irregular kernels,
  Anal. PDE, 9(3):727--772, 2016

\bibitem{Ser15} J. Serra.
Regularity for fully nonlinear nonlocal parabolic equations with rough kernels,
Calc. Var. Partial Differential Equations, 54(1):615--629, 2015


\bibitem{Sil12a} Luis Silvestre.
On the differentiability of the solution to an equation with drift and fractional diffusion,
Indiana University Mathematics Journal
Vol. 61, No. 2, 557--584, 2012

\bibitem{Sil12b} L. Silvestre. 
H\"{o}lder estimates for advection fractional-diffusion equations, 
Annali della Scuola Normale Superiore di Pisa. Classe di Scienze, 2012

\bibitem{Yang} Y. Yang. 
Gradient Estimates for the Equation $\Delta u + c u^{-\alpha}=0$ on Riemannian Manifolds,
Acta Mathematica Sinica, Vol. 26,1177--1182,  2010

\bibitem{XY13} X. Yu. 
Liouville type theorems for integral equations and integral systems, 
Calc. Var. Partial Differential Equations 46, 75--95, 2013


\end{thebibliography}

\end{document}